\documentclass[journal,9p]{IEEEtran}
\usepackage{amsmath,amssymb,amsfonts}
\usepackage{algorithm}
\usepackage{graphicx}
\usepackage{textcomp}
\usepackage{color}
\usepackage{tikz}

\usetikzlibrary{
    arrows.meta,
    positioning,
    calc,
    fit,
    backgrounds
}
\usepackage{xcolor}
\usepackage{mathtools}
\usepackage[font=small,labelfont=bf]{caption}
\usepackage{subcaption}
\usepackage{booktabs}
\usepackage{hyperref}
\usepackage{mathtools}
\usepackage{subcaption} 
\usepackage[
  backend=biber,
  bibstyle=ieee,         
  citestyle=numeric-comp,
  sorting=none, 
  doi=false,     
  isbn=false,    
  url=false,     
  eprint=false   
]{biblatex}
\usepackage[backend=biber]{biblatex} 
\usepackage{balance}
\usepackage{xcolor}
\usepackage[table]{xcolor}
\usepackage{colortbl}
\usepackage{hyperref}
\hypersetup{
  colorlinks,
  linkcolor=purple,
  citecolor=blue,
  urlcolor=green
}

\newtheorem{lem}{\textbf{Lemma}}

\newtheorem{definition}{\textbf{Definition}}
\newtheorem{proposition}{\textbf{Proposition}}
\newtheorem{theorem}{\textbf{Theorem}}

\usepackage{algpseudocode}
\usepackage{subcaption}

\ifCLASSINFOpdf
\else
\fi
\begin{document}
%
\title{KKT Residual Sliding Manifolds: A Projection-Free  Structured Finite-Time Solver for Nonlinear Constrained Optimisation.}
%
%
%

\author{Abdelhafid Zenati$^{1,2}$, Kamal Youcef-Toumi$^2$
\thanks{ $^1$ Engineering Department, School of Science and Technology (SST), City University of London, Northampton Square,  
		London, EC1V 0HB, United Kingdom.
        (e-mail: abdelhafid.zenati@city.ac.uk).}
\thanks{ $^2$ Department of Mechanical Engineering, Massachusetts Insti
tute of Technology, Cambridge, Massachusetts 02139, United
States. (e-mail: \{youcef, abdelzen\}@mit.edu)}
\thanks{Manuscript received April 19, 2005; revised August 26, 2015.}}

%
%

\markboth{Journal of IEEE Transactions on Automatic Control 2026}%
{Shell \MakeLowercase{\textit{et al.}}: Bare Demo of IEEEtran.cls for IEEE Journals}
%



\maketitle


\begin{abstract}
Nonlinear Constrained Optimisation (NCO) requires the simultaneous satisfaction of the Karush–Kuhn–Tucker (KKT) conditions. Existing methods often enforce these conditions indirectly using projections, penalties, barriers, or sequential approximations. These mechanisms may introduce computational complexity, ill-conditioning, and convergence difficulties. Moreover, for nonconvex problems, a stationary KKT point is not necessarily a genuine local optimum. This paper introduces a novel formulation that recasts NCO as the stabilisation of a unified KKT residual sliding manifold and constructs the SM–KKT solver, a projection-free finite-time optimisation framework. First, stationarity, primal feasibility, and a regularised Fischer–Burmeister complementarity residual are integrated into a unified residual dynamics, with the complementarity component simultaneously encoding inequality feasibility, dual feasibility, and complementary slackness. The zero set of the resulting residual sliding manifold is then shown to be equivalent to the KKT conditions. Second, the local structure of the residual manifold is characterised through the inertia of its Jacobian. A fundamental result establishes that the required saddle-point inertia is equivalent to the simultaneous satisfaction of the Linear Independence Constraint Qualification (LICQ) and the Second-Order Sufficient Condition (SOSC). This inertia property is then exploited in the structured correction algorithm, whose local stability is guaranteed when these conditions hold. Consequently, the framework distinguishes genuine, nondegenerate, locally isolated optimal KKT equilibria from stationary KKT points that fail the required second-order optimality conditions. Based on this formulation, SM–KKTconstructs structured primal–dual corrections and employs a discrete sliding reaching law to regulate the complete KKT residual, guaranteeing finite-time reaching of the prescribed KKT residual boundary layer. Numerical experiments on four representative nonlinear benchmark problems demonstrate recovery of the reference constrained optima with KKT residuals below $(10^{-6})$ in all cases, while reaching the prescribed residual boundary layer earlier and with greater KKT accuracy than the projected primal–dual, augmented-Lagrangian, quadratic-penalty, and log-barrier baselines.
\end{abstract}

\begin{IEEEkeywords}
Nonlinear Constrained Optimisation (NCO), Karush--Kuhn--Tucker (KKT) Conditions, Sliding-Mode Optimisation, KKT Residual Manifold, Fischer--Burmeister Function, Matrix Inertia.
\end{IEEEkeywords}

%
\IEEEpeerreviewmaketitle

\section{Introduction}

\IEEEPARstart{N}{onlinear} constrained optimisation problems arise in numerous engineering and scientific applications, including optimal control \cite{cntr}, machine learning \cite{ML}, resource allocation \cite{realloca}, and robotics and autonomous systems \cite{robotics}. A standard characterisation of candidate solutions is provided by the Karush--Kuhn--Tucker (KKT) conditions, which combine stationarity, primal feasibility, dual feasibility, and complementary slackness into a unified optimality system \cite{kkt, jahn2007introduction}. Although KKT theory is central to nonlinear programming, designing algorithms that reliably compute KKT points remains challenging, particularly when nonlinear equality constraints, active inequalities, complementarity conditions, and non-convex objective landscapes occur simultaneously \cite{kktcon,syed2026non}.

This paper adopts a control-inspired viewpoint in which the KKT conditions are regarded as defining a residual sliding manifold in the primal--dual space. From this perspective, constrained optimisation becomes a manifold-reaching problem: rather than enforcing stationarity, feasibility, and complementarity through separate algorithmic mechanisms, the optimisation dynamics directly regulate the complete KKT residual. This interpretation naturally connects NCO with sliding-mode control, where trajectories are driven towards a prescribed sliding surface and maintained near it despite nonlinearities and perturbations.

A key difficulty in such a residual-manifold formulation is the treatment of complementarity. Complementarity is nonsmooth by nature and is commonly handled through projection, active-set identification, barrier functions, or penalty terms. In contrast, this work employs a regularised Fischer--Burmeister (FB) residual to embed inequality feasibility, dual feasibility, and complementary slackness directly into the KKT residual vector. The FB residual is therefore not used merely as a complementarity reformulation; it becomes a structural component of the sliding manifold. Moreover, when the original inequality constraints are convex, the corresponding regularised FB complementarity residual preserves convexity. This property is important because it allows complementarity to be incorporated without destroying the geometric structure of the original convex optimisation problem.

Building on this residual formulation, the proposed method develops a structured correction mechanism based on the block structure of the KKT residual Jacobian. The correction exploits the coupling between stationarity, feasibility, and complementarity, and the FB block contributes a residual-dependent curvature term in directions associated with the inequality constraints. A central theoretical result shows that, at a regular KKT point, the residual Jacobian has inertia \((n,m+q,0)\) if and only if the Linear Independence Constraint Qualification (LICQ) and the Second-Order Sufficient Condition (SOSC) hold. This links the geometry of the proposed residual sliding manifold directly to classical second-order optimality theory and ensures that the targeted zero-residual manifold corresponds to a genuine locally optimal KKT point rather than an arbitrary stationary residual equilibrium.

Also, the proposed algorithm imposes a discrete sliding reaching law directly on the KKT residual. Consequently, stationarity, primal feasibility, dual feasibility, and complementarity are driven simultaneously towards zero through a single residual dynamics. The resulting solver is projection-free, does not rely on penalty or barrier continuation, and provides an explicit mechanism for finite-time reaching of the prescribed residual boundary layer and accurate regulation of the complete KKT residual. In the convex case, where the objective and inequality constraints are convex and the equality constraints are affine, driving the residual manifold to zero implies convergence to a globally optimal solution.

The principal contributions of this paper are summarised as follows:
\begin{itemize}
\item A projection-free KKT sliding residual surface formulation based on a regularised FB complementarity mapping, which embeds stationarity, feasibility, dual feasibility, and complementary slackness into a single smooth residual system.

\item A structured primal--dual correction mechanism that exploits the block structure of the residual Jacobian and avoids multiplier projection, penalty-based feasibility restoration, and barrier continuation.

\item An inertia characterisation proving that the residual Jacobian has inertia \((n,m+q,0)\) if and only if LICQ and SOSC hold at a regular KKT point, thereby linking residual-manifold geometry with second-order optimality.

\item A discrete sliding-mode residual solver that treats constrained optimisation as a manifold-reaching problem, providing finite-time convergence to the boundary layer and accurate regulation of the complete KKT residual.

\item A convexity-preserving analysis of the regularised FB complementarity residual, supporting global optimality guarantees for convex optimisation problems under the stated convergence assumptions.

\item Numerical validation on nonlinear constrained benchmark problems, including non-convex objectives, Rosenbrock-type valleys, annular feasible regions, and nonlinear equality and inequality constraints. Comparisons with projected primal--dual, augmented Lagrangian, quadratic penalty, and log-barrier methods demonstrate high KKT accuracy, accurate recovery of constrained optima, and efficient optimisation trajectories.
\end{itemize}

The remainder of the paper is organised as follows. Section~\ref{sec:related_work} reviews the relevant literature on constrained optimisation methods. Section~\ref{sec:problem_setting} formulates the optimisation problem and develops the unified KKT residual representation. The theoretical properties of the proposed residual manifold are established in Section~\ref{sec:algorithm_analysis}. Section~\ref{sec:proposed_algorithm} presents the projection-free sliding optimisation algorithm together with the structured correction strategy. Numerical results and comparative studies are reported in Section~\ref{sec:numerical_validation}. Finally, concluding remarks and directions for future research are provided in Section~\ref{sec:conclusion}.

\section{Related Work}
\label{sec:related_work}

Several classes of algorithms have been developed for NCO, each enforcing the Karush--Kuhn--Tucker (KKT) conditions through a different numerical mechanism \cite{conco}. Although these methods have achieved significant theoretical and practical success, most of them regulate stationarity, feasibility, dual feasibility, and complementarity indirectly through projection, penalty terms, barrier functions, or sequential approximation schemes. This section reviews the principal families of constrained optimisation methods and identifies the gap addressed by the proposed SM--KKT framework.

\subsection{Projection-Based Primal--Dual Methods}

Projected primal--dual methods update the primal and dual variables through descent--ascent dynamics while enforcing non-negativity of the inequality multipliers through projection onto the non-negative orthant \cite{primal,primal2}. These methods are attractive because of their conceptual simplicity, low per-iteration cost, and suitability for large-scale or distributed optimisation settings. Projection also provides a direct mechanism for maintaining dual feasibility. However, projection-based schemes regulate the KKT conditions only partially. While the projection step enforces multiplier non-negativity, stationarity, primal feasibility, and complementarity are not controlled as a single coupled residual system \cite{zhu2018projected}. As a result, convergence near active constraints may be slow, and oscillatory behaviour can occur when the active set changes repeatedly \cite{stegink2018convergence}. In contrast, the proposed SM--KKT method does not rely on projection to enforce complementarity. Instead, dual feasibility, inequality feasibility, and complementary slackness are embedded directly into the FB residual and regulated together with stationarity and equality feasibility through a unified KKT residual dynamics, thereby avoiding projection-induced discontinuities.

\subsection{Penalty and Augmented Lagrangian Methods}

Penalty methods reformulate the constrained optimisation problem by adding constraint-violation terms to the objective function \cite{ref1}. Their main advantage is implementation simplicity, since the original constrained problem is replaced by a sequence of unconstrained or weakly constrained subproblems. However, accurate recovery of the original constrained solution generally requires large penalty parameters. This may lead to ill-conditioning, slow convergence, and sensitivity to parameter selection \cite{ref2}. Moreover, a reduction in the penalised objective does not necessarily imply satisfaction of the original KKT conditions, since the algorithm may converge to points that remain infeasible for the original problem \cite{chen2016penalty}.

Augmented Lagrangian methods improve upon pure penalty methods by combining multiplier updates with quadratic penalty terms \cite{ref6}. This often yields stronger practical performance, since optimality and feasibility are handled through both multiplier estimation and penalty correction. Nevertheless, the convergence behaviour remains dependent on the interaction between penalty parameters, multiplier updates, and subproblem accuracy. Poor parameter choices may lead to slow convergence, oscillatory multiplier behaviour, or excessive penalty growth \cite{birgin2025global}.

The proposed SM--KKT framework differs fundamentally from penalty and augmented Lagrangian methods in that it does not modify the objective function through penalty terms. Instead, the complete KKT residual is treated as the primary quantity to be regulated. Consequently, feasibility and complementarity are enforced directly through the residual sliding manifold, eliminating penalty continuation, reducing ill-conditioning associated with large penalty parameters, and avoiding repeated penalty and multiplier tuning. Furthermore, because the original optimisation problem is preserved, the proposed framework directly regulates the true KKT conditions rather than the optimality conditions of a penalised surrogate problem.

\subsection{Barrier and Interior-Point Methods}

Barrier and interior-point methods enforce inequality feasibility by constructing smooth approximations that keep the iterates within the interior of the feasible region \cite{ref3,ref4}. These methods possess strong theoretical foundations and form the basis of many successful optimisation solvers. Their effectiveness is closely related to the central-path interpretation, where the solution of the original problem is approached by solving a sequence of barrier subproblems. Despite their success, barrier-based approaches require a continuation strategy for the barrier parameter and may need strictly feasible initial points \cite{ref5}. As the barrier parameter decreases, the associated linear systems can become increasingly ill-conditioned, especially near active constraints. Furthermore, inequality handling is achieved by remaining inside the feasible region, rather than by directly enforcing complementarity as part of the KKT residual \cite{wright1998ill}.

By contrast, the proposed SM--KKT framework incorporates complementarity directly into the residual sliding system through a regularised Fischer--Burmeister mapping. Consequently, it eliminates barrier continuation and central-path tracking, avoids the need for strictly feasible iterates, and reduces the ill-conditioning associated with vanishing barrier parameters. Instead, the complete KKT residual is regulated directly, enabling accurate enforcement of stationarity, feasibility, and complementarity within a unified projection-free framework.

\subsection{Sequential Quadratic Programming}

Sequential Quadratic Programming (SQP) methods are among the most powerful approaches for smooth NCO \cite{ref8}. They approximate the nonlinear problem by a sequence of quadratic programming subproblems constructed from local models of the objective and constraints. Under suitable regularity and second-order conditions, SQP methods can achieve fast local convergence and high solution accuracy. However, this performance is obtained at the cost of solving a constrained quadratic subproblem at each iteration \cite{solodov2004sequential}. The method also depends on reliable Hessian or quasi-Newton approximations, suitable merit or filter mechanisms, and effective line-search or trust-region strategies \cite{shen2016stabilized, murray1997some}. In non-convex settings, additional safeguards are often needed to handle indefiniteness, inconsistent linearisations, or convergence to non-minimising stationary points \cite{andrei2022sequential}. 

The proposed SM--KKT framework does not solve a quadratic programming subproblem at every iteration. Instead, it computes structured primal--dual corrections directly from the KKT residual Jacobian and embeds them within a sliding residual dynamics. Consequently, the computational overhead associated with repeated QP solves, Hessian approximations, and line-search or trust-region strategies is avoided. Moreover, the residual Jacobian is exploited not merely as a local linearisation, but as a tool for inertia analysis, establishing a direct connection between the residual manifold, LICQ, SOSC, and the local optimality of the limiting solution.

\subsection{Fischer--Burmeister and Complementarity Reformulations}

Complementarity reformulations based on nonlinear complementarity functions, including the FB function, provide an alternative to active-set and projection-based treatments of inequality constraints \cite{ref9}. Their main advantage is that inequality feasibility, multiplier non-negativity, and complementary slackness can be represented through a single residual equation. This makes complementarity functions particularly suitable for residual-based formulations and semismooth Newton-type methods \cite{jiang1998global, de2010exact}. In the present work, however, the FB residual is used in a different way. It is not introduced only to solve a complementarity equation. Instead, it becomes a structural component of the proposed KKT sliding manifold. This allows stationarity, equality feasibility, inequality feasibility, dual feasibility, and complementary slackness to be regulated simultaneously through one residual dynamics. Moreover, the regularised FB residual used in this paper preserves convexity of the complementarity residual whenever the original inequality constraints are convex. This structure-preserving property is important because it allows complementarity to be embedded into the residual manifold without destroying the geometry of the original convex optimisation problem.
\begin{figure}[!t]
\centering

\begin{tikzpicture}[
    >=Latex,
    font=\scriptsize,
    flow/.style={
        -{Latex[length=1.4mm]},
        semithick
    },
    box/.style={
        draw,
        rounded corners=2pt,
        align=center,
        inner xsep=2pt,
        inner ysep=3pt
    }
]


\node[
    box,
    draw=red!55!black,
    fill=red!5,
    text width=0.80\columnwidth
] (problem) {
    \textbf{Constrained Optimisation Problem}
    \\[1mm]
    $\displaystyle
    \min_x f(x)
    \qquad
    \mathrm{s.t.}\qquad
    g(x)\leq0,\quad h(x)=0
    $
};


\node[
    box,
    draw=orange!70!black,
    fill=orange!8,
    below=14mm of problem,
    text width=0.235\columnwidth,
    minimum height=17mm,
    xshift=-0.28\columnwidth
] (sx) {
    \textbf{Stationarity}
    \\[1mm]
    $\mathcal M_x:S^x(z)=0$
    \\[1mm]
    Eq.~\eqref{eq:discrete_Sx}
};

\node[
    box,
    draw=purple!40!black,
    fill=yellow!8,
    below=14mm of problem,
    text width=0.235\columnwidth,
    minimum height=17mm
] (sc) {
    \textbf{Complementarity}
    \\[1mm]
    $\mathcal M_c:S^c(z)=0$
    \\[1mm]
    Eq.~\eqref{eq:discrete_Sc}
};

\node[
    box,
    draw=magenta!55!black,
    fill=magenta!5,
    below=14mm of problem,
    text width=0.235\columnwidth,
    minimum height=17mm,
    xshift=0.28\columnwidth
] (sh) {
    \textbf{Feasibility}
    \\[1mm]
    $\mathcal M_h:S^h(z)=0$
    \\[1mm]
    Eq.~\eqref{eq:discrete_Sh}
};


\coordinate (branch)
    at ($(problem.south)+(0,-6mm)$);

\draw[semithick]
    (problem.south) -- (branch);

\draw[flow]
    (branch) -| (sx.north);

\draw[flow]
    (branch) -- (sc.north);

\draw[flow]
    (branch) -| (sh.north);

\node[
    font=\tiny\bfseries,
    text=blue!70!black,
    fill=white,
    align=center,
    inner xsep=0mm,
    inner ysep=0.7mm,
    anchor=west
] at ([xshift=-8mm,yshift=1mm]branch)
{KKT Sliding Residual\\ Formulation};


\node[
    draw=red!55!black,
    dashed,
    rounded corners=5pt,
    line width=1pt,
    inner xsep=3mm,
    inner ysep=3mm,
    fit=(problem)(sx)(sc)(sh),
    label={
        [font=\bfseries\footnotesize,
         text=red!55!black,
         fill=white]
        above:Sliding Problem Formulation
    }
] (stageone) {};


\node[
    box,
    draw=cyan!60!black,
    fill=cyan!6,
    below=11mm of stageone.south,
    text width=0.80\columnwidth
] (residual) {
    \textbf{Unified KKT Residual}
    \\[1mm]
    $\displaystyle
    S_k=
    \begin{bmatrix}
    S_k^x & S_k^c & S_k^h
    \end{bmatrix}^{\!\top}
    \qquad
    \text{Eq.~\eqref{eq:discrete_residual}}
    $
};


\node[
    box,
    draw=orange!70!black,
    fill=yellow!10,
    below=5mm of residual,
    text width=0.80\columnwidth
] (reach) {
    \textbf{Sliding Reaching Law}
    \\[1mm]
    $\displaystyle
    r_k=
    -\left[
    QS_k+
    \mathcal K
    \operatorname{sat}
    \left(\frac{S_k}{\phi}\right)
    \right]
    \qquad
    \text{Eq.~\eqref{eq:desired_residual}}
    $
};

\draw[flow]
(residual) -- (reach);


\node[
    box,
    draw=green!55!black,
    fill=green!7,
    below=5mm of reach,
    text width=0.80\columnwidth
] (correction) {
    \textbf{Structured Primal--Dual Correction}
    \\[1mm]
    $\displaystyle
    J_k=
    \left.
    \frac{\partial S}{\partial z}
    \right|_{z=z_k},
    \quad
    J_k\Delta z_k=r_k
    $
    \\[1mm]
    $\displaystyle
    \Delta z_k=
    \begin{bmatrix}
    \Delta x_k&
    \Delta\lambda_k&
    \Delta\mu_k
    \end{bmatrix}^\top
    \quad
    \text{Eqs.~\eqref{eq:linear_residual_system}--%
    \eqref{eq:block_jacobian_discrete}}
    $
};

\draw[flow]
(reach) -- (correction);


\node[
    box,
    draw=purple!60!black,
    fill=purple!6,
    below=5mm of correction,
    text width=0.80\columnwidth
] (update) {
    \textbf{Discrete Primal--Dual Update}
    \\[1mm]
    $\displaystyle
    z_{k+1}=z_k+\eta\Delta z_k
    \qquad
    \text{Eq.~\eqref{eq:discrete_algorithm}}
    $
};

\draw[flow]
(correction) -- (update);


\node[
    draw=green!55!black,
    dashed,
    rounded corners=4pt,
    line width=0.75pt,
    inner xsep=3mm,
    inner ysep=3mm,
    fit=(residual)(reach)(correction)(update),
    label={
 [font=\bfseries\footnotesize,
     text=green!50!black,
     fill=white,
     inner xsep=3mm,
     inner ysep=1mm]
    above=0pt:Sliding Solution Algorithm
    }
] (stagetwo) {};

\begin{scope}[on background layer]
\draw[flow]
(stageone.south)
--
++(0,-5mm)
--
(residual.north);
\end{scope}


\node[
    box,
    draw=blue!60!black,
    fill=blue!6,
    below=11mm of stagetwo.south,
    text width=0.34\columnwidth,
    minimum height=10mm
] (test) {
    $\displaystyle
    \|S_{k+1}\|
    \leq
    \varepsilon_{\mathrm{tol}}
    $?
};


\node[
    box,
    draw=orange!70!black,
    fill=orange!8,
    left=3mm of test,
    text width=0.17\columnwidth,
    minimum height=10mm
] (repeat) {
    $k\leftarrow k+1$
    \\
    \textbf{Repeat}
};


\node[
    box,
    draw=green!55!black,
    fill=green!8,
    right=3mm of test,
    text width=0.20\columnwidth,
    minimum height=10mm
] (solution) {
    \textbf{Converged}
    \\
    KKT solution
};

\draw[flow]
(test.west)
--
node[above,font=\tiny]{No}
(repeat.east);

\draw[flow]
(test.east)
--
node[above,font=\tiny]{Yes}
(solution.west);


\node[
    draw=blue!55!black,
    dashed,
    rounded corners=4pt,
    line width=0.65pt,
    inner xsep=3mm,
    inner ysep=5mm,
    fit=(repeat)(test)(solution),
    label={
        [font=\bfseries\footnotesize,
         text=blue!55!black,
         fill=white]
        above:Convergence Criterion
    }
] (stagethree) {};

\begin{scope}[on background layer]
\draw[flow]
(update.south)
--
(test.north);
\end{scope}
\end{tikzpicture}

\caption{Conceptual architecture of the proposed SM--KKT framework.
The sliding problem formulation expresses the constrained optimisation
problem through the stationarity, regularised Fischer--Burmeister
complementarity, and equality-feasibility manifolds. The sliding solution
algorithm assembles these conditions into the unified KKT residual $S_k$,
applies the sliding reaching law, computes the structured primal--dual
correction, and updates the primal--dual variables. Convergence is declared
when $\|S_{k+1}\|\leq\varepsilon_{\mathrm{tol}}$; otherwise, the iteration
continues until the prescribed KKT tolerance is reached.}
\label{fig:smkkt_framework}

\end{figure}
\section{Problem Setting}
\label{sec:problem_setting}

This work considers a class of NCO problems involving both inequality and equality constraints. Let \(x\in\mathbb{R}^{n}\) denote the primal decision variable. The optimisation problem is written as
\begin{equation}
\label{eq:problem}
\begin{aligned}
\min_{x\in\mathbb{R}^{n}} \quad & f(x)\\
\mathrm{s.t.}\quad 
& g(x)\leq 0,\\
& h(x)=0,
\end{aligned}
\end{equation}
where \(f:\mathbb{R}^{n}\rightarrow\mathbb{R}\) is the objective function, \(g:\mathbb{R}^{n}\rightarrow\mathbb{R}^{m}\) denotes the vector of inequality constraints, and \(h:\mathbb{R}^{n}\rightarrow\mathbb{R}^{q}\) denotes the vector of equality constraints. The inequality \(g(x)\leq 0\) is interpreted componentwise, namely
\begin{equation}
g_i(x)\leq 0,\qquad i=1,\ldots,m.
\end{equation}
Throughout this paper, it is assumed that \(f\), \(g\), and \(h\) are continuously differentiable on an open set containing the feasible region. The feasible set associated with \eqref{eq:problem} is
\begin{equation}
\label{eq:feasible_set}
\mathcal{F}
=
\left\{
x\in\mathbb{R}^{n}
:~~
g(x)\leq 0,\;~
h(x)=0
\right\}.
\end{equation}
The inequality constraints restrict the solution to lie on or inside admissible regions, whereas the equality constraints define manifolds on which the solution must lie exactly. In this sense, the equality constraints are not one-sided restrictions; deviations on either side of \(h(x)=0\) are infeasible.

\subsection{Karush–Kuhn–Tucker (KKT) Conditions}
The Lagrangian associated with \eqref{eq:problem} is defined by
\begin{equation}
\label{eq:lagrangian}
\mathcal{L}(x,\lambda,\mu)
=
f(x)+\lambda^{\top}g(x)+\mu^{\top}h(x),
\end{equation}
where \(\lambda\in\mathbb{R}^{m}_{+}\) is the vector of Lagrange multipliers associated with the inequality constraints and \(\mu\in\mathbb{R}^{q}\) is the vector of Lagrange multipliers associated with the equality constraints. The inequality multipliers must satisfy
\begin{equation}
\lambda\geq 0,
\end{equation}
whereas the equality multipliers are free in sign:
\begin{equation}
\mu\in\mathbb{R}^{q}.
\end{equation}

A point \(x^\star\in\mathcal{F}\), together with multipliers \(\lambda^\star\) and \(\mu^\star\), satisfies the KKT conditions if (see \cite{kktcon,voloshinov2018generalization})
\begin{subequations}
\label{eq:kkt}
\begin{align}
\nabla_x\mathcal{L}(x^\star,\lambda^\star,\mu^\star)&=0,
\label{eq:kkt_stationarity}\\
g(x^\star)&\leq 0,
\label{eq:kkt_ineq_feas}\\
h(x^\star)&=0,
\label{eq:kkt_eq_feas}\\
\lambda^\star&\geq 0,
\label{eq:kkt_dual_feas}\\
\lambda^\star\odot g(x^\star)&=0.
\label{eq:kkt_complementarity}
\end{align}
\end{subequations}
Here, \(\odot\) denotes componentwise multiplication. Condition \eqref{eq:kkt_stationarity} is the stationarity condition:

\begin{equation}
\nabla f(x^\star)
+
J_g(x^\star)^\top \lambda^\star
+
J_h(x^\star)^\top\mu^\star
=
0.
\end{equation}
where $J_g\in\mathbb{R}^{m\times n}$ and $J_h\in\mathbb{R}^{q\times n}$ are the Jacobians of constraints $g$ and $h$ respectively, defined as
\begin{equation} \label{jacg}
J_g(x^\star)
=
\begin{bmatrix}
\nabla g_1(x^\star)^\top\\
\nabla g_2(x^\star)^\top\\
\vdots\\
\nabla g_m(x^\star)^\top
\end{bmatrix},~~J_h(x^\star)
=
\begin{bmatrix}
\nabla h_1(x^\star)^\top\\
\nabla h_2(x^\star)^\top\\
\vdots\\
\nabla h_q(x^\star)^\top
\end{bmatrix}.
\end{equation}

The conditions \eqref{eq:kkt_ineq_feas} and \eqref{eq:kkt_eq_feas} are the primal feasibility conditions, where \eqref{eq:kkt_dual_feas} is the dual feasibility condition, and  \eqref{eq:kkt_complementarity} is the complementary slackness condition.
\subsection{ Second-order Condition }
In addition to the first-order KKT conditions, second-order optimality conditions are used to determine whether a KKT point is a constrained local minimiser \cite{kien2018second, meng2015first}. Let $H^\star
=
\nabla_{xx}^{2}\mathcal{L}(x^\star,\lambda^\star,\mu^\star)
$, then
\begin{equation}
\label{hes}
H^\star=
\nabla^{2}f(x^\star)
+
\sum_{i=1}^{m}\lambda_i^\star\nabla^{2}g_i(x^\star)
+
\sum_{j=1}^{q}\mu_j^\star\nabla^{2}h_j(x^\star).
\end{equation}
Let \(\mathcal A(x^\star)=\{i:g_i(x^\star)=0\}\) denote the active inequality set and define
\[
C=
\begin{bmatrix}
J_{g_{\mathcal A}}(x^\star)\\
J_h(x^\star)
\end{bmatrix}.
\]
The admissible first-order feasible directions are those in the null space of \(C\). Let \(Z\) be a basis for this null space, so that
\begin{equation}
   CZ=0. 
\end{equation}
The second-order necessary condition requires
\begin{equation}
    Z^\top H^\star Z\succeq0,
\end{equation}
whereas the second-order sufficient condition requires
\begin{equation}
    Z^\top H^\star Z\succ0.
\end{equation}
Thus, the active constraint Jacobian determines the feasible tangent directions, while the reduced Hessian \(Z^\top H^\star Z\) determines the curvature along those directions. This reduced-Hessian condition will be used below to relate classical second-order optimality theory to the inertia of the proposed KKT residual Jacobian.

For convex problems, the second-order sufficient condition is not
required to establish optimality: any point satisfying the KKT
conditions is a global minimiser \cite{meng2015first}. However, SOSC, or an equivalent strict curvature condition, remains necessary when one seeks local isolation \cite{fernandez2012local}, nonsingularity of the residual Jacobian, and
asymptotic stability of a particular KKT point. In the absence of
strict curvature, the solution set may be non-isolated and the
residual Jacobian may be singular, even though every KKT point is
globally optimal.

Before developing the proposed framework, several preliminary concepts are introduced that will be used throughout the theoretical analysis. In particular, the inertia of structured saddle-point matrices plays a central role in establishing the relationship between the residual Jacobian, second-order optimality conditions, and the convergence properties of the proposed algorithm. The following definition and lemma summarise the required results.

\begin{definition}[\color{blue}Matrix Inertia \cite{ref10}] Let $A\in\mathbb{R}^{n\times n}$ be a square matrix whose eigenvalues are counted according to their algebraic multiplicities. The inertia of \(A\), denoted by \[ \operatorname{In}(A), \] is defined as the ordered triple \[ \operatorname{In}(A) = (n_{+},n_{-},n_{0}), \] where \begin{itemize} \item \(n_{+}\) is the number of eigenvalues of \(A\) with positive real part, \item \(n_{-}\) is the number of eigenvalues of \(A\) with negative real part, \item \(n_{0}\) is the number of eigenvalues of \(A\) lying on the imaginary axis. \end{itemize} Thus, \[ n_{+}+n_{-}+n_{0}=n. \] If \(A\) is symmetric, the inertia reduces to the numbers of positive, negative, and zero eigenvalues of \(A\). \end{definition}
\begin{lem}[ \color{blue}Inertia of the Saddle-Point Matrix \cite{ref11}] \label{lem:kkt_inertia} Let \[ K = \begin{bmatrix} G & A^\top\\ A & 0 \end{bmatrix},~~G\in\mathbb{R}^{n\times n}, \quad A\in\mathbb{R}^{m\times n}, \] and assume that \(A\) has full row rank, $\operatorname{rank}(A)=m.$ Let \[ Z\in\mathbb{R}^{n\times(n-m)} \] be a basis for the null space of \(A\), i.e., $AZ=0.$ Then \begin{equation} \label{eq:kkt_inertia} \operatorname{In}(K) = \operatorname{In}(Z^\top GZ) + (m,m,0). \end{equation} Consequently, if the reduced Hessian $G$ satisfies \begin{equation} \label{eq:kkt_inertia_positive} Z^\top GZ \succ 0 \Leftrightarrow \operatorname{In}(K) = (n,m,0). \end{equation} In particular, the matrix \(K\) is nonsingular. \end{lem}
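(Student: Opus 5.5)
The plan is to prove the inertia identity \eqref{eq:kkt_inertia} by a congruence transformation followed by Sylvester's law of inertia. Throughout I take $G$ symmetric, since the lemma is applied to the Lagrangian Hessian $H^\star$. Then $K$ is symmetric and inertia means the counts of positive, negative and zero eigenvalues.

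The first step builds a nonsingular change of basis adapted to $A$. Because $\operatorname{rank}(A)=m$, choose $Y\in\mathbb{R}^{n\times m}$ with $[\,Y\;Z\,]$ nonsingular and $AY$ nonsingular. For instance, take $Y=A^\top$, so that $AY=AA^\top\succ0$. Define
\[
T=\begin{bmatrix} Z & Y & 0\\ 0 & 0 & I_m\end{bmatrix},
\]
which is nonsingular. Using $AZ=0$, a direct computation gives
\[
T^\top K T=\begin{bmatrix} Z^\top G Z & Z^\top G Y & 0\\ Y^\top G Z & Y^\top G Y & (AY)^\top\\ 0 & AY & 0\end{bmatrix}.
\]

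The second step decouples the blocks. The idea is to use the invertible block $B:=AY$ to eliminate the off-diagonal terms by further block congruences, which Sylvester's law permits. The $(3,2)$ and $(2,3)$ blocks let us subtract suitable multiples of the third block row and column and annihilate $Z^\top GY$ and $Y^\top GY$. For example, add $-\tfrac12 B^{-\top}Y^\top G Y$ times the last block to the middle block to remove $Y^\top GY$. Similarly, subtract $B^{-\top}Y^\top G Z$ times the third block row from the second block row (and the transpose on columns) to clear the cross terms. The outcome is congruence to
\[
\begin{bmatrix} Z^\top GZ & 0\\ 0 & \begin{bmatrix}0 & B^\top\\ B & 0\end{bmatrix}\end{bmatrix}.
\]

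The third step computes the inertia of the remaining block. The $2m\times 2m$ matrix $\begin{bmatrix}0&B^\top\\B&0\end{bmatrix}$ has inertia $(m,m,0)$. One way to see this is that its eigenvalues are $\pm\sigma_i(B)$ with $\sigma_i(B)>0$. Alternatively, it is congruent to $\operatorname{diag}(I,-I)$ via $\begin{bmatrix}I&I\\ B^{-\top}&-B^{-\top}\end{bmatrix}$. Sylvester's law then yields $\operatorname{In}(K)=\operatorname{In}(Z^\top GZ)+(m,m,0)$.

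The consequence \eqref{eq:kkt_inertia_positive} follows by reading off the counts. $Z^\top GZ\succ0$ means $\operatorname{In}(Z^\top GZ)=(n-m,0,0)$, which gives $(n,m,0)$. Conversely, $\operatorname{In}(K)=(n,m,0)$ forces $\operatorname{In}(Z^\top GZ)=(n-m,0,0)$. Nonsingularity follows from $n_0=0$.

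The main obstacle I expect is the bookkeeping in the second step: the elimination must be an honest congruence, applied symmetrically to rows and columns. This step relies on $B$ being invertible, which is exactly where full row rank enters. As a cleaner fallback, I would instead cite Haynsworth additivity: take the Schur complement of the nonsingular lower-right $2m\times2m$ block $\begin{bmatrix}Y^\top GY & B^\top\\ B&0\end{bmatrix}$. Its inverse has zero $(1,1)$ block, so the Schur complement reduces to $Z^\top GZ$. That block itself has inertia $(m,m,0)$ by the same argument as in the third step.
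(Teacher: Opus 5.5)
The paper gives no proof of this lemma; it only cites Nocedal--Wright. Your congruence-plus-Sylvester argument is the standard proof of that result and is correct in substance. That includes your decision to take $G$ symmetric, which is not a mere convenience. With the eigenvalue-based inertia of the paper's Definition, the lemma is false for nonsymmetric $G$. Take $n=2$, $m=1$, $A=[1\;\;0]$, $Z=[0\;\;1]^\top$ and $G=\begin{bmatrix}-3&1\\-5&1\end{bmatrix}$. Then $Z^\top GZ=1>0$, yet $\det(sI-K)=s^3+2s^2+s+1$ is Hurwitz, so $\operatorname{In}(K)=(0,3,0)\neq(2,1,0)$.

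One step of your write-up is misdescribed and, as literally stated, would not work. The cross term $Z^\top GY$ sits in the $(1,2)$ and $(2,1)$ block positions of $T^\top KT$. The third block row $[\,0\;\;B\;\;0\,]$ and third block column vanish in the first block position, so adding multiples of the third block to the second block leaves that cross term untouched. Your multiplier $B^{-\top}Y^\top GZ$, which is $m\times(n-m)$, also does not conform as a left factor on the third block row.

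The elimination has to act on the first (the $Z$-) block. Subtract the third block column times $B^{-\top}Y^\top GZ$ from the first block column, and $Z^\top GY B^{-1}$ times the third block row from the first block row. Combined with your (correct) $Y^\top GY$ step, a single congruence does everything:
\[
E=\begin{bmatrix} I&0&0\\ 0&I&0\\ -B^{-\top}Y^\top GZ & -\tfrac12 B^{-\top}Y^\top GY & I\end{bmatrix},
\qquad
E^\top\bigl(T^\top KT\bigr)E=\begin{bmatrix} Z^\top GZ&0&0\\ 0&0&B^\top\\ 0&B&0\end{bmatrix}.
\]
This is exactly the form you claim, so your third step and the consequences go through unchanged.

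In the Haynsworth fallback, the block $\begin{bmatrix}Y^\top GY&B^\top\\ B&0\end{bmatrix}$ is not of the form handled in your third step unless $Y^\top GY=0$. To get inertia $(m,m,0)$ you first need the $-\tfrac12 B^{-\top}Y^\top GY$ congruence. Alternatively, use the homotopy that replaces $Y^\top GY$ by $t\,Y^\top GY$ for $t\in[0,1]$; the matrix stays nonsingular along it, so its inertia cannot change.
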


While the preceding results establish the spectral foundation of the proposed framework, complementarity is incorporated through the regularised Fischer--Burmeister (FB) function. The following lemma introduces the FB function used throughout this paper.
\begin{lem}[\color{blue}Fischer--Burmeister Function \cite{ref9}]
\label{def:FB} The Fischer--Burmeister function is the mapping defined by
\begin{equation}
\label{eq:FB_definition}
\phi_{\mathrm{FB}}:\mathbb{R}^2\rightarrow\mathbb{R}, ~~\phi_{\mathrm{FB}}(a,b)
=
a+b-\sqrt{a^{2}+b^{2}}.
\end{equation}

The Fischer--Burmeister function is a nonlinear complementarity function and satisfies
\[
\phi_{\mathrm{FB}}(a,b)=0
\]
if and only if
\[
a\ge0,
\qquad
b\ge0,
\qquad
ab=0.
\]
\end{lem}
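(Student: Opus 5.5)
We prove both directions of the equivalence in Lemma~\ref{def:FB} directly from the algebraic identity
\begin{equation*}
\phi_{\mathrm{FB}}(a,b)=0 \iff a+b=\sqrt{a^{2}+b^{2}}.
\end{equation*}
The plan is to square this relation, taking care that squaring is only reversible when the left-hand side is non-negative.

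\textbf{Sufficiency.} Suppose $a\ge0$, $b\ge0$ and $ab=0$. Then at least one of $a,b$ is zero; say $b=0$, the other case being symmetric. Then $\sqrt{a^{2}+b^{2}}=|a|=a$, since $a\ge0$. Hence $\phi_{\mathrm{FB}}(a,b)=a+0-a=0$.

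\textbf{Necessity.} Suppose $\phi_{\mathrm{FB}}(a,b)=0$. Then $a+b=\sqrt{a^{2}+b^{2}}\ge0$. Squaring both sides gives
\begin{equation*}
a^{2}+2ab+b^{2}=a^{2}+b^{2},
\end{equation*}
so $ab=0$.

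It remains to show that each variable is non-negative. Since $ab=0$, one of them vanishes; say $b=0$. The relation then reads $a=\sqrt{a^{2}}=|a|$, which forces $a\ge0$. The case $a=0$ symmetrically gives $b\ge0$. Together with $ab=0$ this yields all three conditions.

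\textbf{Main obstacle.} The only delicate point is the squaring step. Squaring loses sign information, so the non-negativity of $a+b$ must be recorded \emph{before} squaring, and the sign of the surviving variable must be recovered afterwards through $a=|a|$. Without this, the equation $a^{2}+2ab+b^{2}=a^{2}+b^{2}$ alone would also admit, for example, $a<0$, $b=0$, where in fact $\phi_{\mathrm{FB}}(a,0)=2a\neq0$. Once the sign is tracked in this way, the argument is elementary and needs no earlier result from the paper.
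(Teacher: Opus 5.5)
Your proof is correct and complete. You record $a+b=\sqrt{a^{2}+b^{2}}\ge0$ before squaring, deduce $ab=0$, and then recover the sign of the surviving variable through $a=|a|$ (or $b=|b|$); that is exactly what is needed, and your sufficiency direction is fine.

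The paper does not prove this lemma at all. It imports the characterisation as a standard property of NCP-functions from \cite{ref9}, so your argument takes a different route only in the sense of being a self-contained derivation of something the paper outsources. The citation buys brevity and places $\phi_{\mathrm{FB}}$ in the established NCP-function literature.

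Your direct argument additionally makes transparent what changes once the paper regularises the function. Repeating your squaring step on $a+b=\sqrt{a^{2}+b^{2}+\rho_c}$ gives $2ab=\rho_c>0$, and together with $a+b>0$ this forces $a>0$ and $b>0$. With $a=\lambda_i$ and $b=-g_i(x)$, the zero set of the regularised residual $S^c$ therefore enforces only the perturbed complementarity $\lambda_i g_i(x)=-\rho_c/2$, never exact complementarity. Consequently the lemma, and hence the equivalence in Proposition~\ref{prop:kkt_equiv_s_zero}, holds exactly only in the limit $\rho_c\to0$, not for the $\rho_c>0$ residual the paper actually uses.
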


In this work, FB function is not merely a complementarity reformulation, as will be shown later. It is essential because it enables the complementarity conditions to be incorporated directly into the residual sliding manifold. Rather than treating complementarity as a separate algebraic condition, the FB function converts it into a residual equation that becomes an integral part of the sliding manifold. Consequently, stationarity, feasibility, and complementarity are driven simultaneously towards zero through a unified residual dynamics. A second advantage is that the FB residual produces a favourable Jacobian structure. In the proposed framework, the Jacobian of the residual system is directly related to the inertia and second-order properties of the optimisation problem. This connection allows the convergence analysis in noncovex problem to be linked to second-order optimality conditions and provides a mechanism for distinguishing convergence to a local minimiser from convergence to just  stationary or saddle point.

Furthermore, the FB formulation preserves important structural properties of the original optimisation problem. In particular, when the constraint function $g(x)$ is convex, the corresponding regularised FB complementarity residual remains convex. Unlike many complementarity reformulations that may alter the geometry of the feasible set, the proposed residual preserves the convex structure of the inequality constraints while embedding complementarity directly into the sliding manifold. As a result and under the assumptions of convex objective and constraint functions together with affine equality constraints, the condition $S(z)=0$ is equivalent to satisfaction of the KKT conditions. Since the KKT conditions are necessary and sufficient for optimality in convex optimisation, driving the sliding manifold to zero implies convergence to the globally optimal solution rather than merely convergence to a stationary residual equilibrium.

\section{Theoretical Foundations of the KKT Sliding Manifold Framework}
\label{sec:algorithm_analysis}
The core idea of this paper is to construct a projection-free dynamical optimisation scheme based on a sliding-mode residual approach, whose convergence is assessed directly through the residuals associated with the Karush--Kuhn--Tucker (KKT) conditions in \eqref{eq:kkt}. Rather than treating the KKT conditions as separate algebraic constraints, the proposed formulation gathers stationarity, equality feasibility, inequality feasibility, dual feasibility, and complementary slackness into a unified residual system. This residual system is interpreted as a sliding manifold, and the algorithm is designed to drive the trajectory towards this manifold through a signed primal--dual reaching law.

\subsection{Sliding Manifold Building}
An elegant component of the proposed formulation is the use of a regularised Fischer--Burmeister complementarity surface , see Lemma \ref{def:FB}. This surface encodes the inequality-related KKT conditions in a single residual block, namely
\[
\lambda\geq0,\qquad g(x)\leq0,\qquad \lambda\odot g(x)=0.
\]
Thus, inequality feasibility, dual feasibility, and complementary slackness are enforced simultaneously without requiring an explicit projection of the multipliers onto the non-negative orthant and allows the algorithm to operate directly in the primal--dual variables $z=[x~
\lambda~
\mu]^T$. More precisely, the proposed residual surface is constructed as
\begin{equation}
    S(z)
=
\begin{bmatrix}
S^x(z)&
S^c(z)&
S^h(z)
\end{bmatrix}^T,
\end{equation}

where \(S^x\) represents stationarity, \(S^h\) represents equality feasibility, and \(S^c\) is a regularised nonlinear complementarity residual based on the FB surface. We will prove later that the zero set of \(S\) is therefore equivalent to the KKT set of the original constrained optimisation problem. Consequently, convergence of the sliding residual to zero implies simultaneous satisfaction of stationarity, equality feasibility, inequality feasibility, dual feasibility, and complementary slackness.

The stationarity residual is defined as
\begin{equation}
\label{eq:Sx}
S^x(z)
=
\nabla_x\mathcal{L}(x,\lambda,\mu).
\end{equation}

To encode the inequality-related KKT conditions, we introduce a nonlinear complementarity residual based on a regularised FB surface. For each inequality constraint, define
\begin{equation}
\label{eq:Sc}
S^c(z)
= -\phi_{\mathrm{FB}}(\lambda,-g(x))=
-\lambda
+
g(x)
+
\sqrt{\lambda^2+g(x)^2+\rho_c},
\end{equation}
where \(\rho_c>0\) is a small regularisation parameter and all operations are performed componentwise. In the limiting case \(\rho_c\rightarrow0\), the condition
\begin{equation}
    S^c(z)=0 \Longleftrightarrow \lambda\ge0,
~
g(x)\le0,
~
\lambda\odot g(x)=0.
\end{equation}

Thus, inequality feasibility, dual feasibility, and complementary slackness are encoded simultaneously through a single residual block. The equality-feasibility residual is defined as
\begin{equation}
\label{eq:Sh}
S^h(z)
=
h(x).
\end{equation}

The complete KKT/sliding residual is therefore given by
\begin{equation}
\label{eq:full_residual}
S(z)
=
\begin{bmatrix}
S^x(z)\\
S^c(z)\\
S^h(z)
\end{bmatrix}
=
\begin{bmatrix}
\nabla_x\mathcal{L}(x,\lambda,\mu)\\
-\phi_{\mathrm{FB}}(\lambda,-g(x))\\
h(x)
\end{bmatrix}.
\end{equation}

The proposed optimisation dynamics are designed to drive the residual surface \eqref{eq:full_residual} to zero. This is significant because the components of \(S\) correspond directly to the KKT conditions: \(S^x\) represents stationarity, \(S^c\) represents the inequality-related KKT conditions through the FB complementarity surface, and \(S^h\) represents equality feasibility. The role of \eqref{eq:full_residual} can be understood as follows. If
\begin{equation}
S(z^\star)=0,
\end{equation}
then, by construction,
\begin{equation}
\nabla_x\mathcal{L}(x^\star,\lambda^\star,\mu^\star)=0,
\end{equation}
\begin{equation}
-\phi_{\mathrm{FB}}(\lambda,-g(x))=0,
\end{equation}
and
\begin{equation}
h(x^\star)=0.
\end{equation}
In the limiting complementarity case \(\rho_c\rightarrow0\), the condition \(S^c(z^\star)=0\) implies
\begin{equation}
\lambda^\star\ge0,
\qquad
g(x^\star)\le0,
\qquad
\lambda^\star\odot g(x^\star)=0.
\end{equation}
Therefore, stationarity, primal feasibility, dual feasibility, complementary slackness, and equality feasibility all hold simultaneously. Hence, the full KKT conditions are satisfied.

This formulation provides the basis for the sliding-mode-inspired algorithm developed in this work. The main idea is to design signed primal--dual update laws that force the trajectory towards the residual manifold \(S=0\). Equality constraints are incorporated directly through \(h(x)\), while the inequality-related KKT conditions are encoded through the FB residual \(S^c\). The resulting method is projection-free because multiplier non-negativity and complementarity are enforced through the complementarity residual rather than through explicit projection.

\subsection{Advantages of the Residual Sliding Manifold Formulation}
\label{sec:residual_manifold}

The selection of the residual manifold
\begin{equation}\label{resi}
   \mathcal M = \{z=
\begin{bmatrix}
x&
\lambda&
\mu
\end{bmatrix}^T:~~~S(z)=0\} 
\end{equation}

plays a central role in the proposed approach. Unlike penalty,
barrier, or merit-function formulations, which replace the
original constrained optimisation problem by an auxiliary
objective, the proposed residual is constructed directly from the
first-order optimality conditions. Consequently, no additional
approximations or relaxation terms are introduced, making the
resulting formulation one of the least conservative descriptions
of the constrained optimisation problem.

The residual vector combines the stationarity, complementarity,
and feasibility conditions into a single manifold \eqref{eq:full_residual}. As will be shown later in
Proposition~\ref{prop:kkt_equiv_s_zero}, the manifold $S(z)=0$
is equivalent to the KKT conditions. Therefore, driving the
residual to zero is not merely a surrogate objective but directly
enforces the optimality conditions of the original constrained
optimisation problem. Consequently, if an update law can be
constructed such that 
\begin{equation}
    S_k\rightarrow0,
\end{equation}
then the iterates converge to a KKT point of the original
problem. The usefulness of this formulation extends beyond first-order
optimality. The Jacobian of the residual mapping possesses a
special structured form inherited from the KKT conditions
themselves. As shown in the subsequent analysis, the Jacobian
evaluated at a regular solution satisfies the inertia property
\begin{equation} \label{inertia}
   \operatorname{In}(J^\star) =(n,m+p,0). 
\end{equation}
This inertia structure implies that the corresponding KKT point
is non-degenerate and locally isolated. Moreover, it is
equivalent to the second-order sufficient optimality condition
under the standard constraint qualification assumptions.
Consequently, $S(z^\star)=0$ together with the inertia condition identifies a genuine local minimum rather than a saddle point or a degenerate stationary solution. In this sense, the residual manifold not only encodes the first-order optimality conditions but also reveals the local second-order geometry of the optimisation landscape.

Another important advantage of the proposed residual formulation
is computational. The particular choice of the stationarity,
Fischer--Burmeister, and feasibility residuals yields the
structured Jacobian
\begin{equation}\label{j}
    J
=\dfrac{\partial S}{\partial z}=
\begin{bmatrix}
H & J_g^\top & J_h^\top
\\
D^gJ_g & D^\lambda & 0
\\
J_h & 0 & 0
\end{bmatrix}.
\end{equation}
where 
\begin{equation}\label{Dg}
    D^g
=
\operatorname{diag}
\left(
1+
\frac{g_i(x)}
{\sqrt{\lambda_{i}^{2}+g_i(x)^2+\rho_c}}
\right),
\end{equation} 
\begin{equation}\label{Dlam}
   D^\lambda
=
\operatorname{diag}
\left(
-1+
\frac{\lambda_{i}}
{\sqrt{\lambda_{i}^{2}+g_i(x)^2+\rho_c}}
\right). 
\end{equation}
The complementarity block contributes the diagonal matrix
\(D_\lambda\), which can be eliminated analytically. This permits
the original system of dimension \((n+m+p)\) to be reduced to a
saddle-point system of dimension \((n+p)\). As a consequence, the
correction direction can be computed without explicitly forming
the inverse of the full residual Jacobian. The resulting solver
preserves the exact Newton-type correction associated with the linearised residual equation while substantially reducing the computational burden.

An additional advantage of the proposed residual sliding formulation is that the Fischer--Burmeister block introduces a stabilising curvature correction in the reduced Hessian. Indeed, at iteration $k$, we have
\begin{equation}
   W_k=-(D_k^\lambda)^{-1}D_k^g\succ0, 
\end{equation}

the reduced curvature matrix can be written as
\begin{equation}\label{reduced}
\bar H_k
=
H_k
+
J_g(x_k)^\top W_kJ_g(x_k). 
\end{equation}
Therefore, the inequality residual contributes the positive-semidefinite term
\[
J_g(x_k)^\top W_kJ_g(x_k)\succeq0.
\]
This term acts as a curvature regularisation induced by the residual manifold itself. Consequently, the sliding formulation not only drives the trajectory toward the KKT manifold but also improves the conditioning of the structured correction system by reinforcing curvature in directions visible through the inequality Jacobian.

Moreover, The Fischer--Burmeister residual introduces a curvature term that is closely related to the Hessian structure appearing in logarithmic barrier methods. Near the residual manifold, where \(S^c_{i,k}\approx0\), one has
\[
W_{i,k}
=
\frac{S^c_{i,k}+\lambda_{i,k}}
{S^c_{i,k}-g_i(x_k)}
\approx
-\frac{\lambda_{i,k}}{g_i(x_k)}.
\]
Hence,
\[
J_g(x_k)^\top W_kJ_g(x_k)
\approx
\sum_{i=1}^{m}
\left(
-\frac{\lambda_{i,k}}{g_i(x_k)}
\right)
\nabla g_i(x_k)\nabla g_i(x_k)^\top .
\]

This term has the same rank-one curvature structure as the logarithmic barrier Hessian. Indeed, for the barrier function
\[
B_i(x)=-\log(-g_i(x)),
\qquad g_i(x)<0,
\]
Thus, the Fischer--Burmeister correction induces a barrier-like curvature regularisation in the reduced Hessian. This explains why \eqref{reduced} may remain better conditioned even when the multiplier variables have not yet reached their limiting KKT values. In this sense, the residual sliding manifold combines complementarity enforcement with a barrier-type curvature effect, without explicitly introducing a logarithmic barrier objective.

Furthermore, the proposed FB formulation preserves the convex structure of the original optimisation problem. As established in the following lemma, convex inequality constraints yield a convex regularised FB complementarity residual. Consequently, for convex optimisation problems with affine equality constraints, \(S(z)=0\) is equivalent to the KKT conditions and therefore guarantees convergence to the globally optimal solution.
\begin{lem}[\color{blue}Convexity of the FB Residual]
\label{lem:convexity_sc}
Consider
\[
S_i^c(x,\lambda_i)
=
-\lambda_i
+
g_i(x)
+
\sqrt{\lambda_i^2+g_i(x)^2+\rho_c},~
\rho_c>0.
\]
If \(g_i\) is twice continuously differentiable and convex, then
\(S_i^c(x,\lambda_i)\) is convex in \((x,\lambda_i)\).
\end{lem}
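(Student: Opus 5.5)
The plan is to write $S_i^c$ as a composition $S_i^c(x,\lambda_i)=\psi\bigl(\lambda_i,g_i(x)\bigr)$ with the outer function
\[
\psi(\lambda,t)=-\lambda+t+\sqrt{\lambda^2+t^2+\rho_c},\qquad (\lambda,t)\in\mathbb{R}^2,
\]
and then apply the standard composition rule. That rule says that a jointly convex outer function which is nondecreasing in the argument receiving a convex inner map yields a convex composition. The argument $\lambda_i$ enters affinely, and $g_i$ is convex by hypothesis. So two facts about $\psi$ suffice: joint convexity, and monotonicity in $t$.

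\textbf{Convexity of $\psi$.} Observe that
\[
\sqrt{\lambda^2+t^2+\rho_c}=\bigl\|(\lambda,t,\sqrt{\rho_c})\bigr\|_2 .
\]
This is the Euclidean norm composed with an affine map of $(\lambda,t)$, so it is convex. Adding the linear term $-\lambda+t$ preserves convexity, hence $\psi$ is jointly convex on $\mathbb{R}^2$.

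\textbf{Monotonicity of $\psi$ in $t$.} Compute
\[
\partial_t\psi=1+\frac{t}{\sqrt{\lambda^2+t^2+\rho_c}}.
\]
Since $\rho_c>0$, we have $|t|<\sqrt{\lambda^2+t^2+\rho_c}$, so $\partial_t\psi>0$ everywhere. This is exactly the diagonal entry of $D^g$ in \eqref{Dg}, so $\psi$ is strictly increasing in $t$.

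\textbf{Composition.} Take two points $(x,\lambda_i),(y,\nu_i)$ and $\theta\in[0,1]$. Convexity of $g_i$ gives
\[
g_i(\theta x+(1-\theta)y)\le \theta g_i(x)+(1-\theta)g_i(y).
\]
Monotonicity of $\psi$ in $t$ lets us replace the left-hand side inside $\psi$ by the right-hand side, giving an upper bound. Joint convexity of $\psi$ then bounds that quantity by
\[
\theta\,\psi\bigl(\lambda_i,g_i(x)\bigr)+(1-\theta)\,\psi\bigl(\nu_i,g_i(y)\bigr).
\]
This is the convexity inequality for $S_i^c$ in $(x,\lambda_i)$.

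\textbf{Cross-check via the Hessian.} Since $g_i\in C^2$ is assumed, one can also verify convexity directly. The Hessian of $S_i^c$ is
\[
\nabla^2 S_i^c=\partial_t\psi\,\begin{bmatrix}\nabla^2 g_i&0\\0&0\end{bmatrix}+B^\top\nabla^2\psi\,B,\qquad B=\begin{bmatrix}0&1\\ \nabla g_i^\top&0\end{bmatrix}.
\]
The first term is PSD because $\partial_t\psi>0$ and $\nabla^2g_i\succeq0$. The second term is PSD because $\nabla^2\psi\succeq0$, which follows from the norm representation above.

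The only step I expect to need care is the monotonicity. If the sign of $g_i$ inside $\psi$ were reversed, or if $\rho_c=0$, then $\partial_t\psi$ could vanish at the origin and the argument would need a limiting step. Here $\rho_c>0$ guarantees strict positivity, so no difficulty is expected.
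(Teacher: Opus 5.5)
Your proof is correct, and it takes a different route from the paper. The paper works entirely at second order. It writes out the $(n+1)\times(n+1)$ Hessian of $S_i^c$ in $(x,\lambda_i)$ and notes that the $\lambda_i\lambda_i$ entry $(g_i^2+\rho_c)/R_i^3$ is positive. Taking the Schur complement with respect to that entry leaves $D_i^g\nabla^2 g_i+\frac{\rho_c}{R_i(g_i^2+\rho_c)}\nabla g_i\nabla g_i^\top$, a sum of two positive semidefinite terms.

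You instead factor $S_i^c=\psi(\lambda_i,g_i(x))$ and use only the definition of convexity and the vector composition rule. The norm representation $\|(\lambda,t,\sqrt{\rho_c})\|_2$ gives joint convexity of $\psi$ with no computation. This buys generality. Second derivatives are never used, so the argument holds for any convex $g_i$, whether or not it is $C^2$ or even differentiable. It also needs only that $\psi$ be nondecreasing in $t$. That remains true when $\rho_c=0$: for $\lambda\neq0$ the $t$-derivative is still positive, and $\psi(0,t)=t+|t|$. So the limiting step you anticipate in your last paragraph is not needed.

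The paper's explicit computation, in turn, makes the curvature quantitative. It exhibits the strictly positive coefficient $\rho_c/(R_i(g_i^2+\rho_c))$ and the same factor $D_i^g$ that appears in the residual Jacobian. Your Hessian cross-check sits between the two approaches. Expanding $\partial_t\psi\,\mathrm{diag}(\nabla^2 g_i,0)+B^\top\nabla^2\psi\,B$ reproduces the paper's Hessian blocks exactly. The paper's Schur-complement coefficient is precisely the Schur complement of $\nabla^2\psi$ with respect to its $\lambda\lambda$ entry. In effect, the paper verifies $\nabla^2\psi\succeq0$ by hand, whereas you obtain it from the norm representation.
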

\begin{IEEEproof}
    The proof is provided in Appendix~\ref{appendix1}.
\end{IEEEproof}
\subsection{Equivalence Between the KKT Residual System and the KKT Conditions}
The following proposition  establishes the relationship between the proposed residual system and the KKT conditions  of the original constrained optimisation problem. The objective is to show that the zeros of the residual mapping are in one-to-one correspondence with the KKT points. Consequently, solving the nonlinear system \eqref{resi} is equivalent to satisfying the first-order necessary optimality conditions. The analysis first establishes the equivalence between the residual equation \(S(z)=0\) and the KKT conditions in \eqref{eq:kkt}. 
\begin{proposition}[\color{blue}Sliding Manifolds and KKT Conditions]\label{prop:kkt_equiv_s_zero}Let \(f\), \(g\), and \(h\) be continuously differentiable. Then
\[
S(z^\star)=0
\]
if and only if $z^\star=
(x^\star,\lambda^\star,\mu^\star)^T$
satisfies the Karush--Kuhn--Tucker conditions of
\eqref{eq:kkt}.

\end{proposition}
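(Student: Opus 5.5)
The plan is to prove the equivalence block by block, using the stacked structure of $S$ in \eqref{eq:full_residual}. Because $S(z^\star)=0$ holds exactly when each of its three blocks vanishes, the proof reduces to three separate equivalences. First, $S^x(z^\star)=0$ is by definition \eqref{eq:Sx} the stationarity condition \eqref{eq:kkt_stationarity}. Second, $S^h(z^\star)=0$ is by \eqref{eq:Sh} the equality feasibility condition \eqref{eq:kkt_eq_feas}. Both directions of these two equivalences are immediate and need only the continuous differentiability of $f,g,h$, which makes $\nabla_x\mathcal L$ well defined. All the content therefore lies in the complementarity block, and I will treat it componentwise, since $S^c$ is defined componentwise in \eqref{eq:Sc}.

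For the complementarity block I will set $a=\lambda_i^\star$ and $b=-g_i(x^\star)$, so that $S^c_i=-\phi_{\mathrm{FB}}(a,b)$ once the regularisation is removed. Lemma~\ref{def:FB} then gives $S^c_i=0$ if and only if $\lambda_i^\star\ge0$, $g_i(x^\star)\le0$ and $\lambda_i^\star g_i(x^\star)=0$. Collecting these over $i=1,\dots,m$ yields exactly \eqref{eq:kkt_ineq_feas}, \eqref{eq:kkt_dual_feas} and \eqref{eq:kkt_complementarity}. Combining this with the first paragraph gives both directions: a zero of $S$ is a KKT point, and any KKT triple, substituted into $S$, annihilates every block.

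The main obstacle is the regularisation parameter. With $\rho_c>0$ the equivalence fails literally. Solving $\sqrt{\lambda_i^2+g_i^2+\rho_c}=\lambda_i-g_i$ requires $\lambda_i-g_i>0$, and squaring gives
\begin{equation*}
\lambda_i\,g_i(x)=-\tfrac{\rho_c}{2}.
\end{equation*}
Hence $S^c_i=0$ forces $\lambda_i>0$ and $g_i<0$ with a perturbed, central-path-like complementarity, rather than the exact condition \eqref{eq:kkt_complementarity}. I would therefore state the proposition for the limiting residual with $\rho_c=0$, in which case the equivalence is exact by Lemma~\ref{def:FB}, consistent with the paper's ``limiting case'' remarks.

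For $\rho_c>0$ I would add a short stability argument. The displayed relation, together with $\lambda_i>0$ and $g_i<0$, characterises the zeros of $S$ exactly. As $\rho_c\to0$, these zeros satisfy the KKT system up to an $O(\rho_c)$ complementarity error. By continuity of $S$ in $(z,\rho_c)$, any accumulation point of such zeros is a genuine KKT point. Conversely, near a KKT point satisfying the nondegeneracy hypotheses used later (LICQ, SOSC, and strict complementarity if needed), the implicit function theorem produces nearby zeros of the regularised residual. I expect only this regularised direction to need care; the rest is bookkeeping.
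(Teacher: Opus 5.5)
Your block-by-block argument (the $S^x$ and $S^h$ blocks are literally the stationarity and equality-feasibility conditions, and the complementarity block is handled componentwise through Lemma~\ref{def:FB} with $a=\lambda_i^\star$, $b=-g_i(x^\star)$) is exactly the proof given in Appendix~\ref{appendix2}. Your caveat on the regularisation is also correct and sharper than the paper: for $\rho_c>0$ one gets $S^c_i=0$ if and only if $\lambda_i>0$, $g_i<0$, $\lambda_i g_i=-\rho_c/2$, and in fact $S^c_i>0$ at every KKT point, so for $m\ge 1$ the zero set of $S$ and the KKT set are disjoint; the paper's proof therefore only covers the $\rho_c=0$ case, since it silently replaces $S^c$ by the unregularised $-\phi_{\mathrm{FB}}(\lambda,-g(x))$, which is the restriction you propose to make explicit.
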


\begin{IEEEproof}
See the proof in Appendix \ref{appendix2}.
\end{IEEEproof}

Proposition~\ref{prop:kkt_equiv_s_zero} shows that the zero-residual manifold in \eqref{resi} coincides exactly with the KKT set of the original constrained optimisation problem. Consequently, the proposed algorithm can be interpreted as a residual-driven dynamical system whose objective is to steer the trajectory towards the manifold \eqref{resi}. Therefore,
\[
S(z_k)\rightarrow0
\]
is equivalent to convergence towards the KKT set, providing the theoretical justification for using the residual system as the sliding manifold.

\subsection{Residual Jacobian Inertia as a Local Optimality Certificate}
\label{sec:inertia_optimality}
The folowing theorem studies the Jacobian of the residual mapping at a regular KKT point. Under LICQ and SOSC, the Jacobian possesses the inertia property \eqref{inertia} which implies local nonsingularity and strong regularity of the KKT solution. 
\begin{theorem}[\color{blue} Jacobian Inertia and Local Optimality]
\label{thm:inertia_characterisation}
Let $z^*$ be a KKT point of problem \eqref{eq:problem}. Partition the inequality constraints into the strongly active and inactive sets
\[
\mathcal A
=
\{i:\;g_i(x^\star)=0,\ \lambda_i^\star>0\},
~
\mathcal I
=
\{i:\;g_i(x^\star)<0,\ \lambda_i^\star=0\}.
\]
Let
\[
r_A=|\mathcal A|,
\qquad
r_I=|\mathcal I|,
\qquad
m=r_A+r_I.
\]
Define
\[
H^\star
=
\nabla_{xx}^{2}
L(x^\star,\lambda^\star,\mu^\star),
~~
C
=
\begin{bmatrix}
J_A(x^\star)&
J_h(x^\star)
\end{bmatrix}^T,
\]
where \(J_A(x^\star)\) is the Jacobian of the strongly active inequality constraints and \(J_h(x^\star)\) is the Jacobian of the equality constraints. Let \(Z\) be a basis matrix for the null space of \(C\),
\[
CZ=0.
\]

Then the following statements are equivalent:

\begin{enumerate}
\item
The residual Jacobian evaluated at the KKT point satisfies
\[
\operatorname{In}(J^\star)
=
(n,m+q,0).
\]

\item
The KKT point satisfies LICQ,
\[
\operatorname{rank}(C)=r_A+q,
\]
and the second-order sufficient condition,
\[
Z^\top H^\star Z\succ0.
\]
\end{enumerate}

Consequently, the inertia condition $\operatorname{In}(J^\star)=(n,m+q,0)$ is equivalent to regular local optimality: the KKT point is nondegenerate, locally isolated, and satisfies the second-order sufficient optimality condition.
\end{theorem}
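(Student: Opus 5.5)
The plan is to compute $J^\star$ explicitly at the KKT point and show that, after a symmetric permutation of rows and columns, it is block triangular. One diagonal block will be the saddle-point matrix of Lemma~\ref{lem:kkt_inertia}, built from $H^\star$ and $C$; the other will be a negative identity. Both implications then reduce to that lemma and to counting eigenvalues. Throughout, I read the diagonal matrices \eqref{Dg}--\eqref{Dlam} at $z^\star$ in the limiting complementarity regime $\rho_c\to0$. This is the regime in which Proposition~\ref{prop:kkt_equiv_s_zero} identifies $S(z^\star)=0$ with the KKT system. For small $\rho_c>0$ the inertia carries over by continuity of eigenvalues, since the limiting matrix will have no eigenvalues on the imaginary axis.

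\textbf{Step 1 (evaluating the FB blocks).} The partition in the theorem assumes strict complementarity, so every inequality index lies in $\mathcal A$ or in $\mathcal I$.
\begin{itemize}
\item For $i\in\mathcal A$ we have $g_i=0$ and $\lambda_i^\star>0$. Hence $\sqrt{\lambda_i^{\star2}+g_i^2}=\lambda_i^\star$, which gives $D^g_{ii}=1$ and $D^\lambda_{ii}=0$.
\item For $i\in\mathcal I$ we have $\lambda_i^\star=0$ and $g_i<0$. Hence $\sqrt{g_i^2}=-g_i$, which gives $D^g_{ii}=0$ and $D^\lambda_{ii}=-1$.
\end{itemize}
Consequently, the inactive complementarity rows of \eqref{j} reduce to $[\,0\;\;-e_i^\top\;\;0\,]$. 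The active rows reduce to $[\,\nabla g_i^\top\;\;0\;\;0\,]$.

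\textbf{Step 2 (block triangular form).} Order the unknowns as $(x,\lambda_{\mathcal A},\mu,\lambda_{\mathcal I})$ and apply the same permutation $P$ to rows and columns. This gives
\[
PJ^\star P^\top=\begin{bmatrix} K & B\\ 0 & -I_{r_I}\end{bmatrix},\qquad K=\begin{bmatrix} H^\star & C^\top\\ C & 0\end{bmatrix},
\]
where $B$ collects the columns $J_{\mathcal I}^\top$ sitting in the stationarity rows. Permutation similarity preserves eigenvalues with their algebraic multiplicities. The characteristic polynomial of a block triangular matrix factorises over its diagonal blocks. Therefore
\[
\operatorname{In}(J^\star)=\operatorname{In}(K)+(0,r_I,0).
\]
The point of this step is that no symmetry of $J^\star$ is required: the lower-left zero block comes directly from Step~1.

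\textbf{Step 3 ((2)$\Rightarrow$(1)).} Assume LICQ, so $C\in\mathbb R^{(r_A+q)\times n}$ has full row rank. Lemma~\ref{lem:kkt_inertia} then gives
\[
\operatorname{In}(K)=\operatorname{In}(Z^\top H^\star Z)+(r_A+q,\,r_A+q,\,0).
\]
Under SOSC the first term equals $(n-r_A-q,0,0)$, so $\operatorname{In}(K)=(n,r_A+q,0)$. By Step~2, $\operatorname{In}(J^\star)=(n,r_A+q+r_I,0)=(n,m+q,0)$.

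\textbf{Step 4 ((1)$\Rightarrow$(2)).} If $\operatorname{In}(J^\star)=(n,m+q,0)$, Step~2 forces $\operatorname{In}(K)=(n,r_A+q,0)$. In particular $K$ is nonsingular.
\begin{itemize}
\item \emph{LICQ.} If $C$ were rank deficient, there would be $w\neq0$ with $C^\top w=0$. Then $K(0,w)=0$, contradicting nonsingularity. Hence LICQ holds.
\item \emph{SOSC.} With LICQ established, Lemma~\ref{lem:kkt_inertia} applies again. Subtracting $(r_A+q,r_A+q,0)$ gives $\operatorname{In}(Z^\top H^\star Z)=(n-r_A-q,0,0)$, which is exactly SOSC.
\end{itemize}

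\textbf{Step 5 (the closing statement).} Nonsingularity of $J^\star$ together with the inverse function theorem shows that $z^\star$ is a locally isolated zero of $S$. By Proposition~\ref{prop:kkt_equiv_s_zero}, it is therefore an isolated KKT point. SOSC then gives a strict local minimiser.

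\textbf{Main obstacle.} The delicate step is Step~1 together with the role of $\rho_c$. With $\rho_c>0$ fixed, $D^\lambda_{ii}$ is not exactly zero on $\mathcal A$, and $D^g_{ii}$ is not exactly zero on $\mathcal I$. Moreover, $S^c(z^\star)\neq0$ for the exact KKT point. To handle this, I would first prove the result for the limiting matrix. I would then note that its spectrum avoids the imaginary axis, which is exactly what statement (1) asserts. Continuity of eigenvalues in $\rho_c$ then preserves the inertia for all sufficiently small $\rho_c$.
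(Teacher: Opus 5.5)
Your proposal is correct and follows the paper's route: you evaluate the Fischer--Burmeister blocks in the $\rho_c\to0$ limit, permute so the inactive multipliers split off as a $-I_{r_I}$ block, and apply Lemma~\ref{lem:kkt_inertia} to the saddle-point matrix $K$. It is in fact more careful than Appendix~\ref{appendix3} in two places: that appendix drops the $J_{\mathcal I}(x^\star)^\top$ block from the stationarity rows and calls the reordered matrix block diagonal, and it reads LICQ off a lemma that already presupposes full row rank, whereas your block-triangular factorisation and your derivation of LICQ from nonsingularity of $K$ actually close those two steps; the only caveat is that your continuity remark transfers just (2)$\Rightarrow$(1) to small $\rho_c>0$, since at fixed $\rho_c>0$ the entries $D^g_{ii}>0$, $i\in\mathcal I$, can turn a zero eigenvalue of the limiting matrix into a positive one (e.g.\ $\min -x_1$ s.t.\ $x_1\le0$, $x_2\le1$ at $x^\star=0$, $\lambda^\star=(1,0)$ gives $\operatorname{In}(J^\star)=(2,2,0)$ although SOSC fails), so the equivalence must be read in the limiting regime, as the paper implicitly does.
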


\begin{IEEEproof}  The proof is provided in Appendix~\ref{appendix3}.
\end{IEEEproof}
Proposition~\ref{prop:kkt_equiv_s_zero} and Theorem~\ref{thm:inertia_characterisation} jointly transform the constrained optimisation problem into a residual regulation problem. The former establishes that \(S(z)=0\) is equivalent to satisfaction of the KKT conditions, while the latter demonstrates that the inertia signature
\[
\operatorname{In}(J^\star)=(n,m+q,0)
\]
characterises regular local minimisers through LICQ and SOSC. Therefore, the construction of a stable update law that asymptotically drives \(S(z)\) to zero is sufficient to recover the desired solution. Any asymptotically stable equilibrium satisfying the above inertia condition necessarily corresponds to a nondegenerate and locally isolated local minimiser, thereby excluding degenerate stationary points and saddle-type KKT solutions.

\section{Projection-Free Sliding Solver for KKT Residual Systems (SM--KKT)}
\label{sec:proposed_algorithm}
This section presents the proposed projection-free sliding solver for the constrained optimisation problem introduced in Section~\ref{sec:problem_setting}. The central idea is to view constrained optimisation as a residual-manifold regulation problem. Rather than enforcing stationarity, feasibility, and complementarity through separate algorithmic mechanisms, the proposed approach constructs a unified KKT residual surface and drives the primal--dual trajectory toward its zero set. Since the residual vanishes if and only if the KKT conditions are satisfied, convergence of the residual dynamics implies convergence to the optimality manifold.  The proposed framework is inspired by sliding-mode control, where the objective is to drive the system trajectory toward a prescribed manifold and maintain it there despite modelling uncertainties. Here, the KKT residual manifold plays the role of the sliding surface. A residual reaching law is therefore designed directly in the primal--dual space to force the trajectory toward the zero-residual set while continuously reducing stationarity, feasibility, and complementarity errors.

The proposed method possesses several attractive properties. First, the constrained optimisation problem is reformulated as a unified residual system, enabling convergence to be analysed through residual decay. Second, the Fischer--Burmeister residual eliminates the need for explicit projection operators and yields a projection-free treatment of inequality constraints. Third, the update law combines smooth residual correction with saturated sliding terms, providing robustness while avoiding excessive oscillations near the solution. Finally, the resulting algorithm is computationally inexpensive, requiring only first-order information and avoiding Hessian evaluations, matrix factorisations, barrier schedules, and quadratic programming subproblems. Consequently, the computational cost per iteration remains low and scales favourably for large-scale constrained optimisation problems.

\subsection{Discrete-Time Implementation} \label{implemntation}

For numerical implementation, the optimisation dynamics are formulated directly in discrete time. Let
\[
z_k=
\begin{bmatrix}
x_k&
\lambda_k&
\mu_k
\end{bmatrix}^T,
\]
where \(x_k\in\mathbb{R}^{n}\) denotes the primal variable, \(\lambda_k\in\mathbb{R}^{m}\) the inequality multiplier, and \(\mu_k\in\mathbb{R}^{p}\) the equality multiplier.

The stationarity residual is computed as
\begin{equation}
\label{eq:discrete_Sx}
S_{k}^x
=
\nabla_x\mathcal{L}(x_k,\lambda_k,\mu_k).
\end{equation}

The inequality-related KKT conditions are represented through the regularised Fischer--Burmeister residual
\begin{equation}
\label{eq:discrete_Sc}
S_{k}^c
=
-\lambda_k
+
g(x_k)
+
\sqrt{\lambda_k^2+g(x_k)^2+\rho_c},
\end{equation}
where \(\rho_c>0\) is a small regularisation parameter and all operations are performed componentwise.

The equality-feasibility residual is
\begin{equation}
\label{eq:discrete_Sh}
S_{k}^h
=
h(x_k).
\end{equation}

The complete residual vector is therefore
\begin{equation}
\label{eq:discrete_residual}
S_k
=
\begin{bmatrix}
S_{k}^x&
S_{k}^c&
S_{k}^h
\end{bmatrix}^T.
\end{equation}

Let
\[
J_k
=
\frac{\partial S(z_k)}{\partial z}
\]
denote the Jacobian of the residual mapping evaluated at the current iterate. The desired residual variation is prescribed through the sliding reaching law
\begin{equation}
\label{eq:desired_residual}
r_k
=
-
\left[
Q S_k
+
\mathcal K
\operatorname{sat}
\!\left(
\frac{S_k}{\phi}
\right)
\right],
\end{equation}
where $r_k=
\begin{bmatrix}
r_k^x&
r_k^c&
r_k^h
\end{bmatrix}^T$ and 
\[
Q
=
\operatorname{diag}
(q_xI_n,q_cI_m,q_hI_p),~\mathcal K
=
\operatorname{diag}
(\varepsilon_xI_n,
\varepsilon_cI_m,
\varepsilon_hI_p),
\]
with positive gains
\[
q_x,q_c,q_h>0,
\qquad
\varepsilon_x,\varepsilon_c,\varepsilon_h>0,
\]
and boundary-layer parameters
\[
\phi_x,~\phi_c,~\phi_h>0.
\]

\begin{algorithm}[t]
\caption{Projection-Free Sliding KKT Residual Solver}
\label{alg:main_sliding_kkt}
\begin{algorithmic}[1]
\Require \(z_0=(x_0,\lambda_0,\mu_0)\), \(Q\succ0\), \(\mathcal K\succ0\), \(\phi>0\), \(\eta>0\), \(\rho_c>0\), \(\varepsilon_{\rm tol}>0\).
\Ensure Approximate KKT point \(z_k\).
\For{\(k=0,1,2,\ldots\)}
\State Evaluate \(S_k\) using \eqref{eq:discrete_residual}.
\If{\(\|S_k\|<\varepsilon_{\rm tol}\)}
    \State \Return \(z_k\).
\EndIf
\State Define
\[
r_k=
-\left[
QS_k+
\mathcal K\operatorname{sat}\left(\frac{S_k}{\phi}\right)
\right].
\]
\State Compute
\[
\Delta z_k=\textsc{StructuredCorrection}(z_k,r_k).
\]
\State Update
\[
z_{k+1}=z_k+\eta\Delta z_k.
\]
\EndFor
\end{algorithmic}
\end{algorithm}
Using the first-order taylor approximation
\begin{equation}
\label{eq:first_order_residual}
S(z_k+\Delta z_k)
=
S(z_k)
+
J_k\Delta z_k
+
\mathcal O(\|\Delta z_k\|^2),
\end{equation}
the correction direction is chosen so that the residual follows the prescribed sliding dynamics. This leads to the linearised residual equation
\begin{equation}
\label{eq:linear_residual_system}
J_k\Delta z_k
=
r_k, ~~~\Delta z_k
=
\begin{bmatrix}
\Delta x_k&
\Delta\lambda_k&
\Delta\mu_k
\end{bmatrix}^T.
\end{equation}

Direct inversion of \(J_k\) is avoided. Instead, the special
block structure of the residual Jacobian is exploited. This
structure is a direct consequence of the particular residual
construction adopted in this work. The stationarity residual
depends on the Lagrangian gradient, the complementarity
conditions are represented through the regularised
Fischer--Burmeister function, and the equality constraints are
introduced explicitly through the feasibility residual. As a
result, the Jacobian naturally assumes the structured KKT form
\begin{equation}
\label{eq:block_jacobian_discrete}
J_k
=
\begin{bmatrix}
H_k & J_g(x_k)^\top & J_h(x_k)^\top\\
D_k^gJ_g(x_k) & D_k^\lambda & 0\\
J_h(x_k) & 0 & 0
\end{bmatrix},
\end{equation}
where $J_g$ and   $J_h$  are given in \eqref{jacg}, respectively, and 
\begin{equation}
    H_k
=
\nabla_{xx}^{2}\mathcal L(x_k,\lambda_k,\mu_k),
\end{equation}

and
\begin{equation}\label{Dg}
    D_k^g
=
\operatorname{diag}
\left(
1+
\frac{g_i(x_k)}
{\sqrt{\lambda_{i,k}^{2}+g_i(x_k)^2+\rho_c}}
\right),
\end{equation} 
\begin{equation}\label{Dlam}
   D_k^\lambda
=
\operatorname{diag}
\left(
-1+
\frac{\lambda_{i,k}}
{\sqrt{\lambda_{i,k}^{2}+g_i(x_k)^2+\rho_c}}
\right). 
\end{equation}

which consists of a Hessian block, constraint Jacobians, and a
diagonal complementarity block. This structure is particularly
advantageous because the complementarity variables can be
eliminated analytically through block elimination, reducing the
original system of dimension \((n+m+p)\) to a saddle-point
system of dimension \((n+p)\). Consequently, the correction
direction can be computed without forming the inverse of the
full residual Jacobian. The computational cost is therefore
significantly reduced while preserving the exact solution of the
linearised residual equation. Moreover, the resulting reduced
system inherits the classical saddle-point structure of KKT
systems, allowing well-established numerical linear algebra
techniques to be employed efficiently and robustly. Furthermore, the same structure reveals the intrinsic primal--dual geometry of the optimisation problem, which is later used to establish the saddle nature of the residual
Jacobian and the local stability properties of the proposed
algorithm.

Therefore, the complementarity block yields
\begin{equation}
\label{eq:lambda_block_discrete}
D_k^gJ_g(x_k)\Delta x_k
+
D_k^\lambda\Delta\lambda_k
=
r_k^c.
\end{equation}

Assuming \(D_k^\lambda\) is nonsingular,
\begin{equation}
\label{eq:delta_lambda_discrete}
\Delta\lambda_k
=
[D_k^\lambda]^{-1}
\left(
r_k^c
-
D_k^gJ_g(x_k)\Delta x_k
\right).
\end{equation}

Substituting \eqref{eq:delta_lambda_discrete} into the stationarity
block produces the reduced saddle-point system
\begin{equation}
\label{eq:reduced_saddle_discrete}
\begin{bmatrix}
\bar H_k & J_h(x_k)^\top\\
J_h(x_k) & 0
\end{bmatrix}
\begin{bmatrix}
\Delta x_k\\
\Delta\mu_k
\end{bmatrix}
=
\begin{bmatrix}
\bar r_k^x\\
r_k^h
\end{bmatrix},
\end{equation}
where
\begin{equation}
\label{eq:Hbar_discrete}
\bar H_k
=
H_k
-
J_g(x_k)^\top
[D_k^\lambda]^{-1}
D_k^g
J_g(x_k),
\end{equation}
and
\begin{equation}
\label{eq:rbar_discrete}
\bar r_k^x
=
r_k^x
-
J_g(x_k)^\top
[D_k^\lambda]^{-1}
r_k^c.
\end{equation}

After solving \eqref{eq:reduced_saddle_discrete} for
\((\Delta x_k,\Delta\mu_k)\), the inequality multiplier
correction is recovered from \eqref{eq:delta_lambda_discrete}. 

\begin{algorithm}[t]
\caption{\textsc{StructuredCorrection}}
\label{alg:structured_correction}
\begin{algorithmic}[1]
\Require Current point \(z_k=(x_k,\lambda_k,\mu_k)\), desired residual variation \(r_k\).
\Ensure Correction \(\Delta z_k=(\Delta x_k,\Delta\lambda_k,\Delta\mu_k)\).

\State Partition
\[
r_k=
\begin{bmatrix}
r_k^x&
r_k^c&
r_k^h
\end{bmatrix}^T.
\]

\State Evaluate \(J_g(x_k)\), \(J_h(x_k)\), and \(H_k=\nabla^2_{xx}\mathcal L(x_k,\lambda_k,\mu_k)\).

\State Form \(D_k^g\) and \(D_k^\lambda\) using \eqref{Dg} and \eqref{Dlam}.

\State Compute
\[
\bar H_k
=
H_k
-
J_g(x_k)^\top
[D_k^\lambda]^{-1}
D_k^g
J_g(x_k),
\]
\[
\bar r_k^x
=
r_k^x
-
J_g(x_k)^\top
[D_k^\lambda]^{-1}
r_k^c.
\]

\State Solve
\[
\begin{bmatrix}
\bar H_k & J_h(x_k)^\top\\
J_h(x_k) & 0
\end{bmatrix}
\begin{bmatrix}
\Delta x_k\\
\Delta\mu_k
\end{bmatrix}
=
\begin{bmatrix}
\bar r_k^x\\
r_k^h
\end{bmatrix}.
\]

\State Recover
\[
\Delta\lambda_k
=
[D_k^\lambda]^{-1}
\left(
r_k^c
-
D_k^gJ_g(x_k)\Delta x_k
\right).
\]

\State Set
\[
\Delta z_k=
\begin{bmatrix}
\Delta x_k&
\Delta\lambda_k&
\Delta\mu_k
\end{bmatrix}^T.
\]
\State \Return \(\Delta z_k\).
\end{algorithmic}
\end{algorithm}

Finally, the discrete-time optimisation algorithm is therefore
\begin{equation}
\label{eq:discrete_algorithm}
z_{k+1}
=
z_k
+
\eta\,\Delta z_k,
\end{equation}
where \(\eta>0\) is a damping factor. Equivalently,
\begin{align}
x_{k+1}
&=
x_k+\eta\Delta x_k,
\\
\lambda_{k+1}
&=
\lambda_k+\eta\Delta\lambda_k,
\\
\mu_{k+1}
&=
\mu_k+\eta\Delta\mu_k.
\end{align}

The iterations terminate when
\begin{equation}
\label{eq:termination}
\|S_k\|
<
\varepsilon_{\mathrm{tol}},
\end{equation}
for a prescribed tolerance $\varepsilon_{\mathrm{tol}}>0.$
 \subsection{Convergence Analysis of the Proposed Algorithm}
\label{subsec:convergence_analysis}

The proposed optimisation framework transforms the constrained
optimisation problem into a residual regulation problem. By
Proposition~\ref{prop:kkt_equiv_s_zero}, the condition
\[
S(z)=0
\]
is equivalent to satisfaction of the KKT conditions. Furthermore,
Theorem~\ref{thm:inertia_characterisation} establishes that the
inertia condition \eqref{inertia} is equivalent to regular local optimality through LICQ and the second-order sufficient condition. Consequently, the convergence
analysis reduces to studying whether the proposed update law
asymptotically drives the residual trajectory toward the manifold
\(S(z)=0\) while preserving the local saddle geometry induced by
the residual Jacobian.
\begin{theorem}[\color{blue}Boundary-Layer Convergence]
\label{thm:residual_stability} Let \(z^\star=(x^\star,\lambda^\star,\mu^\star)\) be a regular local minimiser satisfying the the Proposition \ref{prop:kkt_equiv_s_zero} and Theorem~\ref{thm:inertia_characterisation}. Assume that \(J(z)\) remains nonsingular along the generated trajectory given by the updated law  and the structured correction in Algorithm \ref{alg:main_sliding_kkt} and Algorithm \ref{alg:structured_correction}, respectivelly.
If, for each residual component remains inside the ball given by:
\begin{equation}
   \delta_i=\eta\varepsilon_i -\bar R_{k}^{i}>0, 
\end{equation}
where \(\bar R_{k}^{i}\) bounds the corresponding Taylor remainder, then every residual component reaches the boundary layer
\[
\mathcal B_\phi
=
\{S:\ |S^{i}|\le\phi_i,\ i=1,\ldots,n+m+p\}
\]
in finite time $N_i$, where 
\begin{equation}
    N_i
\le
\left\lceil
\frac{|S_{0}^{i}|-\phi_i}{\delta_i}
\right\rceil.
\end{equation}

Moreover, inside \(\mathcal B_\phi\), the residual dynamics are locally asymptotically stable if
\[
\rho
\left(
I-\eta(Q+\mathcal K\Phi^{-1})
\right)
<1,
~~
\Phi=\operatorname{diag}(\phi).
\]
For diagonal \(Q\), \(\mathcal K\), and \(\Phi\), this is equivalent to
\begin{equation}
   0<
\eta
\left(
q_i+\frac{\varepsilon_i}{\phi_i}
\right)
<2,
~~
i=1,\ldots,n+m+p. 
\end{equation}
Consequently, the residual converges to an invariant neighbourhood of the origin whose size is determined by the boundary-layer widths and the Taylor approximation error. Since \(J^\star\) is nonsingular, this residual neighbourhood induces a corresponding primal--dual neighbourhood of the nondegenerate locally isolated KKT point \(z^\star\).
\end{theorem}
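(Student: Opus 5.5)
The plan is to derive a closed-form recursion for the residual under the update \eqref{eq:discrete_algorithm} and then study it componentwise. Since $J(z_k)$ is assumed nonsingular along the trajectory, Algorithm~\ref{alg:structured_correction} returns the exact solution of $J_k\Delta z_k=r_k$; block elimination through \eqref{eq:reduced_saddle_discrete} is only an equivalent way of solving \eqref{eq:linear_residual_system}. Substituting $\eta\Delta z_k$ into the Taylor expansion \eqref{eq:first_order_residual} then gives
\begin{equation*}
S_{k+1}=S_k+\eta r_k+R_k
=S_k-\eta\Big[QS_k+\mathcal K\operatorname{sat}\big(S_k/\phi\big)\Big]+R_k ,
\end{equation*}
with $R_k=\mathcal O(\eta^2\|\Delta z_k\|^2)$ and $|R_k^i|\le\bar R_k^i$. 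Because $Q$, $\mathcal K$ and $\Phi$ are diagonal, this recursion decouples except through the remainder. Each residual component therefore obeys a scalar perturbed discrete sliding law:
\begin{equation*}
S_{k+1}^i=(1-\eta q_i)S_k^i-\eta\varepsilon_i\operatorname{sat}(S_k^i/\phi_i)+R_k^i .
\end{equation*}

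\textbf{Reaching phase.} Fix $i$ and suppose $|S_k^i|>\phi_i$, so that $\operatorname{sat}(S_k^i/\phi_i)=\operatorname{sign}(S_k^i)$. Assume $0<\eta q_i\le1$; I expect this is implicitly needed, and it holds under the stated small-gain regime. Then
\begin{equation*}
|S_{k+1}^i|\le(1-\eta q_i)|S_k^i|-\eta\varepsilon_i+\bar R_k^i\le|S_k^i|-\delta_i ,
\end{equation*}
where the sign has been factored out before applying the triangle inequality. This gives a strict decrease by at least $\delta_i>0$ per step. Telescoping yields $|S_k^i|\le|S_0^i|-k\delta_i$ for as long as the component stays outside the layer. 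The component must therefore enter $\{|S^i|\le\phi_i\}$ after at most $\lceil(|S_0^i|-\phi_i)/\delta_i\rceil$ steps, which is the claimed bound on $N_i$.

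\textbf{Boundary-layer phase.} Inside the layer, $\operatorname{sat}(S/\phi)=\Phi^{-1}S$, so the recursion becomes linear up to the remainder:
\begin{equation*}
S_{k+1}=\big(I-\eta(Q+\mathcal K\Phi^{-1})\big)S_k+R_k .
\end{equation*}
The nominal part is asymptotically stable if and only if $\rho(I-\eta(Q+\mathcal K\Phi^{-1}))<1$. Since the matrix is diagonal, this is equivalent to $|1-\eta(q_i+\varepsilon_i/\phi_i)|<1$, i.e.\ $0<\eta(q_i+\varepsilon_i/\phi_i)<2$. Treating $R_k$ as a bounded input, I will use the discrete input-to-state bound
\begin{equation*}
\|S_k\|\le\rho^k\|S_{N}\|+\frac{\sup_k\|R_k\|}{1-\rho}.
\end{equation*}
This shows convergence to an invariant neighbourhood whose size is set by $\phi$ and $\bar R$. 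Invariance of $\mathcal B_\phi$ itself follows from the scalar contraction once $\bar R^i_k\le(1-|1-\eta(q_i+\varepsilon_i/\phi_i)|)\phi_i$. Since the remainder is quadratic in $\Delta z$, which scales with $\|S\|$, the remainder term shrinks with the residual, and this recovers local asymptotic stability of the origin.

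\textbf{Primal--dual conclusion and main obstacle.} By Theorem~\ref{thm:inertia_characterisation}, $J^\star$ is nonsingular, so the inverse function theorem makes $S$ a local diffeomorphism near $z^\star$. This gives $\|z-z^\star\|\le c\|S(z)\|$ on a neighbourhood, and Proposition~\ref{prop:kkt_equiv_s_zero} identifies $z^\star$ as the KKT point. The residual neighbourhood therefore maps to a primal--dual neighbourhood of $z^\star$.

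The main obstacle is controlling $\bar R_k^i$ uniformly, which requires bounding $\|\Delta z_k\|\le\|J_k^{-1}\|\,\|r_k\|$ along the trajectory. I would assume a uniform bound $\|J(z)^{-1}\|\le\beta$ on a sublevel set containing the iterates, together with Lipschitz continuity of $J$. I would then verify $\delta_i>0$ by choosing $\eta$ small, noting that the remainder scales like $\eta^2$ while the decrease term scales like $\eta$.
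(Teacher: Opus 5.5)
Your plan follows the paper's proof step for step: Taylor expansion to $S_{k+1}=S_k+\eta r_k+R_k$, componentwise sign/saturation analysis, the linear recursion in the layer, the diagonal Schur test, and the inverse function theorem. It also inherits the paper's gap in the reaching phase.

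Write $S^i_{k+1}=\sigma_k a_k+R^i_k$ with $\sigma_k=\operatorname{sgn}(S^i_k)$ and $a_k=(1-\eta q_i)|S^i_k|-\eta\varepsilon_i$. The triangle inequality gives only $|S^i_{k+1}|\le|a_k|+\bar R^i_k$. Replacing $|a_k|$ by $a_k$ requires $a_k\ge0$, i.e.\ that the step does not overshoot zero. Your extra hypothesis $0<\eta q_i\le1$ gives $(1-\eta q_i)|S^i_k|\ge0$, not $\ge\eta\varepsilon_i$. At $\eta q_i=1$ your right-hand side equals $-\delta_i<0$, which no absolute value can satisfy.

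The reaching claim itself is false under your hypothesis and $\delta_i>0$ alone. Take an equality-constrained QP: $S$ is affine, so $R_k\equiv0$, and $J$ is constant and nonsingular under LICQ/SOSC. Choose $\eta q_i=1$ and $\eta\varepsilon_i>\phi_i$. Then any component with $|S^i_0|>\phi_i$ obeys $S^i_{k+1}=-\eta\varepsilon_i\operatorname{sgn}(S^i_k)$. It chatters with amplitude $\eta\varepsilon_i>\phi_i$ forever, although $\delta_i=\eta\varepsilon_i>0$.

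The paper makes the identical move: it bounds $\operatorname{sgn}(S^i_k)S^i_{k+1}$ and then reads that bound as one on $|S^i_{k+1}|$. Its Schur condition does not rescue the step either, because it permits $\eta q_i\in(1,2)$, where $a_k<0$ at every reaching step.

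The repair is to tie the gains to the layer width. There are two options.
\begin{enumerate}
\item Assume $\eta(q_i+\varepsilon_i/\phi_i)\le1$. Then outside the layer $(1-\eta q_i)|S^i_k|>(1-\eta q_i)\phi_i\ge\eta\varepsilon_i$, so $a_k>0$ and your chain of inequalities is valid.
\item Keep $0<\eta q_i\le1$ and add $\eta\varepsilon_i+\bar R^i_k\le(2-\eta q_i)\phi_i$, a remainder-robust form of the Schur bound. Then split cases. If $a_k\ge0$ you get the decrease by $\delta_i$. If $a_k<0$, then $|S^i_{k+1}|\le\eta\varepsilon_i-(1-\eta q_i)\phi_i+\bar R^i_k\le\phi_i$, so the component enters the layer at that very step.
\end{enumerate}
In either variant, every step before first entry is non-overshooting. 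Hence $\phi_i<|S^i_{N-1}|\le|S^i_0|-(N-1)\delta_i$, and the bound $N_i\le\lceil(|S^i_0|-\phi_i)/\delta_i\rceil$ survives.

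With that fixed, the rest of your argument is sound, and in two places it is sharper than the paper's.
\begin{itemize}
\item Your condition $\bar R^i_k\le(1-|1-\eta(q_i+\varepsilon_i/\phi_i)|)\phi_i$ is what actually makes $\mathcal B_\phi$ invariant. This ensures all components are inside simultaneously after $\max_i N_i$ steps, a point the paper never checks.
\item Inside the layer $r_k$ is linear in $S_k$. Given $\|J^{-1}\|\le\beta$ and Lipschitz $J$, this gives $R_k=\mathcal O(\|S_k\|^2)$ and hence genuine local exponential convergence of $S_k$ to zero. The paper only obtains ultimate boundedness by $\bar R_\star/(1-\gamma)$.
\end{itemize}
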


\begin{IEEEproof} Since \(S\) is continuously differentiable, Taylor's theorem gives
\[
S(z_{k+1})
=
S(z_k)
+
J_k(z_{k+1}-z_k)
+
R_k,~R_k
=
\mathcal O(\|\eta\Delta z_k\|^2).
\]

Using
\[
z_{k+1}=z_k+\eta\Delta z_k,
\]
we obtain
\[
S_{k+1}
=
S_k+\eta J_k\Delta z_k+R_k.
\]
Because the structured correction is computed from
\[
J_k\Delta z_k=r_k,
\]
it follows that
\[
S_{k+1}
=
S_k+\eta r_k+R_k.
\]
Substituting the reaching law gives
\[
S_{k+1}
=
S_k
-
\eta
\left[
QS_k+
\mathcal K
\operatorname{sat}
\left(
\frac{S_k}{\phi}
\right)
\right]
+
R_k.
\]

For the \(i\)-th residual component,
\[
S_{k+1}^{i}
=
S_{k}^{i}
-
\eta q_iS_{k}^{i}
-
\eta\varepsilon_i
\operatorname{sat}
\left(
\frac{S_{k}^{i}}{\phi_i}
\right)
+
R_{k}^{i}.
\]

Outside the boundary layer,
\[
|S_{k}^{i}|>\phi_i \implies \operatorname{sat}
\left(
\frac{S_{k}^{i}}{\phi_i}
\right)
=
\operatorname{sgn}(S_{k}^{i}).
\]

Therefore,
\[
S_{k+1}^{i}
=
(1-\eta q_i)S_{k}^{i}
-
\eta\varepsilon_i\operatorname{sgn}(S_{k}^{i})
+
R_{k}^{i}.
\]
Multiplying by \(\operatorname{sgn}(S_{k}^{i})\) yields
\[
\operatorname{sgn}(S_{k}^{i})S_{k+1}^{i}
=
(1-\eta q_i)|S_{k}^{i}|
-
\eta\varepsilon_i
+
\operatorname{sgn}(S_{k}^{i})R_{k}^{i}.
\]
Using
\[
|\operatorname{sgn}(S_{k}^{i})R_{k}^{i}|
\le
|R_{k}^{i}|
\le
\bar R_{k}^{i},
\]
we obtain
\[
|S_{k+1}^{i}|
\le
(1-\eta q_i)|S_{k}^{i}|
-
\eta\varepsilon_i
+
\bar R_{k}^{i}.
\]
Since \(q_i>0\), this implies
\[
|S_{k+1}^{i}|
\le
|S_{k}^{i}|
-
\left(
\eta\varepsilon_i-\bar R_{k}^{i}
\right).
\]
If
\[
\eta\varepsilon_i>\bar R_{k}^{i},
\]
then \(|S_{k}^{i}|\) decreases by at least
\[
\delta_i
=
\eta\varepsilon_i-\bar R_{k}^{i}>0
\]
at every step outside the boundary layer. Hence the number of steps required to reach
\[
|S^{i}|\le\phi_i
\]
is bounded by
\[
N_i
\le
\left\lceil
\frac{|S_{0}^{i}|-\phi_i}{\delta_i}
\right\rceil.
\]
Thus each residual component reaches the boundary layer in finite time. Once inside the boundary layer $|S_{k}^{i}|\le \phi_i$, the saturation function satisfies
\[
\operatorname{sat}
\left(
\frac{S_{k}^{i}}{\phi_i}
\right)
=
\frac{S_{k}^{i}}{\phi_i}.
\]
In vector form,
\[
\operatorname{sat}
\left(
\frac{S_k}{\phi}
\right)
=
\Phi^{-1}S_k,~~
\Phi=\operatorname{diag}(\phi).
\]
Therefore, the residual dynamics become
\[
S_{k+1}
=
\left[
I-\eta(Q+\mathcal K\Phi^{-1})
\right]S_k
+
R_k.
\]
Define
\[
M_\eta
=
I-\eta(Q+\mathcal K\Phi^{-1}).
\]
If
\[
\rho(M_\eta)<1,
\]
then \(M_\eta\) is Schur stable. Hence there exists a norm \(\|\cdot\|_\star\) and a constant \(\gamma\in(0,1)\) such that
\[
\|M_\eta S\|_\star
\le
\gamma\|S\|_\star \implies \|S_{k+1}\|_\star
\le
\gamma\|S_k\|_\star
+
\|R_k\|_\star.
\]

If the Taylor remainder is bounded in the layer by
\[
\|R_k\|_\star\le \bar R_\star,
\]
then repeated application gives
\[
\|S_k\|_\star
\le
\gamma^{k-k_0}\|S_{k_0}\|_\star
+
\frac{1-\gamma^{k-k_0}}{1-\gamma}\bar R_\star.
\]
Consequently,
\[
\limsup_{k\to\infty}
\|S_k\|_\star
\le
\frac{\bar R_\star}{1-\gamma}.
\]
Therefore, the residual remains ultimately bounded in a neighbourhood of the origin whose radius is determined by the Taylor remainder and the contraction margin \(1-\gamma\).

For diagonal \(Q\), \(\mathcal K\), and \(\Phi\), the matrix \(M_\eta\) is diagonal, and its eigenvalues are
\[
\lambda_i(M_\eta)
=
1-\eta
\left(
q_i+\frac{\varepsilon_i}{\phi_i}
\right).
\]
Thus the Schur condition is equivalent to
\[
\left|
1-\eta
\left(
q_i+\frac{\varepsilon_i}{\phi_i}
\right)
\right|
<1,
\]
which gives the explicit step-size condition
\[
0<
\eta
\left(
q_i+\frac{\varepsilon_i}{\phi_i}
\right)
<2.
\]

Finally, by Theorem~\ref{thm:inertia_characterisation}, the residual Jacobian \(J^\star\) is nonsingular at the regular local minimiser. Hence, by the inverse function theorem, \(S\) is locally invertible near \(z^\star\). Therefore, ultimate boundedness of \(S_k\) implies ultimate boundedness of \(z_k\) around \(z^\star\). More precisely, there exists \(c_z>0\) such that
\[
\|z_k-z^\star\|
\le
c_z\|S_k\|
\]
for all \(z_k\) sufficiently close to \(z^\star\). Thus the residual boundary-layer accuracy directly induces a corresponding primal--dual accuracy around the nondegenerate locally isolated KKT point.
\end{IEEEproof}

The convergence properties of the proposed framework are established through the residual manifold and its associated Jacobian structure. The algorithm is designed to drive the residual vector \(S(z)\), containing stationarity, feasibility, and complementarity conditions, towards zero through a prescribed sliding-mode reaching law. However, unlike conventional primal--dual approaches, convergence is not characterised solely through first-order optimality conditions.

A key theoretical result of the paper is that the residual manifold is constructed such that

\[
S(z)=0
\Longleftrightarrow
\text{KKT conditions}.
\]

Furthermore, under LICQ, the Jacobian of the residual system possesses a specific inertia structure if and only if the second-order sufficient optimality condition holds. Consequently, the residual Jacobian is nonsingular precisely at locally optimal KKT points satisfying LICQ and SOSC. This result is significant because many primal--dual, complementarity, and gradient-based methods may converge to any KKT point, including saddle points or degenerate stationary solutions. In contrast, the proposed framework embeds second-order information directly within the residual dynamics through the Jacobian structure, thereby linking local convergence to genuine locally optimal solutions.

An important consequence is the separation between optimality characterisation and algorithm design. Once the residual manifold and its Jacobian properties have been established, the optimisation problem is transformed into a residual regulation problem. The role of the numerical algorithm is then simply to drive the residual vector \(S(z)\) towards zero. Any update law capable of stabilising the residual dynamics may, in principle, be employed. The proposed sliding-mode reaching law is adopted because it provides a direct and robust mechanism for regulating the residual system and enforcing convergence towards the KKT manifold. Therefore, the optimality guarantees originate primarily from the residual manifold and its Jacobian structure rather than from a particular optimisation update rule.

For convex optimisation problems with convex objective and inequality constraints and affine equality constraints, the KKT conditions are both necessary and sufficient for global optimality. Therefore,

\[
S(z)=0
\Longleftrightarrow
\text{KKT conditions}
\Longleftrightarrow
\text{global optimum},
\]

which implies that driving the sliding manifold to zero is equivalent to converging to the optimal solution.

From a computational perspective, the dominant cost per iteration arises from solving the structured linear system associated with the residual Jacobian. This complexity is comparable to Newton-type primal--dual methods. However, the structured block formulation exploits the sparsity and partitioned structure of the KKT system while avoiding explicit multiplier projections and repeated active-set updates. The resulting framework therefore combines a theoretically grounded residual formulation with an efficient computational structure that is amenable to larger-scale implementations.

\section{Numerical Validation}
\label{sec:numerical_validation}
The numerical study evaluates the central claim of the proposed projection-free SM–KKT framework: that direct regulation of a unified KKT residual can simultaneously address stationarity, primal and dual feasibility, and complementarity, thereby recovering a valid constrained optimum rather than merely reducing the objective function. Across the four benchmark problems, SM–KKT demonstrates the strongest overall performance, achieving the smallest final KKT residuals, accurately recovering the reference solutions, and reaching the prescribed residual boundary layer earlier than the projected primal–dual, augmented-Lagrangian, quadratic-penalty, and log-barrier methods. The benchmarks encompass distinct optimisation challenges, including nonconvexity, narrow Rosenbrock valleys, annular feasible regions with active-set transitions, and nonlinear equality manifolds with oscillatory objectives. All methods are evaluated using common initialisation, stopping conditions, and residual assessment criteria. Together, these experiments provide evidence of the accuracy and computational efficiency of SM–KKT across a diverse range of nonlinear constrained optimisation problems.
\subsection{Benchmark Problems}
The proposed projection-free SM--KKT solver is evaluated on four nonlinear constrained optimisation problems selected to exhibit different objective landscapes, feasible-set geometries, and constraint interactions. The benchmark suite includes nonconvex objective functions, nonlinear equality and inequality constraints, annular feasible regions, and highly nonlinear trigonometric constraints. These examples are intentionally chosen to assess the ability of the proposed residual-driven framework to simultaneously enforce stationarity, feasibility, and complementarity while converging towards the optimal solution.

The following benchmark problems are considered:

\begin{enumerate}
    \item Complex nonlinear optimisation with mixed equality and inequality constraints.
    \item Rosenbrock-type constrained optimisation problem.
    \item Banana-shaped objective with annular feasible region.
    \item Trigonometric nonlinear constrained optimisation problem.
\end{enumerate}

For each benchmark, the numerical experiments evaluate the convergence of the proposed KKT residual manifold, the evolution of stationarity, feasibility, and complementarity measures, the reduction of the objective error \( |f(x_k)-f^\star| \), and the recovery of the optimal solution. In addition, the first iteration at which the residual enters the prescribed sliding boundary layer is reported to quantify the convergence speed of the residual dynamics. The benchmark set therefore provides a comprehensive assessment of convergence behaviour, constraint satisfaction, trajectory evolution, and computational efficiency across a diverse range of constrained optimisation scenarios.

\begin{table}[t]
\centering
\rowcolors{2}{gray!15}{white}
\caption{Summary of the four nonlinear constrained optimisation benchmark problems, including their objective structure and associated equality and inequality constraints.}
\begin{tabular}{lcccc}
\toprule
Problem & Objective & Equality & Inequality & Type\\
\midrule
P1 & Nonlinear & Yes & Yes & Nonconvex\\
P2 & Rosenbrock & Yes & Yes & Nonconvex\\
P3 & Banana & No & Yes & Annular feasible set\\
P4 & Trigonometric & Yes & Yes & Highly nonlinear\\
\bottomrule
\end{tabular}
\end{table}
\subsection{Compared Algorithms}

The proposed method is compared against four representative constrained optimisation algorithms commonly used in nonlinear programming to solve optimisation problem (1) in the paper.

\begin{enumerate}
\item Proposed SM--KKT Solver.
\item Projected Primal--Dual Method.
\item Augmented Lagrangian Method.
\item Quadratic Penalty Method.
\item Log-Barrier Penalty Method.
\end{enumerate}

The projected primal--dual method represents a classical projection-based strategy for enforcing dual feasibility. The augmented Lagrangian method combines multiplier updates with quadratic penalties and is widely regarded as a robust benchmark for constrained optimisation. The quadratic penalty method represents a purely penalty-based approach, while the log-barrier method provides an interior-point baseline based on barrier curvature. These methods cover a broad spectrum of constrained optimisation techniques and therefore provide a meaningful basis for comparison.

The selected algorithms also provide a direct comparison with the main features of the proposed framework. In particular, they represent projection-based, penalty-based, barrier-based, and primal--dual optimisation paradigms, whereas the proposed SM--KKT solver operates directly on the KKT residual manifold through a projection-free residual regulation mechanism. This comparison therefore highlights the effectiveness of the proposed residual-driven formulation across several widely used classes of constrained optimisation methods.

For all simulations, the same stopping tolerance, residual definitions, and performance metrics are used to ensure a consistent comparison across the selected algorithms. In addition to objective convergence, the algorithms are evaluated using stationarity, feasibility, complementarity, final KKT residual, computational time, and the iteration at which the residual first enters the prescribed sliding boundary layer. Unless otherwise stated, the algorithmic parameters are fixed as reported in Table~\ref{tab:simulation_parameters}. Problem-specific step sizes are only adjusted when required to maintain numerical stability for highly nonlinear benchmarks.

\begin{table}[thpb]
\centering
\rowcolors{2}{gray!15}{white}
\caption{Common numerical parameters and algorithmic settings used for all
benchmark problems and comparison methods.}
\label{tab:simulation_parameters}
\resizebox{\columnwidth}{!}{
\begin{tabular}{lclc}
\hline
Parameter & Value & Parameter & Value \\
\hline
Maximum iterations
& \(2500\)
& Stopping tolerance
& \(10^{-6}\) \\
FB regularisation \(\rho_c\)
& \(10^{-18}\)
& Residual step size \(\eta\)
& \(0.8\) \\
Sliding gains \(q_x,q_c,q_h\)
& \(2,2,2\)
& Saturation gains \(\varepsilon_x,\varepsilon_c,\varepsilon_h\)
& \(10^{-2}\) \\
Boundary layers \(\phi_x,\phi_c,\phi_h\)
& \(10^{-3}\)
& Maximum correction step
& \(0.12\) \\

Penalty parameter \(\rho\)
& \(1.0\)
& Log-barrier parameter \(\tau\)
& \(10^{-2}\) \\
\hline
\end{tabular}}
\end{table}

\subsection{Performance Metrics}

The performance of each algorithm is evaluated using the following metrics:

\begin{itemize}
\item Objective function value \(f(x)\).
\item Stationarity residual $\|\nabla_x\mathcal L(x,\lambda,\mu)\|$.
\item Equality constraint violation $\|h(x)\|.$
\item Complementarity residual $\|\lambda\odot g(x)\|.$
\item Overall KKT residual $\|S(x,\lambda,\mu)\|.$
\item Path length.
\item Distance to the reference solution.
\item Number of iterations.
\end{itemize}

These metrics quantify both optimality and feasibility while also characterising the efficiency of the optimisation trajectory.

\subsection{\color{blue}Problem I: Complex Nonlinear Mixed Constraints}
\label{subsec:problem_complex_nonlinear}

The first benchmark is a two-dimensional nonlinear constrained optimisation problem involving a nonconvex objective, three nonlinear inequality constraints, and two nonlinear equality constraints. This example is used to assess the ability of the proposed solver to regulate stationarity, feasibility, and complementarity in a strongly coupled KKT residual system. The problem is defined as
\[
\begin{aligned}
f(x)
=&\;0.8(x_1-0.9)^2
+1.4(x_2-0.4)^2
+0.15x_1^4
+0.10x_2^4  \\
&+0.20x_1x_2
+0.05\sin(2x_1+x_2).
\end{aligned}
\]

The inequality constraints ($g(x)\leq0$) are
\begin{align*}
g_1(x)&=x_1^2+0.6x_2^2-1.20,
\\
g_2(x)&=-x_1-0.75,
\\
g_3(x)&=0.35x_1+x_2-1.10.
\end{align*}

The equality constraints $(h(x)=0)$ are
\begin{align*}
   h_1(x)&=x_1+x_2-0.85,
\\
h_2(x)&=0.20x_1^2+x_2-0.65. 
\end{align*}

The initial condition used for all algorithms is
\[
x_0=
\begin{bmatrix}
-1.35\\
1.45
\end{bmatrix},
\qquad
\lambda_0=
\begin{bmatrix}
0.2\\
0.2\\
0.2
\end{bmatrix},
\qquad
\mu_0=
\begin{bmatrix}
0\\
0
\end{bmatrix}.
\]
The regularisation parameter in the Fischer--Burmeister residual is
\[
\rho_c=10^{-8}.
\]

This benchmark is particularly demanding because the feasible set is defined by the intersection of curved inequality and equality manifolds, while the sinusoidal term in the objective introduces additional nonconvexity. Therefore, convergence requires simultaneous regulation of the primal variables, inequality multipliers, and equality multipliers.

\subsubsection{Results and Discussion}

The numerical results for the complex nonlinear mixed-constraint problem are
summarised in Table~\ref{tab:problem_complex_results}. The three-dimensional
objective surface and optimisation trajectories are shown in
Fig.~\ref{fig:problem1_3d}, while the corresponding two-dimensional
constraint geometry and optimisation trajectories are shown in
Fig.~\ref{fig:problem1_trajectory}. The KKT residual histories and convergence
metrics are presented in Figs.~\ref{fig:problem1_residuals}
and~\ref{fig:problem1_metrics}, respectively.

As illustrated in Fig.~\ref{fig:problem1_3d}, the competing algorithms evolve
over a nonconvex objective landscape subject to coupled equality and inequality
constraints. The proposed SM--KKT solver follows a comparatively direct
trajectory towards the reference constrained optimum.
\begin{figure}[thpb]
\centering
\includegraphics[width=\linewidth]{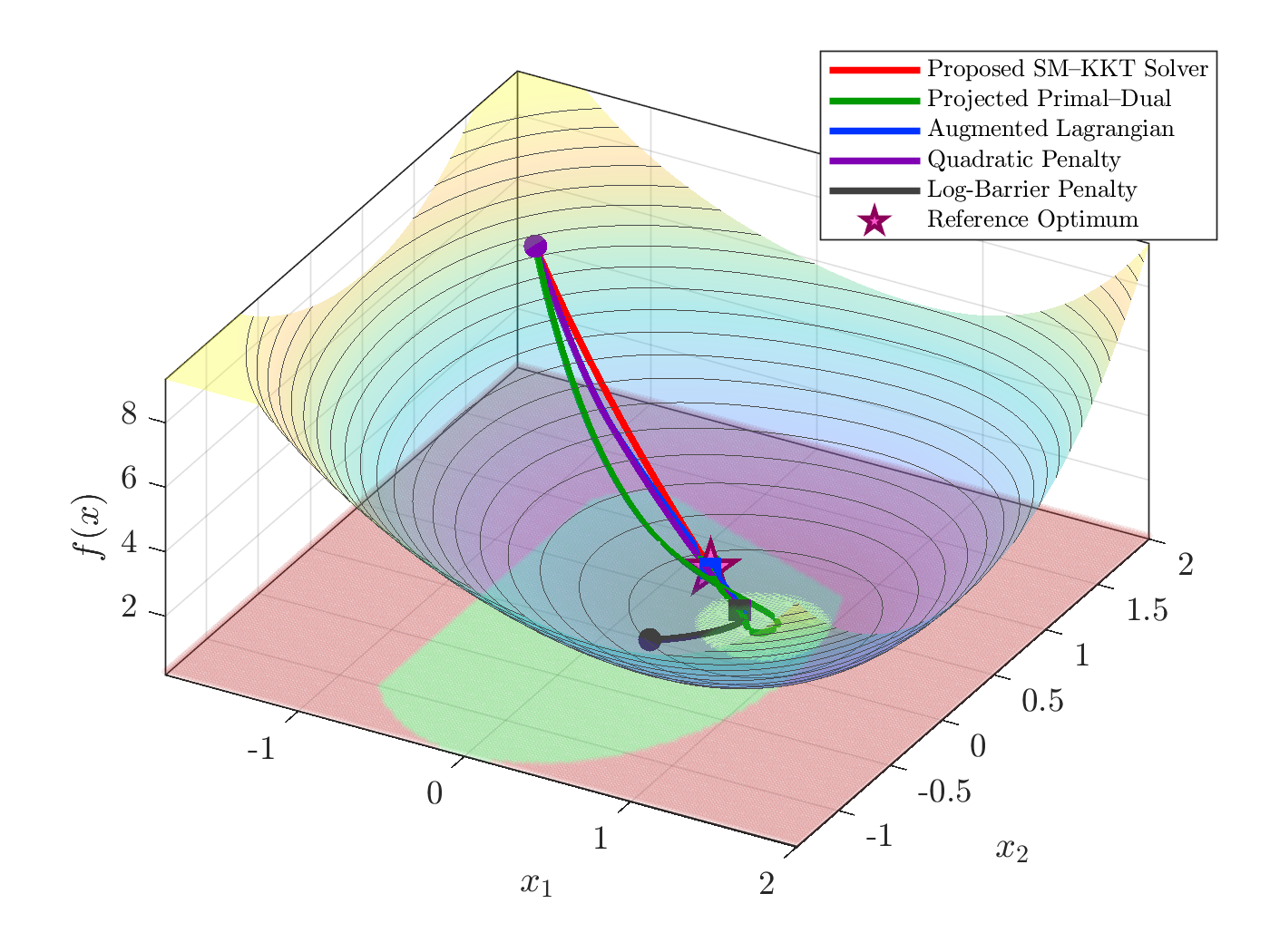}
\caption{Problem I: three-dimensional nonconvex objective surface and
optimisation trajectories. The proposed SM--KKT solver follows a direct
and efficient path towards the reference constrained optimum while
navigating the coupled equality and inequality constraints.}
\label{fig:problem1_3d}
\end{figure}
The corresponding two-dimensional representation in
Fig.~\ref{fig:problem1_trajectory} makes the constraint geometry more explicit.
The objective contours, feasible and infeasible regions, and equality and
inequality boundaries show that convergence requires simultaneous navigation
of the objective landscape and the coupled constraint geometry. In this view,
the SM--KKT trajectory progresses directly towards the reference constrained
optimum while approaching the admissible constraint manifold.

\begin{figure}[thpb]
\centering
\includegraphics[width=\linewidth]{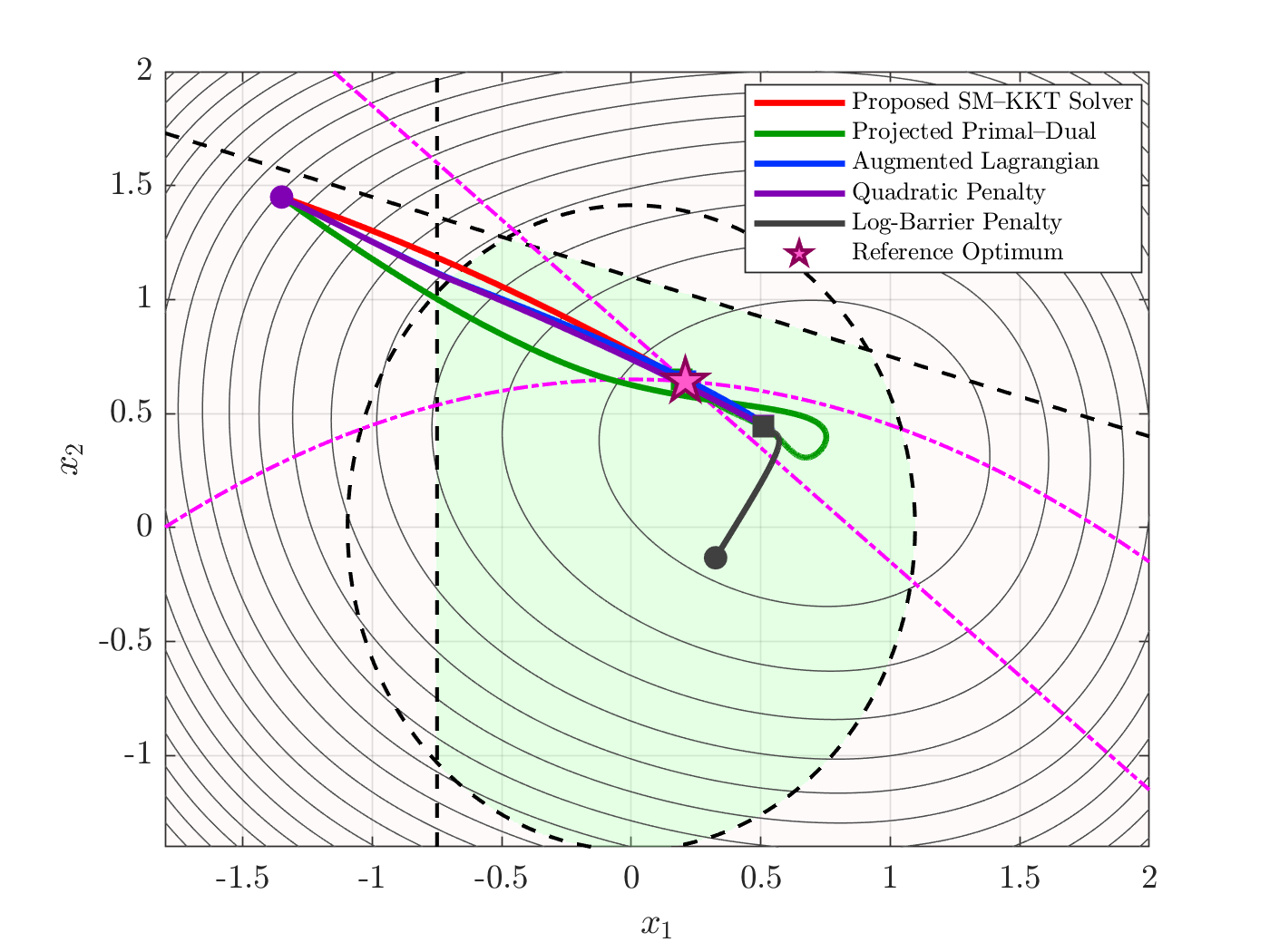}
\caption{Problem I: objective contours (\textcolor{darkgray}{dark grey}), feasible region (\textcolor{green!60!black}{green}),
infeasible region (\textcolor{pink!85!black}{light pink}), inequality constraint boundaries (black dashed),
equality constraints (\textcolor{magenta}{magenta dashed}), and optimisation trajectories.
The proposed SM--KKT solver follows a direct and efficient path towards
the reference constrained optimum while navigating the coupled nonlinear
constraint geometry.}
\label{fig:problem1_trajectory}
\end{figure}

Quantitatively, Table~\ref{tab:problem_complex_results} shows that SM--KKT is
the only method that reaches the prescribed KKT tolerance within the maximum
iteration budget. It converges in \(444\) iterations and achieves a final KKT residual of
\[
\|S\| = 9.022\times10^{-7}.
\]
Furthermore, the residual enters the prescribed sliding boundary layer after
\(390\) iterations. At termination, the stationarity, equality-feasibility,
and complementarity residuals are
\[
\|S^x\| = 9.02\times10^{-7},~
\|S^h\| = 1.72\times10^{-12}, ~
\|S^c\| = 2.39\times10^{-9},
\]
respectively. These values show that stationarity, equality feasibility, and
complementarity are regulated simultaneously rather than sequentially. Since
the terminal overall residual is essentially determined by
\(\|S^x\|\), while \(\|S^h\|\) and \(\|S^c\|\) are substantially smaller,
the final accuracy is primarily limited by stationarity rather than by
residual feasibility or complementarity errors. This behaviour is also evident in Fig.~\ref{fig:problem1_residuals}. The
SM--KKT residual components decrease jointly towards the prescribed tolerance, whereas the competing methods exhibit slower convergence, stagnation, or persistent residual errors.

\begin{figure*}[t]
\centering
\includegraphics[width=\linewidth]{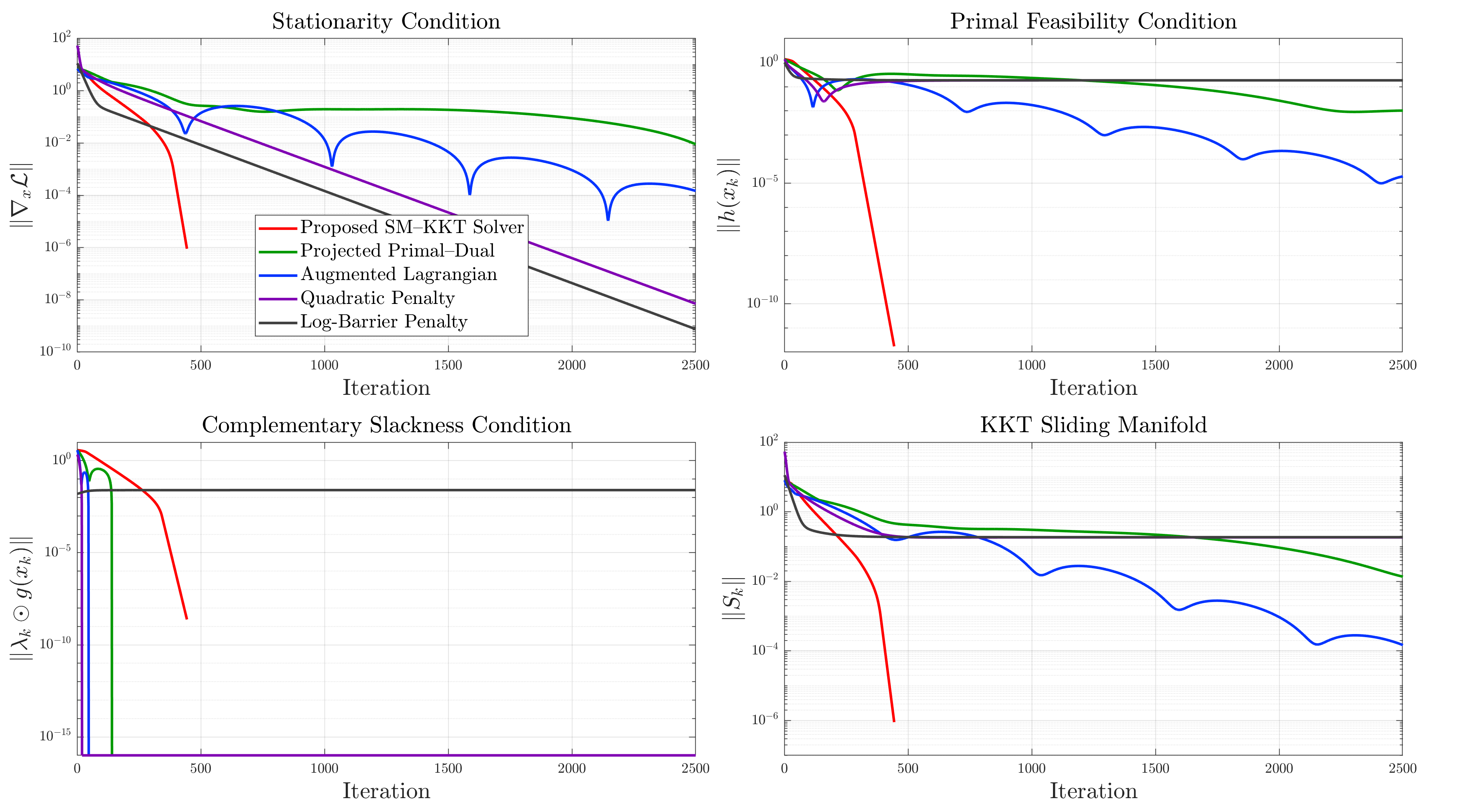}
\caption{Problem I: comparative evolution of the stationarity,
equality-feasibility, complementarity, and overall KKT residuals.
The proposed SM--KKT solver drives all residual components simultaneously
below the prescribed tolerance, whereas the competing methods exhibit
slower decay, stagnation, or persistent constraint violations.}
\label{fig:problem1_residuals}
\end{figure*}

Among the competing methods, the Augmented Lagrangian method provides the
closest result. As reported in Table~\ref{tab:problem_complex_results}, it
achieves a final KKT residual of $\|S\| = 1.469\times10^{-4},$ and enters the prescribed boundary layer after \(1993\) iterations. Although
its final objective error is only \(4.0\times10^{-5}\), its terminal KKT
residual remains more than two orders of magnitude larger than that obtained
by SM--KKT. This distinction is important because a small objective error
alone does not establish comparable stationarity, feasibility, and
complementarity accuracy.

The projected primal--dual method terminates with $\|S\| = 1.362\times10^{-2},$ and does not reach the prescribed convergence criterion within the maximum
iteration budget. As shown in Fig.~\ref{fig:problem1_residuals}, substantial
residual errors remain at termination. Table~\ref{tab:problem_complex_results}
also reports an objective error of \(2.182\times10^{-2}\), considerably larger
than those obtained by SM--KKT and the Augmented Lagrangian method. Its
cumulative primal trajectory length is \(3.133\), which is also the largest
among the compared methods. The quadratic-penalty and log-barrier methods exhibit still larger terminal
KKT residuals of $1.843\times10^{-1}
$ and $1.867\times10^{-1},$ respectively. Their corresponding objective errors are \(3.164\times10^{-1}\) and \(3.171\times10^{-1}\). Together with the residual
histories in Fig.~\ref{fig:problem1_residuals}, these results show that
objective reduction along an optimisation trajectory should not by itself be
interpreted as convergence to the constrained optimum; the complete KKT
residual must also be driven to the required accuracy.

Figure~\ref{fig:problem1_metrics} provides a complementary comparison through
the multiplier errors, objective error, and cumulative primal trajectory
length. The SM--KKT solver combines accurate multiplier recovery and objective
convergence with a cumulative primal path length of \(1.759\). By comparison,
the projected primal--dual and Augmented Lagrangian methods produce path
lengths of \(3.133\) and \(2.643\), respectively. The quadratic-penalty method
produces a path length of \(2.112\). These results are consistent with the
comparatively direct SM--KKT trajectory observed in
Figs.~\ref{fig:problem1_3d} and~\ref{fig:problem1_trajectory}.
\begin{figure*}[t]
\centering
\includegraphics[width=\linewidth]{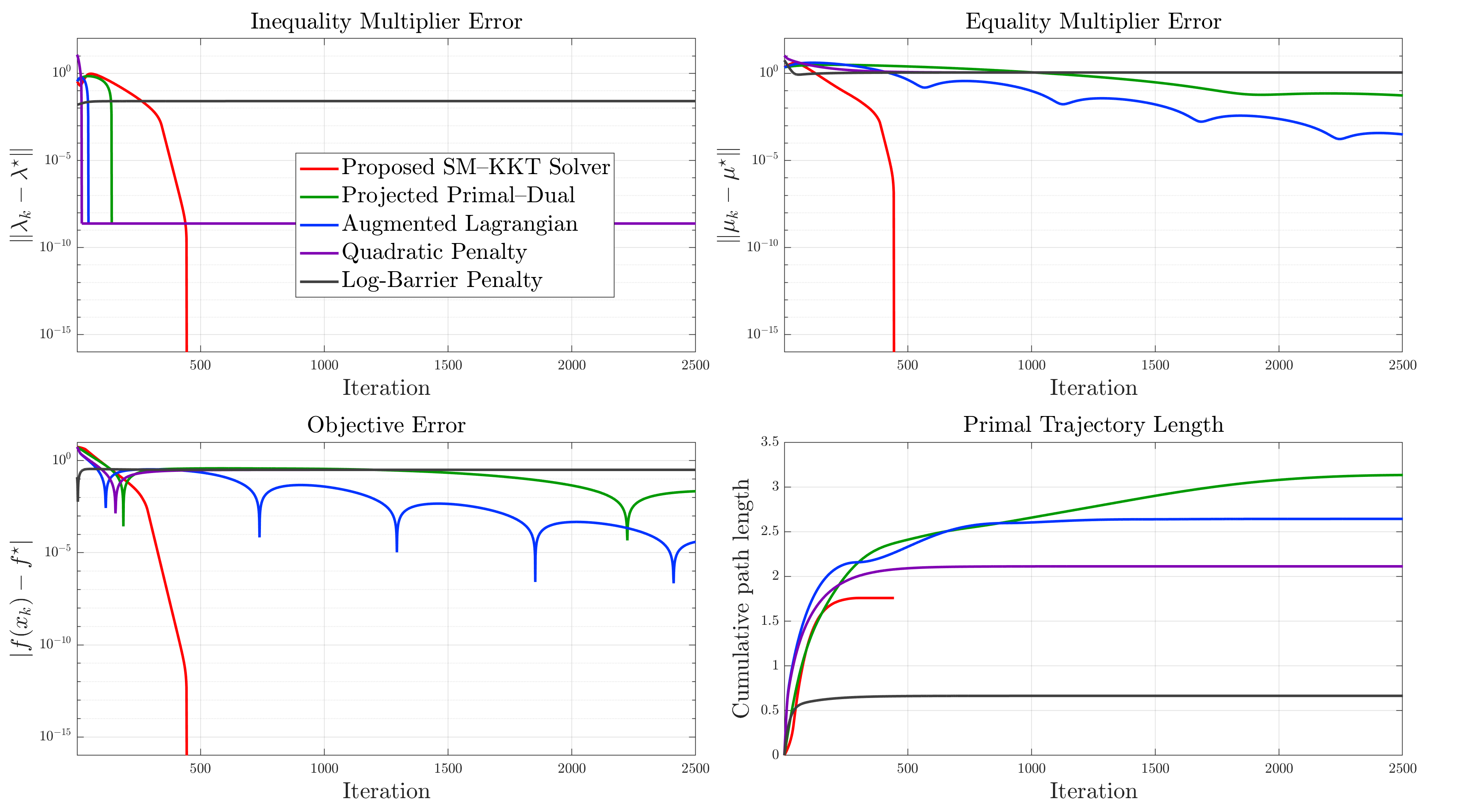}
\caption{Problem I: evolution of multiplier errors, objective error,
and cumulative primal trajectory length. The proposed SM--KKT solver
achieves accurate multiplier recovery and objective convergence while
following a direct and efficient path towards the constrained optimum,
whereas the competing methods exhibit slower or less accurate convergence.}
\label{fig:problem1_metrics}
\end{figure*} It is important, however, not to interpret path length independently of
terminal accuracy. The log-barrier method has the shortest reported path,
\(0.665\), compared with \(1.759\) for SM--KKT. However, it terminates with
an objective error of \(3.171\times10^{-1}\) and a KKT residual of
\(1.867\times10^{-1}\). Hence, its shorter trajectory does not represent a
more efficient route to the constrained optimum. A meaningful comparison of
trajectory efficiency must therefore be considered jointly with the attained
KKT and objective accuracies.

Taken together, Figs.~\ref{fig:problem1_3d},
\ref{fig:problem1_trajectory}, \ref{fig:problem1_residuals},
and~\ref{fig:problem1_metrics}, together with
Table~\ref{tab:problem_complex_results}, show that the principal advantage of
SM--KKT in this benchmark is not simply objective reduction or a short primal
trajectory. Rather, the method combines a direct trajectory towards the
reference solution with simultaneous regulation of stationarity, equality
feasibility, and complementarity. It is the only tested method that reaches
the prescribed KKT tolerance within the iteration budget while recovering the
reference constrained optimum to high accuracy. These results support the use
of direct KKT-residual regulation for nonlinear mixed-constraint optimisation without explicit projection onto the feasible set.

\begin{table}[thpb]
\centering
\rowcolors{2}{gray!15}{white}
\caption{Problem I numerical comparison. The proposed SM--KKT solver is the
only method that reaches the prescribed KKT tolerance, while also recovering
the reference optimum with high accuracy and maintaining a competitive primal
trajectory length.}
\label{tab:problem_complex_results}
\begin{tabular}{lcccc}
\hline
Method & $\Phi$ Enter & $|f(x)-f^\star|$ & $\|S_k\|$ & Path \\
\hline
SM--KKT       & 390  & $\leq 10^{-18}$      & $9.022\times10^{-7}$ & 1.759 \\
Projected PD  & --   & $2.182\times10^{-2}$ & $1.362\times10^{-2}$ & 3.133 \\
Aug. Lag.     & 1993 & $4.0\times10^{-5}$   & $1.469\times10^{-4}$ & 2.643 \\
Quad. Penalty & --   & $3.164\times10^{-1}$ & $1.843\times10^{-1}$ & 2.112 \\
Log-Barrier   & --   & $3.171\times10^{-1}$ & $1.867\times10^{-1}$ & 0.665 \\
\hline
\end{tabular}
\end{table}
\subsection{\color{blue}Problem II: Rosenbrock-Type Constrained Optimisation}
\label{subsec:problem_rosenbrock}

The second benchmark considers a constrained Rosenbrock-type optimisation problem. This example is particularly challenging because the objective function contains a narrow curved valley that frequently causes slow convergence and oscillatory behaviour in gradient-based methods. The addition of multiple inequality constraints and an equality constraint further restricts the feasible region, creating a highly coupled optimisation landscape in which stationarity, feasibility, and complementarity must be satisfied simultaneously. The optimisation problem has the following objective function 
\[
f(x)
=
5(x_2-x_1^2)^2
+
(1-x_1)^2
+
0.1(x_1+x_2)^2 .
\]

The inequality constraints ($g(x)\leq0$) are
\begin{align*}
  g_1(x)&=x_1^2+x_2^2-1.80,
\\
g_2(x)&=-x_1-0.85,
\\
g_3(x)&=x_2-1.55,  
\end{align*}

while the equality $(h(x)=0)$ constraint is

\[
h(x)=x_1+\frac{1}{2}x_2-0.20.
\]

The feasible region is therefore formed by the intersection of a circular domain, two affine inequality constraints, and an affine equality manifold. The resulting constrained optimum lies within the Rosenbrock valley, making the benchmark particularly suitable for evaluating optimisation algorithms under strong geometric anisotropy. All methods are initialized from

\[
x_0=
\begin{bmatrix}
-1.25\\
1.65
\end{bmatrix},
\qquad
\lambda_0=
\begin{bmatrix}
0.1\\
0.1\\
0.1
\end{bmatrix},
\qquad
\mu_0=0.
\]

The objective landscape together with the optimisation trajectories is illustrated in Fig.~\ref{fig:problem2_3d}, while Fig.~\ref{fig:problem2_trajectory} shows the feasible region and projected optimisation paths.

\subsubsection{Results and Discussion}

The numerical results for the Rosenbrock benchmark are summarised in
Table~\ref{tab:problem_rosenbrock_results}. The three-dimensional Rosenbrock
objective surface and optimisation trajectories are shown in
Fig.~\ref{fig:problem2_3d}, while the corresponding two-dimensional objective
contours and optimisation trajectories are shown in
Fig.~\ref{fig:problem2_trajectory}. The KKT residual histories and convergence
metrics are presented in Figs.~\ref{fig:problem2_residuals}
and~\ref{fig:problem2_metrics}, respectively.

As illustrated in Fig.~\ref{fig:problem2_3d}, the optimisation algorithms
evolve over the characteristic narrow and strongly curved Rosenbrock valley
while simultaneously satisfying the imposed equality and inequality
constraints. Despite this challenging geometry, the proposed SM--KKT solver
progresses towards the reference constrained optimum.

\begin{figure}[thpb]
\centering
\includegraphics[width=\linewidth]{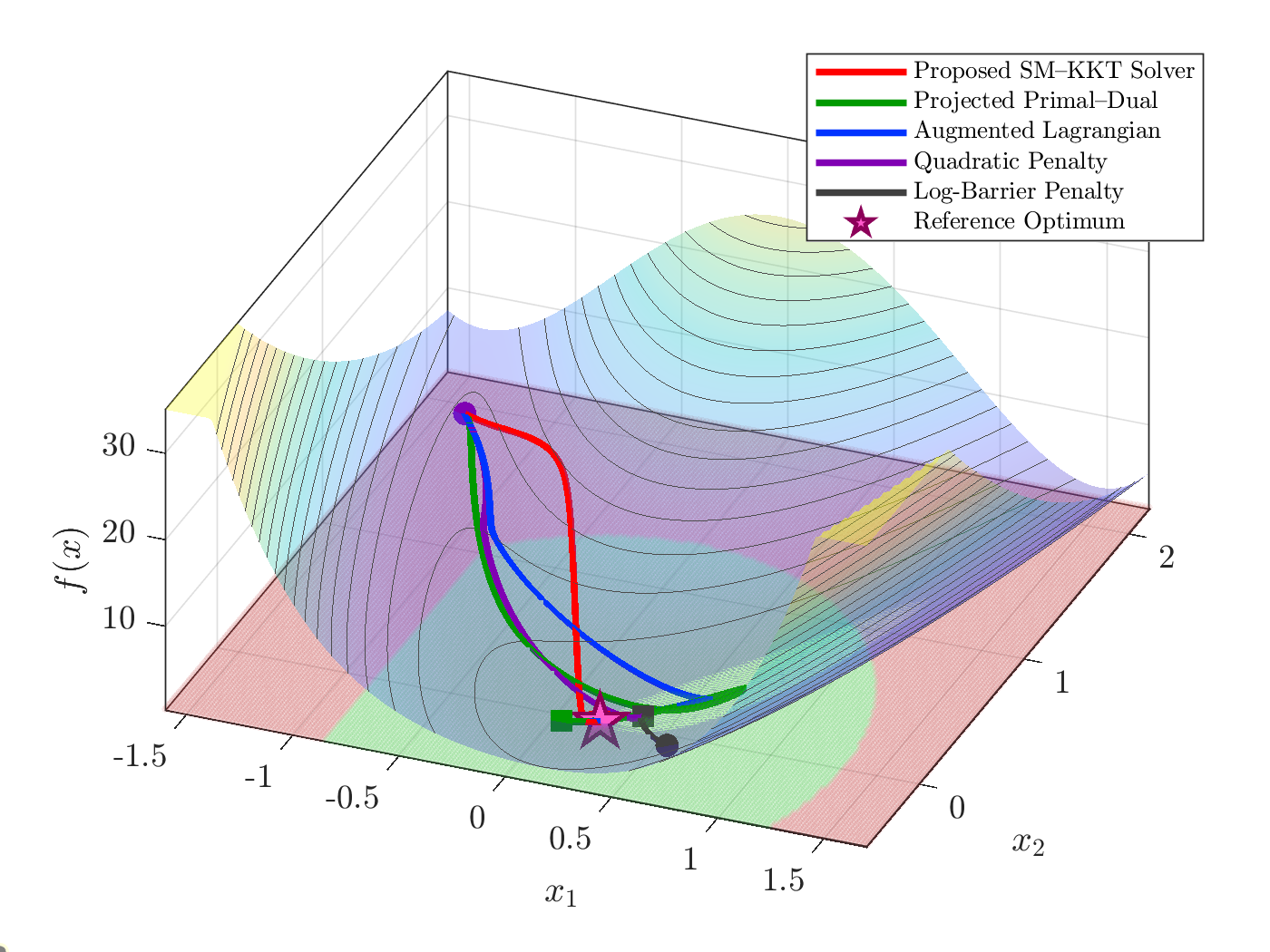}
\caption{Problem II: three-dimensional Rosenbrock-type objective surface
and optimisation trajectories. The proposed SM--KKT solver successfully
follows the narrow curved valley towards the reference constrained optimum,
despite the strong nonconvexity and coupled constraint geometry.}
\label{fig:problem2_3d}
\end{figure}

The corresponding two-dimensional representation in
Fig.~\ref{fig:problem2_trajectory} makes the optimisation geometry more
explicit. The objective contours and constraint boundaries highlight the
difficulty of progressing through the narrow curved valley while satisfying
the constraints. The SM--KKT trajectory successfully follows this geometry
towards the reference constrained optimum.

\begin{figure}[thpb]
\centering
\includegraphics[width=\linewidth]{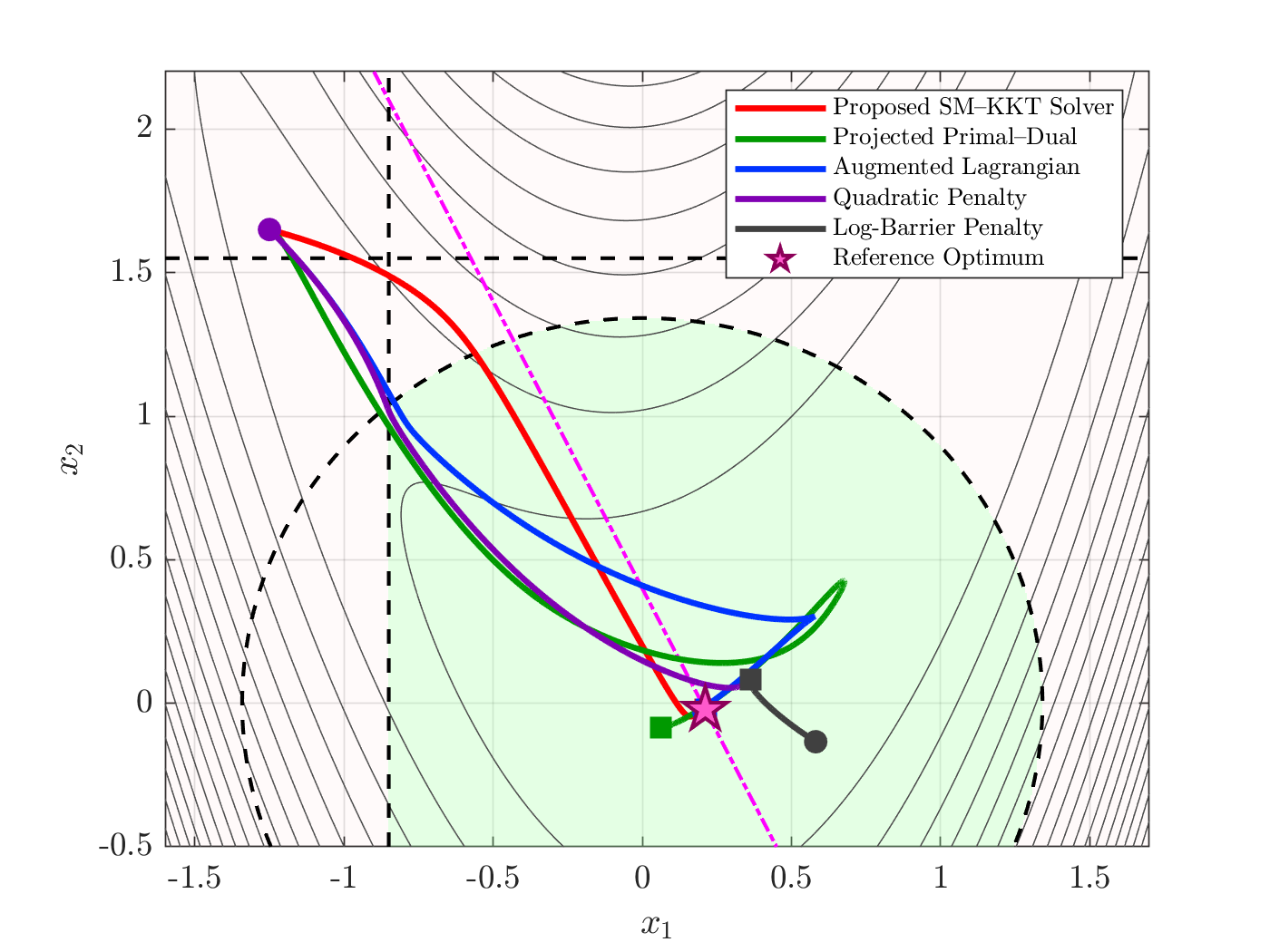}
\caption{Problem II: objective contours (\textcolor{darkgray}{dark grey}), feasible region (\textcolor{green!60!black}{green}),
infeasible region (\textcolor{pink!85!black}{light pink}), inequality constraint boundaries (black dashed),
equality constraint (\textcolor{magenta}{magenta dashed}), and optimisation trajectories.
The proposed SM--KKT solver successfully follows the narrow curved valley
towards the reference constrained optimum, despite the strong nonconvexity
and coupled constraint geometry.}
\label{fig:problem2_trajectory}
\end{figure}

Quantitatively, Table~\ref{tab:problem_rosenbrock_results} shows that SM--KKT
is the only method that reaches the prescribed KKT tolerance within the
maximum iteration budget of \(2500\) iterations. The proposed method converges
in \(1342\) iterations and achieves a final KKT residual of
\[
\|S\| = 9.801\times10^{-7}.
\]

Furthermore, the residual enters the prescribed sliding boundary layer after
\(1202\) iterations. At termination, the stationarity, equality-feasibility,
and complementarity residuals are
\[
\|S^x\| = 9.80\times10^{-7}, ~
\|S^h\| = 7.40\times10^{-14},~
\|S^c\| = 2.36\times10^{-12},
\]
respectively. These values show that stationarity, equality feasibility, and
complementarity are regulated simultaneously. Since the terminal overall
residual is essentially determined by \(\|S^x\|\), while
\(\|S^h\|\) and \(\|S^c\|\) are several orders of magnitude smaller, the
final numerical accuracy is primarily limited by stationarity rather than by
residual feasibility or complementarity errors. The objective error of at
most \(10^{-18}\), reported in
Table~\ref{tab:problem_rosenbrock_results}, further confirms accurate recovery
of the reference constrained optimum.

This behaviour is also evident in Fig.~\ref{fig:problem2_residuals}. The
SM--KKT solver exhibits a transient increase in the stationarity residual
while negotiating the strongly curved Rosenbrock valley, followed by a
sustained reduction of all KKT residual components towards the prescribed
tolerance. In contrast, the competing methods stagnate at substantially
larger residual levels and fail to achieve comparable simultaneous
satisfaction of the KKT conditions.

\begin{figure*}[thpb]
\centering
\includegraphics[width=\linewidth]{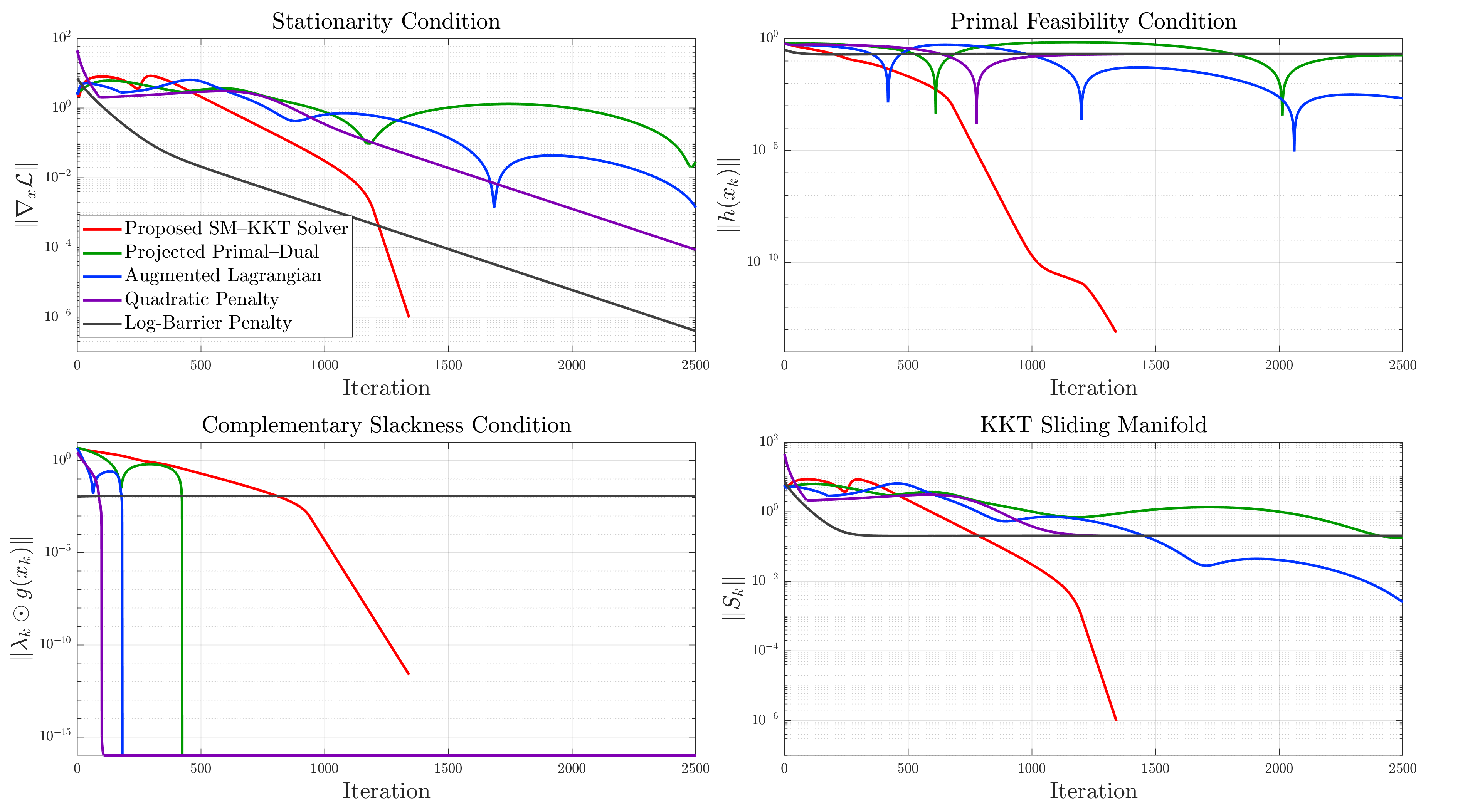}
\caption{Problem II: comparative evolution of the stationarity,
equality-feasibility, complementarity, and overall KKT residuals.
The proposed SM--KKT solver drives all residual components to the
prescribed tolerance despite the narrow curved Rosenbrock valley,
whereas the competing methods stagnate at substantially higher
residual levels.}
\label{fig:problem2_residuals}
\end{figure*}

Among the competing methods, the Augmented Lagrangian method provides the
closest result. As reported in Table~\ref{tab:problem_rosenbrock_results}, it
terminates with a final KKT residual of
\[
\|S\| = 2.531\times10^{-3}.
\]
Although its final objective error is relatively small,
\(2.670\times10^{-3}\), the method does not enter the prescribed sliding
boundary layer and requires the full iteration budget. Its terminal KKT
residual is more than three orders of magnitude larger than that obtained by
SM--KKT. This distinction is important because a small objective error alone
does not establish comparable stationarity, feasibility, and complementarity
accuracy.

The projected primal--dual method exhibits a substantially larger discrepancy,
terminating with
\[
\|S\| = 1.830\times10^{-1}.
\]
As shown in Fig.~\ref{fig:problem2_residuals}, substantial residual errors
remain at termination. Its trajectory in
Fig.~\ref{fig:problem2_trajectory} also undergoes a pronounced excursion
before approaching the constrained optimum. This behaviour is reflected in
the cumulative primal trajectory length of \(3.597\), reported in
Table~\ref{tab:problem_rosenbrock_results}, which is the largest among the
compared methods.

The quadratic-penalty and log-barrier methods exhibit similarly large
terminal KKT residuals of approximately
\[
\|S\| = 2.05\times10^{-1}.
\]
Their corresponding objective errors are \(2.111\times10^{-1}\) and
\(2.108\times10^{-1}\), respectively. Together with the residual histories
in Fig.~\ref{fig:problem2_residuals}, these results show that neither method
approaches the KKT manifold to the prescribed accuracy. Consequently,
objective reduction along an optimisation trajectory should not by itself be
interpreted as convergence to the constrained optimum; the complete KKT
residual must also be driven to the required accuracy.

Figure~\ref{fig:problem2_metrics} provides a complementary comparison through
the multiplier errors, objective error, and cumulative primal trajectory
length. The SM--KKT solver combines accurate multiplier recovery and objective
convergence with a cumulative primal path length of \(2.353\). By comparison,
the projected primal--dual and Augmented Lagrangian methods produce path
lengths of \(3.597\) and \(2.985\), respectively. The quadratic-penalty method
produces a comparable but slightly longer path of \(2.376\). These results are
consistent with the efficient SM--KKT trajectory observed in
Figs.~\ref{fig:problem2_3d} and~\ref{fig:problem2_trajectory}.

\begin{figure*}[thpb]
\centering
\includegraphics[width=\linewidth]{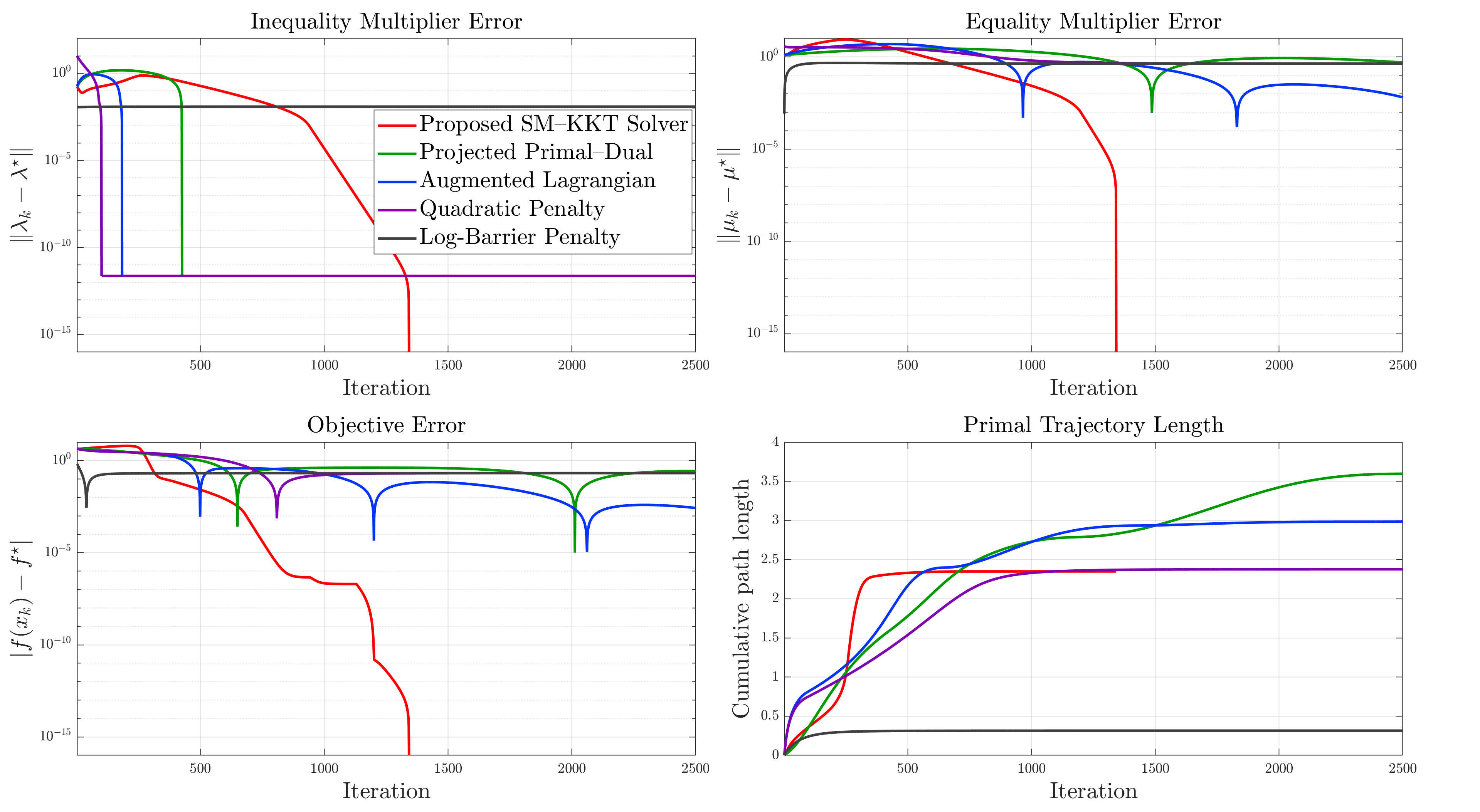}
\caption{Problem II: evolution of multiplier errors, objective error,
and cumulative primal trajectory length. The proposed SM--KKT solver
achieves accurate multiplier recovery and objective convergence while
maintaining an efficient trajectory through the narrow curved valley,
whereas the competing methods exhibit larger errors or longer
optimisation paths.}
\label{fig:problem2_metrics}
\end{figure*}

It is important, however, not to interpret path length independently of
terminal accuracy. The log-barrier method has the shortest reported path,
\(0.316\), compared with \(2.353\) for SM--KKT. However, it terminates with
an objective error of \(2.108\times10^{-1}\) and a KKT residual of
\(2.050\times10^{-1}\). Hence, its shorter trajectory does not represent a
more efficient route to the constrained optimum. A meaningful comparison of
trajectory efficiency must therefore be considered jointly with the attained
KKT and objective accuracies.

Taken together, Figs.~\ref{fig:problem2_3d},
\ref{fig:problem2_trajectory}, \ref{fig:problem2_residuals},
and~\ref{fig:problem2_metrics}, together with
Table~\ref{tab:problem_rosenbrock_results}, show that SM--KKT provides the
best overall performance for this benchmark. The method is the only tested
algorithm that reaches the prescribed KKT tolerance within the iteration
budget while simultaneously recovering the reference constrained solution
and regulating stationarity, equality feasibility, and complementarity to
high accuracy. These results demonstrate that direct KKT-residual regulation
remains effective in the narrow and strongly curved landscape characteristic
of the Rosenbrock benchmark.

\begin{table}[thpb]
\centering
\rowcolors{2}{gray!15}{white}
\caption{Problem II numerical comparison. The proposed SM--KKT solver is the
only method that reaches the prescribed KKT tolerance, while accurately
recovering the reference constrained optimum and maintaining an efficient
primal trajectory.}
\label{tab:problem_rosenbrock_results}
\begin{tabular}{lcccc}
\hline
Method & $\Phi$ Enter & $|f(x)-f^\star|$ & $\|S_k\|$ & Path \\
\hline
SM--KKT       & 1202 & $\leq 10^{-18}$      & $9.801\times10^{-7}$ & 2.353 \\
Projected PD  & --   & $2.713\times10^{-1}$ & $1.830\times10^{-1}$ & 3.597 \\
Aug. Lag.     & --   & $2.670\times10^{-3}$ & $2.531\times10^{-3}$ & 2.985 \\
Quad. Penalty & --   & $2.111\times10^{-1}$ & $2.050\times10^{-1}$ & 2.376 \\
Log-Barrier   & --   & $2.108\times10^{-1}$ & $2.050\times10^{-1}$ & 0.316 \\
\hline
\end{tabular}
\end{table}
\subsection{\color{blue}Problem III: Banana Objective with Annular Feasible Set}
\label{subsec:problem_banana}

The third benchmark considers a nonconvex banana-shaped objective function constrained by an annular feasible region together with an affine equality constraint. Unlike the previous examples, the feasible set is defined by the intersection of two nonlinear inequality constraints representing inner and outer circular boundaries. Consequently, the optimisation trajectory must evolve within a narrow annular region while simultaneously satisfying an equality manifold. This configuration creates a challenging geometric optimisation problem in which both active-set transitions and curvature effects play an important role.

The optimisation problem  objective function is
\[
f(x)
=
(1-x_1)^2
+
8(x_2-x_1^2)^2
+
0.05\sin(3x_1).
\]

The inequality constraints ($g(x)\leq0$) are
\begin{align*}
    g_1(x)
&=
x_1^2+x_2^2-2.2,
\\
g_2(x)
&=
0.45-(x_1^2+x_2^2),
\end{align*}
which define the annular feasible region
\[
0.45
\le
x_1^2+x_2^2
\le
2.2.
\]

The equality $(h(x)=0)$ constraint is

\[
h(x)
=
x_1+x_2-0.35.
\]

The feasible set is therefore the intersection of an annulus and an affine manifold. The resulting constrained optimum lies on a highly curved objective landscape, making this benchmark particularly suitable for assessing the ability of the proposed solver to navigate complex feasible geometries while maintaining KKT feasibility. All methods are initialized from

\[
x_0=
\begin{bmatrix}
-1.4\\
0.25
\end{bmatrix},
\qquad
\lambda_0=
\begin{bmatrix}
0.1\\
0.1
\end{bmatrix},
\qquad
\mu_0=0.
\]

The objective surface and optimisation trajectories are shown in Fig.~\ref{fig:problem3_3d}, while Fig.~\ref{fig:problem3_trajectory} illustrates the annular feasible region, equality manifold, and optimisation paths followed by the competing algorithms.

\subsubsection{Results and Discussion}

The numerical results for the banana objective with annular feasible set are
summarised in Table~\ref{tab:problem_banana_results}. The three-dimensional
objective surface and optimisation trajectories are shown in
Fig.~\ref{fig:problem3_3d}, while the corresponding two-dimensional constraint
geometry and optimisation trajectories are shown in
Fig.~\ref{fig:problem3_trajectory}. The KKT residual histories and convergence
metrics are presented in Figs.~\ref{fig:problem3_residuals}
and~\ref{fig:problem3_metrics}, respectively.

As illustrated in Fig.~\ref{fig:problem3_3d}, the optimisation algorithms
evolve over a highly nonconvex banana-shaped objective landscape while
simultaneously satisfying the constraints defining the feasible set. The
proposed SM--KKT solver progresses towards the reference constrained optimum
along a stable and well-behaved trajectory.
\begin{figure}[thpb]
\centering
\includegraphics[width=\linewidth]{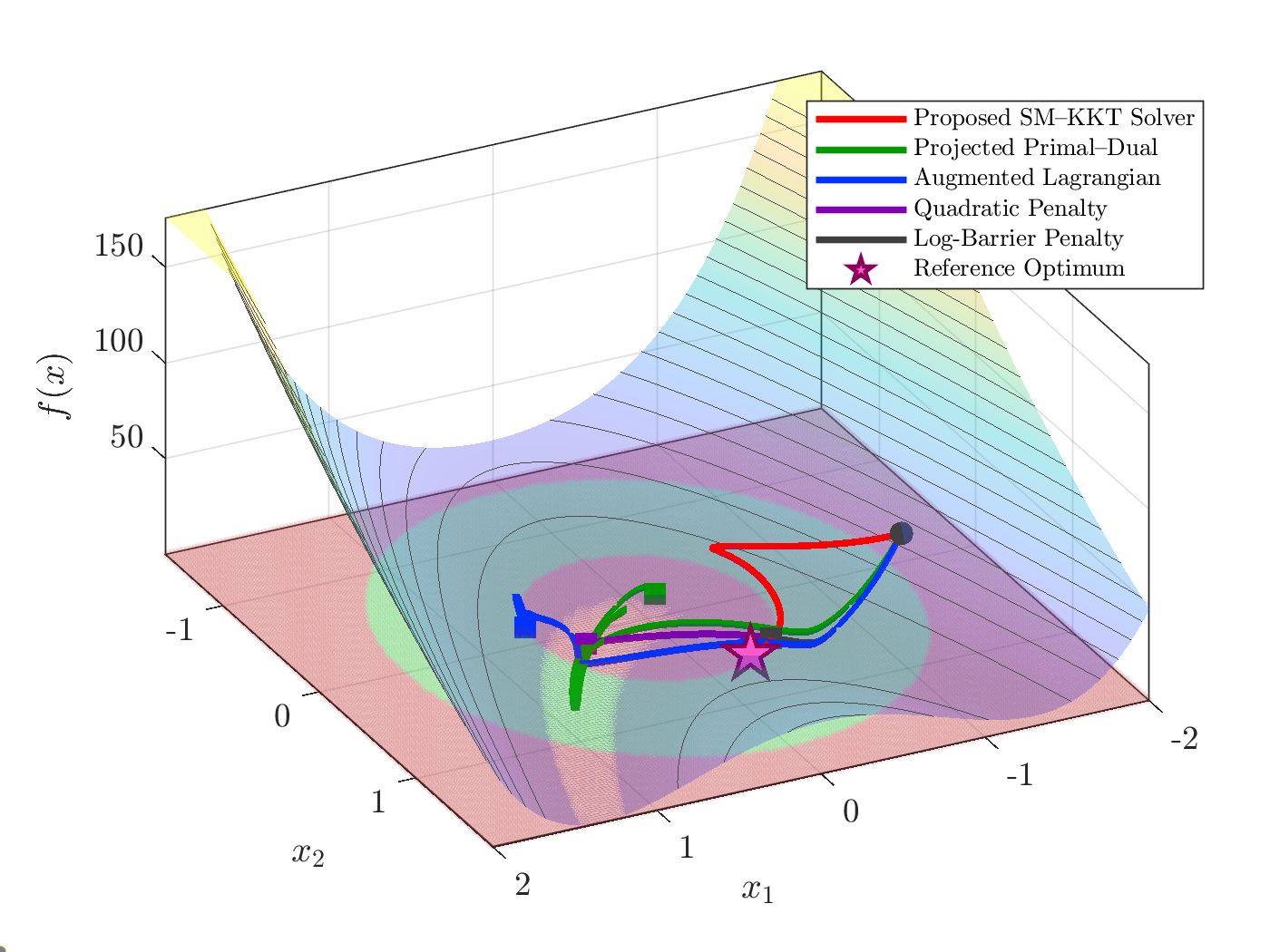}
\caption{Problem III: three-dimensional banana-shaped objective surface
and optimisation trajectories. The proposed SM--KKT solver follows a
well-behaved path towards the reference constrained optimum despite the
nonconvex objective landscape and geometrically restrictive annular
constraint geometry.}
\label{fig:problem3_3d}
\end{figure}
The corresponding two-dimensional representation in
Fig.~\ref{fig:problem3_trajectory} makes the constraint geometry more explicit.
The annular feasible region, together with the equality constraint, creates a
geometrically restrictive domain in which the optimisation trajectory must
navigate the nonconvex objective landscape while maintaining convergence
towards the constrained solution. In this setting, the SM--KKT trajectory
follows a well-behaved path towards the reference constrained optimum.
\begin{figure}[thpb]
\centering
\includegraphics[width=\linewidth]{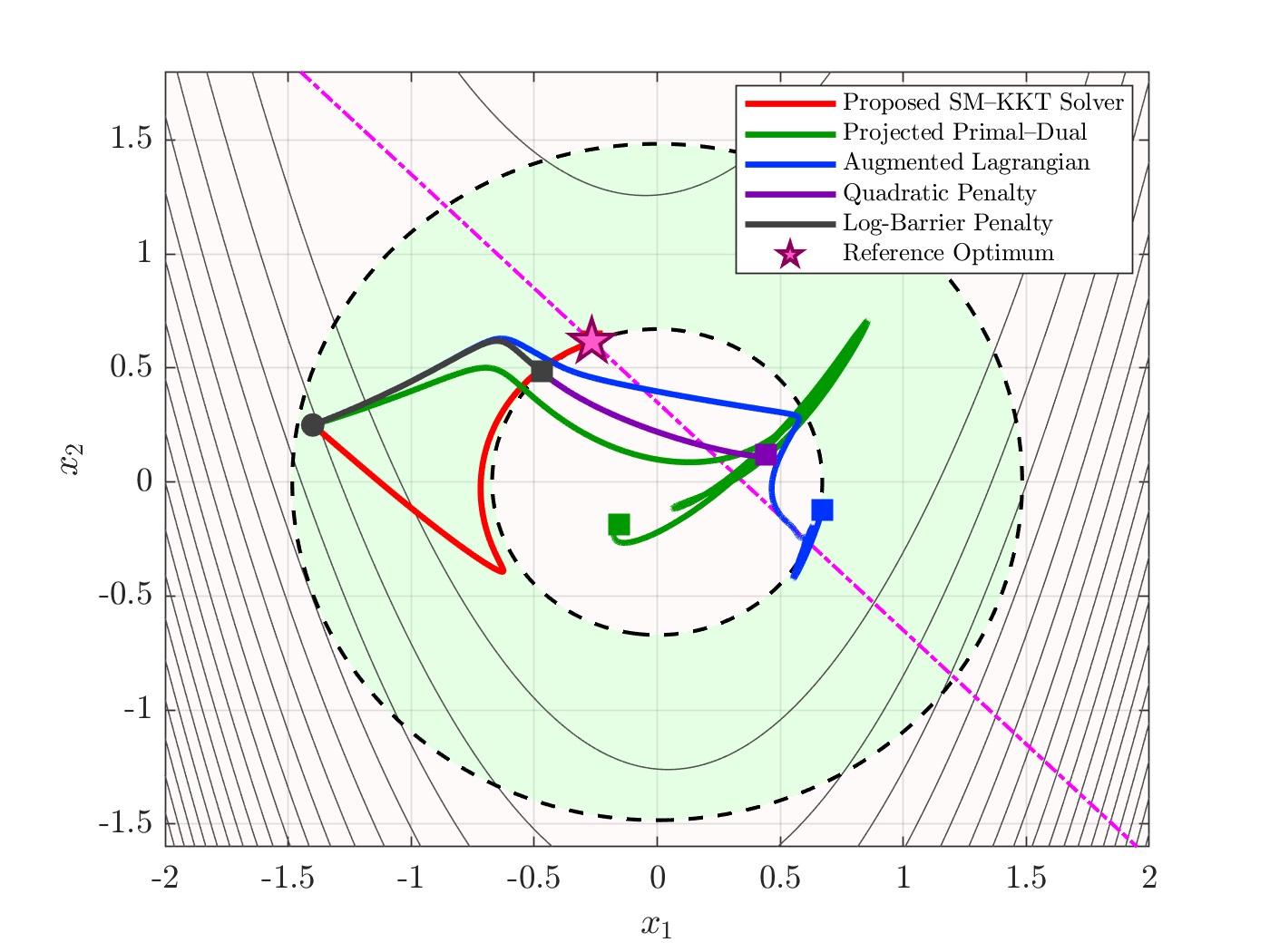}
\caption{Problem III: objective contours (\textcolor{darkgray}{dark grey}), annular feasible region (\textcolor{green!60!black}{green}), infeasible region (\textcolor{pink!85!black}{light pink}), inequality constraint boundaries (black dashed), equality constraint (\textcolor{magenta}{magenta dashed}), and optimisation trajectories. The proposed SM–KKT solver follows a well-behaved path through the geometrically restrictive annular feasible set towards the reference constrained optimum, despite the nonconvex objective landscape.}
\label{fig:problem3_trajectory}
\end{figure}
Quantitatively, Table~\ref{tab:problem_banana_results} shows that SM--KKT is
the only method that reaches the prescribed KKT tolerance. The proposed method
converges in \(960\) iterations and achieves a final KKT residual of
\[
\|S\| = 9.529\times10^{-7}.
\]
Furthermore, the residual enters the prescribed sliding boundary layer after
\(867\) iterations. At termination, the stationarity, equality-feasibility,
and complementarity residuals are
\[
\|S^x\| = 9.52\times10^{-7},~
\|S^h\| = 2.40\times10^{-14},~
\|S^c\| = 4.53\times10^{-14},
\]
respectively. These values show that stationarity, equality feasibility, and complementarity are regulated simultaneously. Since the terminal overall
residual is essentially determined by the stationarity component, while the equality-feasibility and complementarity residuals are substantially smaller,
the final numerical accuracy is primarily limited by stationarity. The objective error of at most \(10^{-18}\), reported in Table~\ref{tab:problem_banana_results}, further confirms accurate recovery of the reference constrained optimum. This behaviour is also evident in Fig.~\ref{fig:problem3_residuals}. The
SM--KKT solver drives all components of the KKT residual towards the
prescribed tolerance while maintaining bounded behaviour throughout the
optimisation process. In contrast, the competing methods exhibit persistent
stationarity, feasibility, or complementarity errors and fail to achieve
comparable satisfaction of the complete KKT conditions.

\begin{figure*}[thpb]
\centering
\includegraphics[width=\linewidth]{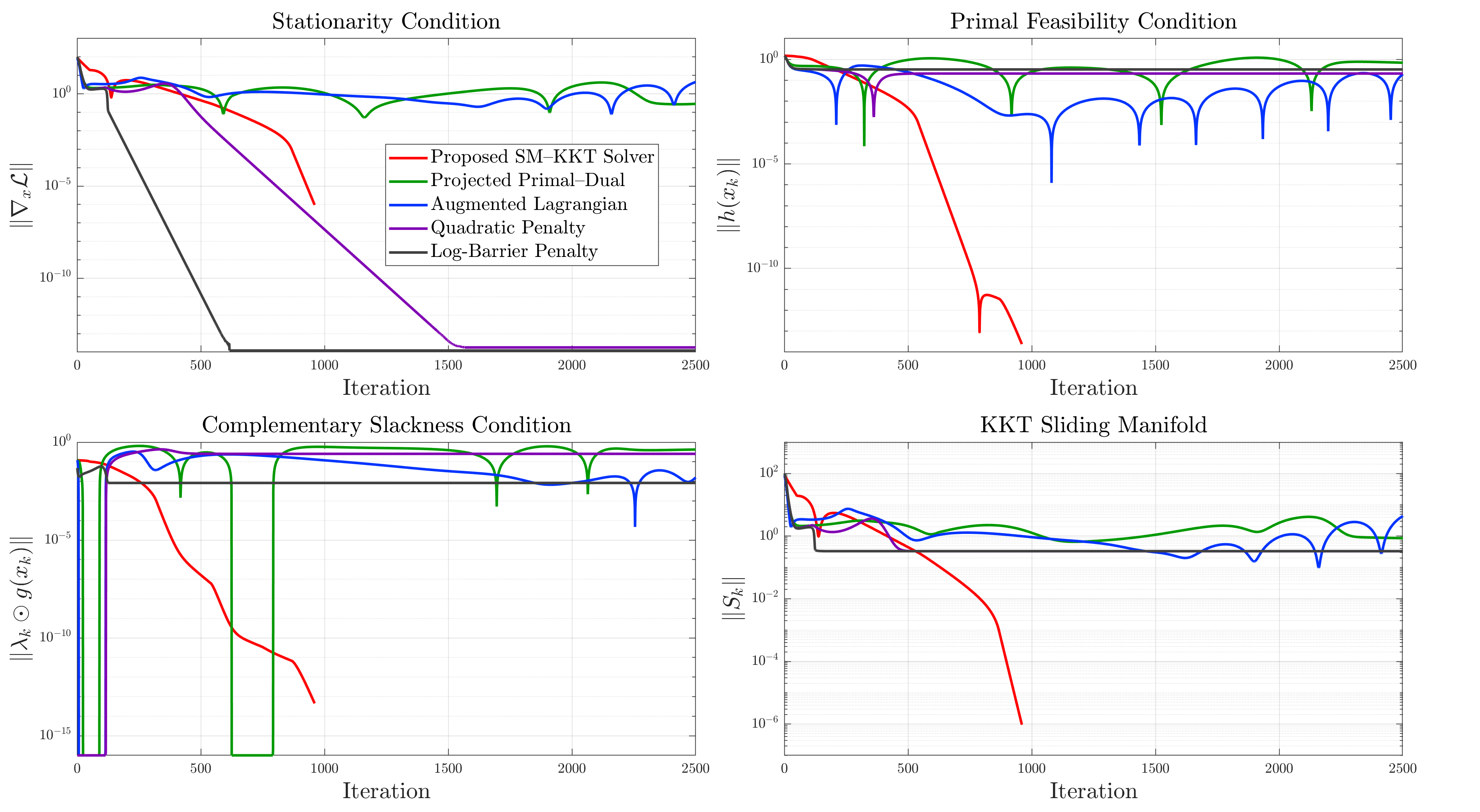}
\caption{Problem III: comparative evolution of the stationarity,
equality-feasibility, complementarity, and overall KKT residuals.
The proposed SM--KKT solver drives all residual components simultaneously
below the prescribed tolerance despite the restrictive annular constraint
geometry, whereas the competing methods exhibit persistent stationarity,
feasibility, or complementarity errors.}
\label{fig:problem3_residuals}
\end{figure*}

The projected primal--dual method performs poorly on this benchmark. It
terminates with a final KKT residual of $\|S\| = 8.612\times10^{-1},$
and an objective error of \(2.274\). As illustrated in
Fig.~\ref{fig:problem3_trajectory}, its trajectory undergoes substantial
excursions through the annular region before termination. This behaviour is
also reflected in its cumulative primal path length of \(6.473\), which is
the largest among the compared methods. The Augmented Lagrangian method exhibits an even larger terminal KKT residual.
Its final stationarity residual reaches $\|S^x\| = 4.392,$ resulting in an overall KKT residual of
$\|S\| = 4.443.$ The method therefore does not approach the reference KKT point to the
prescribed accuracy. Its objective error of \(1.176\) further indicates a
substantial discrepancy from the reference constrained optimum. The quadratic-penalty and log-barrier methods provide intermediate results,
but neither reaches the prescribed KKT tolerance. Their final KKT residuals
are \(3.311\times10^{-1}\) and \(3.318\times10^{-1}\), respectively, with
corresponding objective errors of \(3.538\) and \(1.267\). These results
reinforce the importance of assessing convergence through the complete KKT
residual rather than through objective reduction or trajectory behaviour
alone.

Figure~\ref{fig:problem3_metrics} provides a complementary comparison through
the multiplier errors, objective error, and cumulative primal trajectory
length. The SM--KKT solver combines accurate multiplier recovery and objective
convergence with a cumulative primal path length of \(2.177\). By comparison,
the projected primal--dual and Augmented Lagrangian methods produce path
lengths of \(6.473\) and \(3.471\), respectively. These results are consistent
with the stable and comparatively direct SM--KKT trajectory observed in
Figs.~\ref{fig:problem3_3d} and~\ref{fig:problem3_trajectory}.
\begin{figure*}[thpb]
\centering
\includegraphics[width=\linewidth]{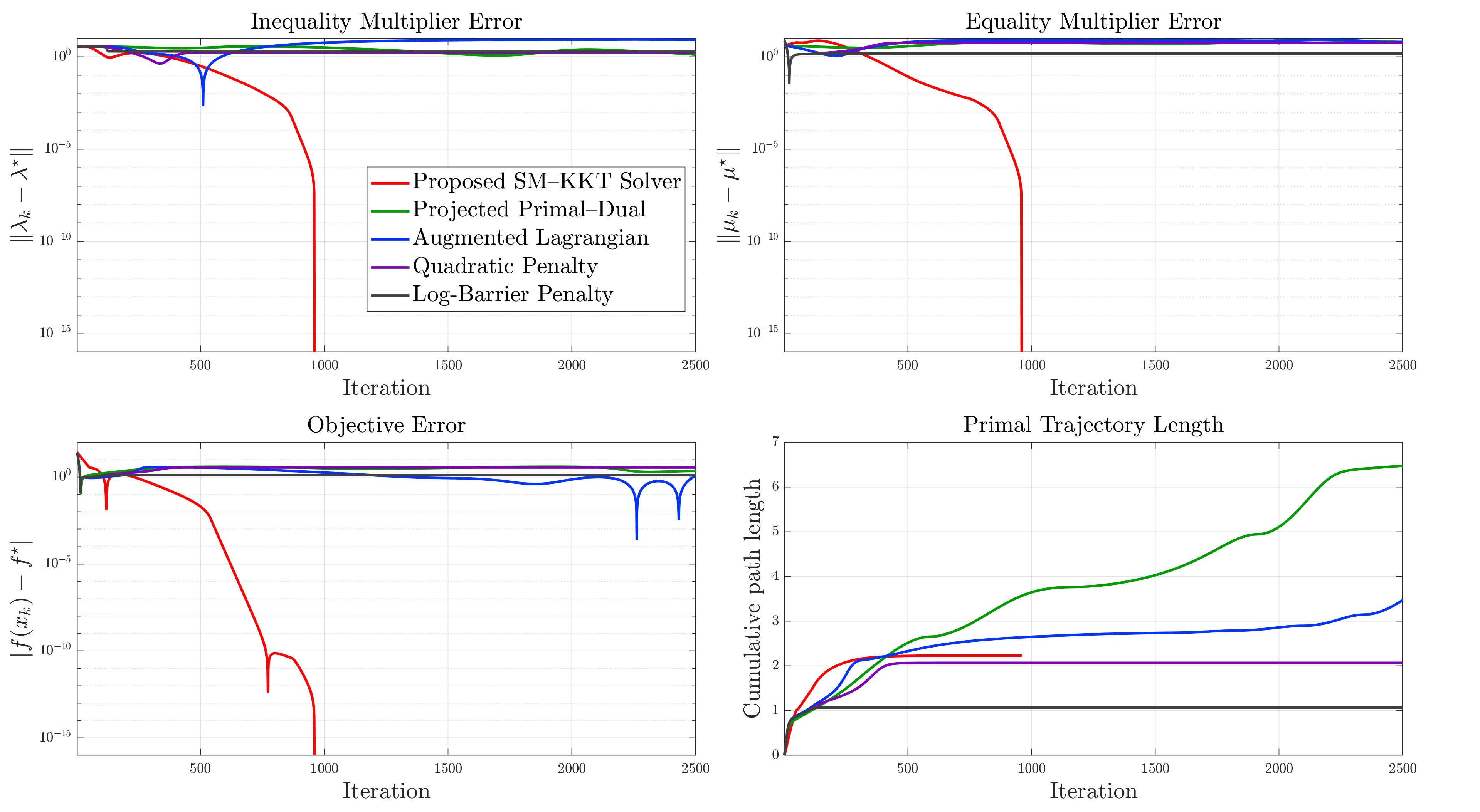}
\caption{Problem III: evolution of multiplier errors, objective error,
and cumulative primal trajectory length. The proposed SM--KKT solver
achieves accurate multiplier recovery and objective convergence while
following an efficient path through the restrictive annular geometry,
whereas the competing methods exhibit persistent multiplier errors or
less accurate convergence.}
\label{fig:problem3_metrics}
\end{figure*} As in the previous benchmarks, path length must be interpreted jointly with
terminal accuracy. The quadratic-penalty and log-barrier methods have shorter
reported paths of \(2.068\) and \(1.070\), respectively, compared with
\(2.177\) for SM--KKT. However, they terminate with KKT residuals of
\(3.311\times10^{-1}\) and \(3.318\times10^{-1}\), respectively, together
with substantially larger objective errors. Their shorter trajectories
therefore do not represent more efficient routes to the constrained optimum.
A meaningful comparison of trajectory efficiency must account for the
accuracy of the terminal KKT point.

Taken together, Figs.~\ref{fig:problem3_3d},
\ref{fig:problem3_trajectory}, \ref{fig:problem3_residuals},
and~\ref{fig:problem3_metrics}, together with
Table~\ref{tab:problem_banana_results}, demonstrate the overall advantage of
SM--KKT on this benchmark. Despite the strongly nonconvex objective and the
restrictive annular constraint geometry, SM--KKT is the only tested method
that reaches the prescribed KKT tolerance while accurately recovering the
reference constrained optimum. In particular, the simultaneous reduction of
the stationarity, equality-feasibility, and complementarity residuals,
together with accurate multiplier recovery and a well-behaved primal
trajectory, indicates convergence to the complete KKT solution rather than
improvement in the objective value alone. These results highlight the
effectiveness and robustness of direct KKT-residual regulation when both
objective nonconvexity and complex feasible-set geometry influence the
optimisation dynamics.
\begin{table}[thpb]
\rowcolors{2}{gray!15}{white}
\centering
\caption{Problem III numerical comparison. The proposed SM–KKT solver achieves the best overall performance, reaching the prescribed KKT tolerance while accurately recovering the reference constrained optimum and maintaining an efficient primal trajectory.}
\label{tab:problem_banana_results}
\begin{tabular}{lcccc}
\hline
Method & Phi Enter & \(|f(x)-f^\star|\) & \(\|S_k\|\) & Path \\
\hline
SM--KKT & 867 & \( \leq 10^{-18} \) & \(9.529\times10^{-7}\) & \(2.177\)\\
Projected PD & -- & \(2.274\times10^{0}\) & \(8.612\times10^{-1}\) & \(6.473\)\\
Aug. Lag. & -- & \(1.176\times10^{0}\) & \(4.443\times10^{0}\) & \(3.471\)\\
Quad. Penalty & -- & \(3.538\times10^{0}\) & \(3.311\times10^{-1}\) & \(2.068\)\\
Log-Barrier & -- & \(1.267\times10^{0}\) & \(3.318\times10^{-1}\) & \(1.070\)\\
\hline
\end{tabular}
\end{table}
\subsection{\color{blue}Problem IV: Trigonometric Nonlinear Constrained Optimisation}
\label{subsec:problem_trigonometric}

The fourth benchmark considers a highly nonlinear constrained optimisation problem with a trigonometric objective term, nonlinear inequality constraints, and a nonlinear equality constraint. This example is included to assess the behaviour of the proposed SM--KKT solver in the presence of oscillatory objective geometry and nonlinear feasibility manifolds. Unlike the previous examples, the equality constraint itself is nonlinear, which creates a curved feasible manifold and increases the difficulty of simultaneously satisfying stationarity, feasibility, and complementarity. The optimisation problem cost function  is: 
\[
f(x)
=
0.6(x_1-0.5)^2
+
0.8(x_2-0.8)^2
+
0.18\sin(4x_1)\cos(3x_2).
\]

The inequality constraints ($g(x)\leq0$) are
\begin{align*}
 g_1(x)&=x_1^2+x_2^2-1.3,
\\
g_2(x)&=-x_1-0.9,
\\
g_3(x)&=x_1+0.4x_2-0.8.
\end{align*}
The nonlinear equality $(h(x)=0)$ constraint is
\[
h(x)=\sin(0.7x_1)+x_2-0.25.
\]

All methods are initialized from
\[
x_0=
\begin{bmatrix}
-1.2\\
-0.7
\end{bmatrix},
\qquad
\lambda_0=
\begin{bmatrix}
0.1\\
0.1\\
0.1
\end{bmatrix},
\qquad
\mu_0=0.
\]

The feasible set is formed by the intersection of a circular inequality, two affine inequality constraints, and a nonlinear equality manifold. The trigonometric perturbation in the objective generates local oscillations in the descent direction, while the nonlinear equality constraint imposes a curved manifold that must be tracked accurately. This benchmark therefore provides a stringent test of robustness for the proposed residual-manifold formulation.

\subsubsection{Results and Discussion}
The numerical results for the trigonometric nonlinear constrained problem are summarised in Table~\ref{tab:problem_trig_results}. The three-dimensional
objective surface and optimisation trajectories are shown in
Fig.~\ref{fig:problem4_3d}, while the corresponding two-dimensional constraint
geometry and optimisation trajectories are shown in
Fig.~\ref{fig:problem4_trajectory}. The KKT residual histories and convergence
metrics are presented in Figs.~\ref{fig:problem4_residuals}
and~\ref{fig:problem4_metrics}, respectively.

As illustrated in Fig.~\ref{fig:problem4_3d}, this benchmark combines an
oscillatory trigonometric objective landscape with nonlinear constraint
geometry. The curved equality manifold further restricts the admissible
solution set, requiring the optimisation process to reduce the objective while
simultaneously satisfying the coupled KKT conditions. Despite this geometry,
the proposed SM--KKT solver follows a stable trajectory towards the reference
constrained optimum.
\begin{figure}[thpb]
\centering
\includegraphics[width=\linewidth]{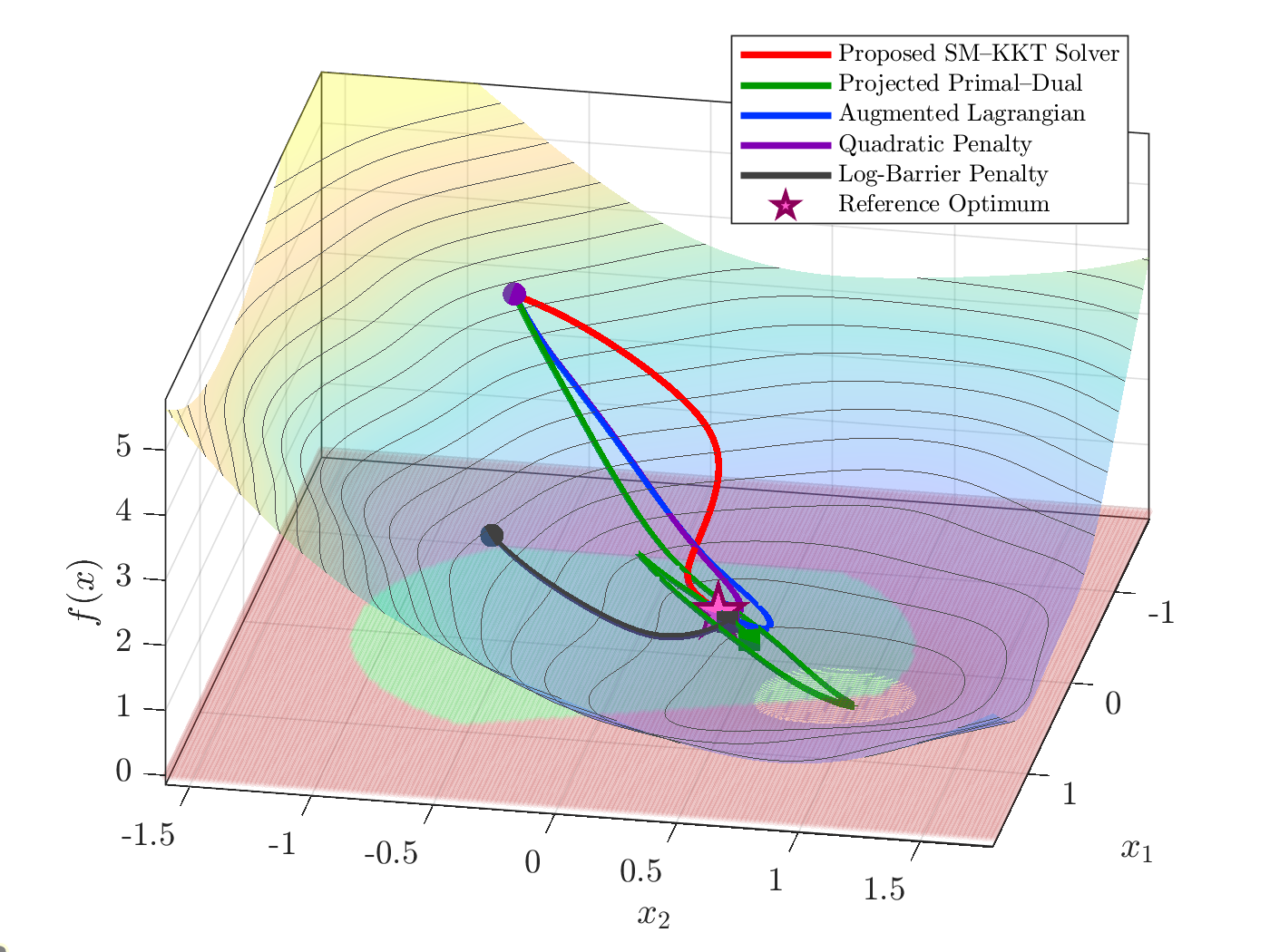}
\caption{Problem IV: three-dimensional trigonometric objective surface
and optimisation trajectories. The proposed SM--KKT solver converges
efficiently towards the reference constrained optimum despite the
oscillatory objective landscape and curved nonlinear equality manifold.}
\label{fig:problem4_3d}
\end{figure}
The corresponding two-dimensional representation in
Fig.~\ref{fig:problem4_trajectory} makes the interaction between the objective
contours and nonlinear constraint geometry more explicit. The SM--KKTtrajectory progresses towards the reference constrained optimum while
remaining well behaved in the presence of the curved equality manifold andinequality boundaries.
\begin{figure}[thpb]
\centering
\includegraphics[width=\linewidth]{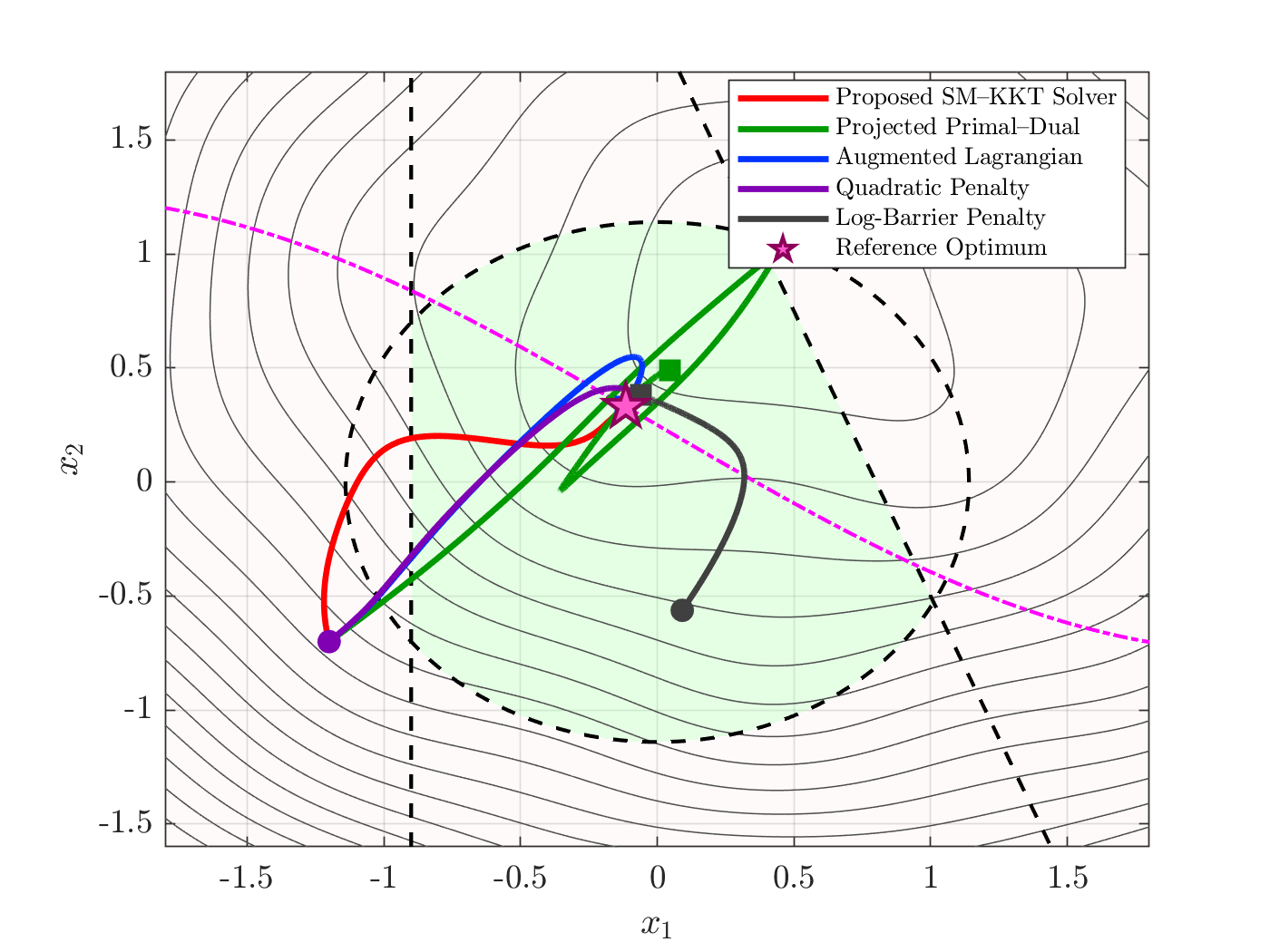}
\caption{Problem IV: objective contours (\textcolor{darkgray}{dark grey}), feasible region (\textcolor{green!60!black}{green}),
infeasible region (\textcolor{pink!85!black}{light pink}), inequality constraint boundaries (black dashed),
nonlinear equality constraint (\textcolor{magenta}{magenta dashed}), and optimisation
trajectories. The proposed SM--KKT solver follows a well-behaved path
towards the reference constrained optimum despite the oscillatory
objective landscape and curved nonlinear equality manifold.}
\label{fig:problem4_trajectory}
\end{figure}
Quantitatively, Table~\ref{tab:problem_trig_results} shows that SM--KKT
achieves the fastest convergence to the prescribed KKT tolerance. The method
reaches a final KKT residual of
\[
\|S\| = 9.871\times10^{-7},
\]
and enters the prescribed sliding boundary layer after only \(501\)
iterations. At termination, the stationarity, equality-feasibility, and complementarity
residuals are
\[
\|S^x\| = 9.84\times10^{-7},~
\|S^h\| = 7.34\times10^{-8}, ~
\|S^c\| = 1.03\times10^{-8},
\]
respectively. Thus, all three components of the KKT residual are reduced tosmall values simultaneously. The stationarity component remains the dominant
contribution to the terminal residual, while equality feasibility and
complementarity are regulated to still smaller levels. The objective error of
at most \(10^{-18}\), reported in Table~\ref{tab:problem_trig_results},
further confirms accurate recovery of the reference constrained optimum. The residual histories in Fig.~\ref{fig:problem4_residuals} provide a direct
view of this convergence behaviour. SM--KKT drives the stationarity,
equality-feasibility, and complementarity residuals towards the prescribed
tolerance while reaching the boundary layer substantially earlier than the
other successful method. The competing approaches either converge more slowly
or terminate with appreciably larger residual errors.

\begin{figure*}[thpb]
\centering
\includegraphics[width=\linewidth]{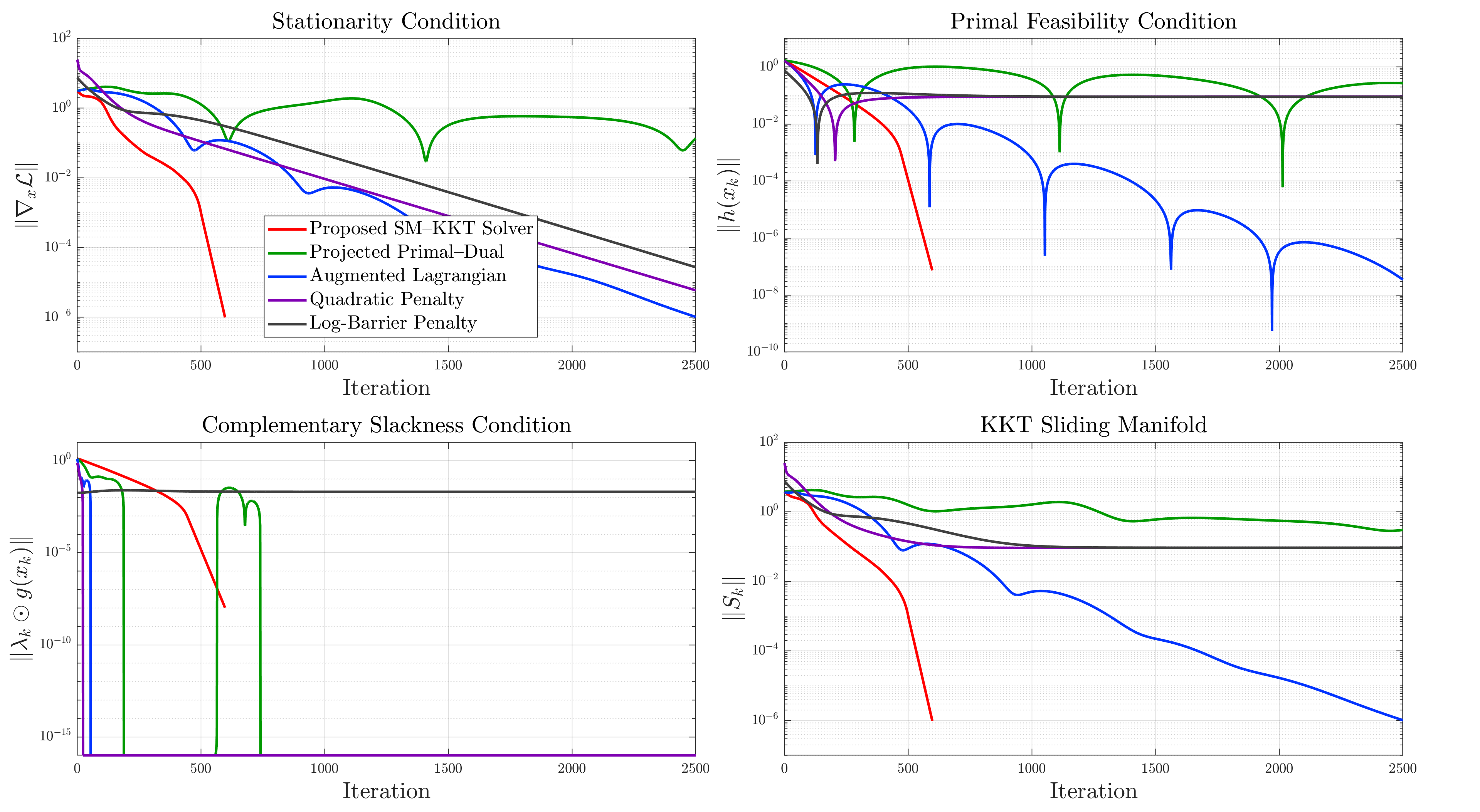}
\caption{Problem IV: comparative evolution of the stationarity,
equality-feasibility, complementarity, and overall KKT residuals.
The proposed SM--KKT solver drives all residual components to the
prescribed tolerance despite the nonlinear constraint geometry, whereas
the competing methods exhibit slower convergence or persistent residual
errors.}
\label{fig:problem4_residuals}
\end{figure*}

Among the competing methods, the Augmented Lagrangian method provides the
closest result. It ultimately reaches a final KKT residual of $\|S\| = 1.015\times10^{-6},$ and, like SM--KKT, attains an objective error of at most \(10^{-18}\).
However, it enters the prescribed boundary layer only after \(1315\) iterations, compared with \(501\) iterations for SM--KKT. The proposed method therefore reaches the required residual neighbourhood considerably earlier, while also producing a slightly shorter cumulative primal trajectory of \(1.893\), compared with \(1.951\) for the Augmented Lagrangian method.
The projected primal--dual method exhibits substantially poorer terminal accuracy, with a final KKT residual of $\|S\| = 3.036\times10^{-1},$ and an objective error of \(1.545\times10^{-1}\). Its cumulative primal path length of \(4.325\) is also the largest among the compared methods, indicating a considerably less efficient trajectory towards the constrained solution. The quadratic-penalty and log-barrier methods provide intermediate terminal
residuals of \(9.181\times10^{-2}\) and \(9.272\times10^{-2}\),
respectively. Their corresponding objective errors are
\(5.026\times10^{-2}\) and \(4.953\times10^{-2}\). Neither method therefore
reaches the prescribed KKT tolerance. These results again illustrate that
trajectory length or objective reduction alone is insufficient for assessing
convergence in a constrained problem; the complete KKT residual must also be
considered.

Figure~\ref{fig:problem4_metrics} provides a complementary comparison through
the multiplier errors, objective error, and cumulative primal trajectory
length. The proposed SM--KKT solver achieves accurate multiplier recovery and
objective convergence while maintaining an efficient primal trajectory. The
Augmented Lagrangian method eventually attains comparable terminal accuracy
but requires substantially more iterations before entering the prescribed
boundary layer, whereas the remaining methods retain larger terminal errors.
\begin{figure*}[thpb]
\centering
\includegraphics[width=\linewidth]{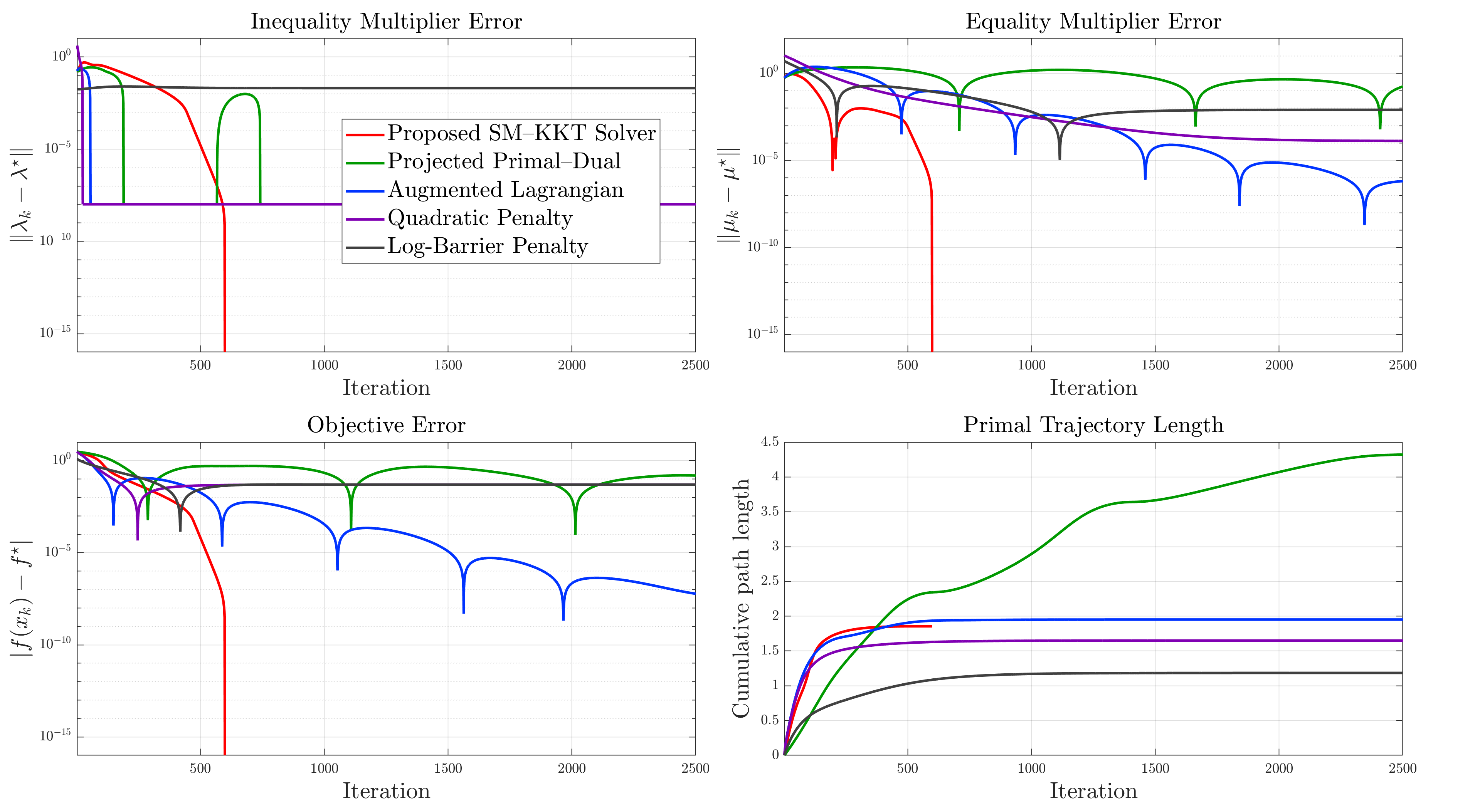}
\caption{Problem IV: evolution of multiplier errors, objective error,
and cumulative primal trajectory length. The proposed SM--KKT solver
achieves accurate multiplier recovery and objective convergence while
following an efficient path towards the constrained optimum, whereas
the competing methods exhibit slower or less accurate convergence.}
\label{fig:problem4_metrics}
\end{figure*}
As in the preceding benchmarks, path length must be interpreted jointly with
terminal accuracy. The quadratic-penalty and log-barrier methods have shorter
reported paths of \(1.650\) and \(1.185\), respectively, compared with
\(1.893\) for SM--KKT. However, their terminal KKT residuals remain on the
order of \(10^{-2}\), whereas SM--KKT reaches the prescribed \(10^{-6}\)
tolerance. Their shorter trajectories therefore do not represent more
efficient convergence to the reference constrained optimum.

Taken together, Figs.~\ref{fig:problem4_3d},
\ref{fig:problem4_trajectory}, \ref{fig:problem4_residuals},
and~\ref{fig:problem4_metrics}, together with
Table~\ref{tab:problem_trig_results}, demonstrate the convergence advantage
of SM--KKT on this benchmark. Despite the oscillatory objective landscape and
curved nonlinear equality manifold, the method accurately recovers the
reference constrained optimum while simultaneously reducing stationarity,
equality-feasibility, and complementarity residuals to the prescribed
accuracy. Although the Augmented Lagrangian method ultimately attains a
comparable terminal KKT residual and objective accuracy, SM--KKT enters the
prescribed boundary layer substantially earlier and follows a slightly shorter
primal trajectory. The results therefore demonstrate that direct KKT-residual
regulation can provide both high terminal accuracy and rapid convergence in
the presence of strongly nonlinear objective and constraint geometry.

\begin{table}[thpb]
\centering
\rowcolors{2}{gray!15}{white}
\caption{Problem IV numerical comparison. The proposed SM–KKT solver achieves the fastest convergence, reaching the prescribed KKT tolerance while accurately satisfying the nonlinear constraints and maintaining an efficient primal trajectory.}
\label{tab:problem_trig_results}
\begin{tabular}{lcccc}
\hline
Method & Phi Enter & \(|f(x)-f^\star|\) & \(\|S_k\|\) & Path \\
\hline
SM--KKT & 501 & \( \leq 10^{-18} \) & \(9.871\times10^{-7}\) & \(1.893\)\\
Projected PD & -- & \(1.545\times10^{-1}\) & \(3.036\times10^{-1}\) & \(4.325\)\\
Aug. Lag. & 1315 & \( \leq 10^{-18} \) & \(1.015\times10^{-6}\) & \(1.951\)\\
Quad. Penalty & -- & \(5.026\times10^{-2}\) & \(9.181\times10^{-2}\) & \(1.650\)\\
Log-Barrier & -- & \(4.953\times10^{-2}\) & \(9.272\times10^{-2}\) & \(1.185\)\\
\hline
\end{tabular}
\end{table}
\subsection{Overall Discussion}

The four benchmark problems collectively demonstrate the
effectiveness of the proposed projection-free SM--KKT framework
across diverse nonlinear constrained optimisation scenarios,
including nonconvex objectives, Rosenbrock-type valleys,
annular feasible regions with active-set transitions, and
nonlinear equality manifolds with oscillatory objectives.
The results provide numerical evidence that direct regulation
of the unified KKT residual enables simultaneous treatment
of stationarity, primal and dual feasibility, and
complementarity. Across the considered benchmarks, SM--KKT
recovers the reference constrained solutions with high
numerical accuracy and achieves the smallest or comparably
small final KKT residuals. Furthermore, it is the only
tested implementation that consistently reaches the
prescribed residual tolerance and enters the residual
boundary layer in all four problems. Among the competing implementations, the augmented-Lagrangian
method generally exhibits the closest performance, attaining
comparable final accuracy in certain benchmarks. However,
under the numerical settings considered, it requires
substantially more iterations to enter the prescribed
residual boundary layer and exhausts the maximum iteration
budget. The projected primal--dual, quadratic-penalty,
and log-barrier implementations exhibit weaker overall
KKT accuracy in the present comparisons, with some
terminating before sufficiently reducing the complete
KKT residual. These observations do not imply inherent
limitations in the convergence capabilities of these
established methods, but highlight the differences in
their performance under the adopted benchmark conditions.

An important observation is that reduction of the
objective function alone does not establish convergence
to a constrained optimum. Accurate assessment also
requires satisfaction of the original constraints,
stationarity, dual feasibility, and complementarity.
The proposed SM--KKT framework addresses these conditions
through direct regulation of a unified residual system.
Complementarity is incorporated using the
Fischer--Burmeister residual, while the primal--dual
correction is computed without explicit projection
operations, penalty continuation, or barrier parameter
scheduling. This formulation offers a direct mechanism
for controlling KKT accuracy and provides a plausible
explanation for the favourable numerical behaviour
observed across the considered problems.

Table~\ref{tab:method_comparison} summarises the main
structural differences between the proposed approach
and the competing methods. The numerical results
indicate that SM--KKT combines accurate recovery of
the reference constrained optima with reliable
attainment of the prescribed residual tolerance
and favourable iteration-based convergence behaviour.
These observations complement the Jacobian
characterisation and LICQ/SOSC-based analysis
developed in the main manuscript. Under the
corresponding regularity, convergence, and
second-order sufficient conditions, a limiting
KKT point satisfying these assumptions is a
strict local minimiser. Overall, the findings
support the proposed framework as a promising
projection-free alternative for nonlinear
constrained optimisation, while further
large-scale testing and computational-time
comparisons remain necessary to establish
broader performance advantages.
\begin{table*}[t]
\centering
\caption{Qualitative comparison with representative methods. The proposed SM–KKT framework uniquely combines projection-free operation, direct KKT residual regulation, low tuning requirements, strong nonconvex capability, and rejection of non-optimal KKT points.}
\rowcolors{2}{gray!15}{white}
\label{tab:method_comparison}
\scriptsize
\setlength{\tabcolsep}{12pt}
\renewcommand{\arraystretch}{1.25}
\begin{tabular}{lcccccc}
\hline
Method & Proj.-free & Direct KKT & Comp. & Tuning & Noncvx & Rejects non-optimal KKT \\
\hline
SM--KKT & Yes & Yes & FB & Low & High & Yes \\
Projected PD & No & Partial & Projection & Low & Medium & No \\
Aug. Lag. & Yes & Indirect & Multiplier & High & High & No \\
Quad. Penalty & Yes & No & Penalty & High & Low & No \\
Log-Barrier & Yes & No & Barrier & High & Medium & No \\
Interior-Point & Yes & Indirect & Barrier & High & High & No \\
SQP & Yes & Approx. & QP-KKT & Medium & High & No \\
ADMM & Yes & No & Splitting & High & Medium & No \\
\hline
\end{tabular}
\end{table*}

The corresponding videos demonstrating the simulation results are available online at \href{https://www.youtube.com/playlist?list=PLHnMyAFhfD94}{[Video demonstrations]} \footnote{In case of difficulty accessing the embedded video link, the complete set of simulation videos is also available at:\\
\url{https://www.youtube.com/playlist?list=PLHnMyAFhfD94}.}.

\section{Conclusion}
\label{sec:conclusion}

This paper proposed a projection-free SM–KKT sliding-mode solver for NCO based on a unified KKT residual formulation. By combining stationarity, feasibility, and complementarity within a single residual system through a regularised Fischer–Burmeister mapping, the proposed framework eliminates the need for multiplier projection and penalty-based feasibility restoration. A residual-driven sliding dynamics together with a structured correction mechanism was developed to directly regulate the KKT residual manifold and guide the primal–dual trajectory towards optimality in finite time to a prescribed residual boundary layer.

From a theoretical perspective, a direct connection was established between the geometry of the residual Jacobian and classical nonlinear programming theory. In particular, it was shown that the residual Jacobian possesses inertia ((n,m+q,0)) if and only if LICQ and SOSC hold. Consequently, convergence of the proposed residual manifold to zero is associated with genuine locally optimal KKT points rather than merely feasible or stationary solutions. This result provides a new interpretation of constrained optimisation as a residual regulation problem and supplies a direct criterion linking the dynamics of the proposed system to classical optimality theory.

The numerical experiments demonstrated the effectiveness of the proposed framework across a diverse set of NCO problems, including nonconvex objectives, Rosenbrock-type valleys, annular feasible regions, and highly nonlinear trigonometric constraints. The proposed SM–KKT solver achieved the smallest final KKT residual in all four benchmarks, recovered the reference optima with essentially zero objective error, and attained KKT residuals below ($10^{-6}$) in every case. Moreover, it was the only method that successfully entered the prescribed residual boundary layer and reached the stopping tolerance in all benchmark problems, doing so substantially earlier than the competing methods. These results demonstrate that direct regulation of the complete KKT residual provides an effective and robust projection-free approach for nonlinear constrained optimization.

\appendix
\subsection{Proof of Lemma~\ref{lem:convexity_sc}} \label{appendix1}
Let $R_i$, $a_i$ and $G_i$ defined by: 
\[
R_i=\sqrt{\lambda_i^2+g_i(x)^2+\rho_c},~
a_i=\nabla g_i(x),
~
G_i=\nabla^2 g_i(x).
\]
The Hessian of \(S_i^c\) with respect to $y_i=
\begin{bmatrix}
x&
\lambda_i
\end{bmatrix}^T$ is
\[
\mathcal H_i^c
=
\begin{bmatrix}
D_i^gG_i+
\dfrac{\lambda_i^2+\rho_c}{R_i^3}a_ia_i^\top
&
-\dfrac{g_i\lambda_i}{R_i^3}a_i
\\[2mm]
-\dfrac{g_i\lambda_i}{R_i^3}a_i^\top
&
\dfrac{g_i^2+\rho_c}{R_i^3}
\end{bmatrix},
\]
where
\[
D_i^g=1+\frac{g_i(x)}{R_i}.
\]
Since \(\rho_c>0\), one has
\[
R_i>|g_i(x)| \implies D_i^g>0.
\]
Moreover,
\[
\dfrac{g_i^2+\rho_c}{R_i^3}>0.
\]

Using the Schur complement with respect to the lower-right block, it is sufficient to show
\[
D_i^gG_i+
\left[
\frac{\lambda_i^2+\rho_c}{R_i^3}
-
\frac{g_i^2\lambda_i^2}{R_i^3(g_i^2+\rho_c)}
\right]
a_ia_i^\top
\succeq0.
\]
The scalar coefficient satisfies
\[
\frac{\lambda_i^2+\rho_c}{R_i^3}
-
\frac{g_i^2\lambda_i^2}{R_i^3(g_i^2+\rho_c)}
=
\frac{\rho_c R_i^2}{R_i^3(g_i^2+\rho_c)}
=
\frac{\rho_c}{R_i(g_i^2+\rho_c)}
>0.
\]
Since \(g_i\) is convex,
\[
G_i=\nabla^2g_i(x)\succeq0.
\]
Thus

\[
D_i^gG_i\succeq0, ~~\frac{\rho_c}{R_i(g_i^2+\rho_c)}
a_ia_i^\top\succeq0.
\]
Hence the Schur complement is positive semidefinite, and therefore
\[
\mathcal H_i^c\succeq0.
\]
Consequently, \(S_i^c(x,\lambda_i)\) is convex in \((x,\lambda_i)\).
\subsection{Proof of Proposition~\ref{prop:kkt_equiv_s_zero}} \label{appendix2}
 Assume first that $S(z^\star)=0$, it follows that
\[
S^x(z^\star)=0 \implies \nabla_x
\mathcal L
(x^\star,\lambda^\star,\mu^\star)
=
0.
\]
Also, 
\[
S^h(z^\star)=0 \implies h(x^\star)=0.
\]

Finally, using the Fischer--Burmeister property in Lemma \ref{def:FB}
\[
S^c(z^\star)=-\phi_{\mathrm{FB}}(\lambda^*,-g^*(x))=0
\]
implies
\[
\lambda^\star\ge0,
\qquad
g(x^\star)\le0,
\qquad
\lambda^\star\odot g(x^\star)=0.
\]

Therefore, all KKT conditions \eqref{eq:kkt} are satisfied. Conversely, suppose that $(x^\star,\lambda^\star,\mu^\star)$
satisfies the KKT conditions \eqref{eq:kkt}. The stationarity condition gives
\[
\nabla_x\mathcal L(x^\star,\lambda^\star,\mu^\star)=0 \implies S^x(z^\star)=0,
\]
the equality constraint gives 
\[
h(x^\star)=0 \implies  S^h(z^\star)=0
\]
and the complementarity conditions imply
\[
S^c(z^\star)=-\phi_{\mathrm{FB}}(\lambda^*,-g(x^*))=0
\]
through the Lemma \ref{def:FB}. Therefore
\[
S(z^\star)=0.
\]

The two implications establish the equivalence.
\subsection{Proof of Theorem~\ref{thm:inertia_characterisation}} \label{appendix3}
At the KKT point $z^\star=(x^\star,\lambda^\star,\mu^\star),$ the residual mapping is
\[
S(z)
=
\begin{bmatrix}
S^x(z)\\
S^c(z)\\
S^h(z)
\end{bmatrix}
=
\begin{bmatrix}
\nabla_x\mathcal L(x,\lambda,\mu)\\
-\phi_{\mathrm{FB}}(\lambda,-g(x))\\
h(x)
\end{bmatrix}.
\]

We first compute the block derivatives of \(S\). The stationarity block satisfies
\[
S^x(z)
=
\nabla_x f(x)
+
J_g(x)^\top\lambda
+
J_h(x)^\top\mu .
\]
Hence
\[
\frac{\partial S^x}{\partial x}(z^\star)
=
H^\star,~
\frac{\partial S^x}{\partial \lambda}(z^\star)
=
J_g(x^\star)^\top,~
\frac{\partial S^x}{\partial \mu}(z^\star)
=
J_h(x^\star)^\top,
\]
where
\[
H^\star
=
\nabla_{xx}^{2}\mathcal L(x^\star,\lambda^\star,\mu^\star).
\]

For the complementarity block, define componentwise
\[
S^c_i(x,\lambda_i)
=
-\lambda_i
+
g_i(x)
+
\sqrt{\lambda_i^2+g_i(x)^2+\rho_c}.
\]
Then
\[
\frac{\partial S^c_i}{\partial x}
=
\left(
1+
\frac{g_i(x)}
{\sqrt{\lambda_i^2+g_i(x)^2+\rho_c}}
\right)
\nabla g_i(x)^\top,
\]
and
\[
\frac{\partial S^c_i}{\partial \lambda_i}
=
-1+
\frac{\lambda_i}
{\sqrt{\lambda_i^2+g_i(x)^2+\rho_c}}.
\]
Therefore, in matrix form,
\[
\frac{\partial S^c}{\partial x}(z^\star)
=
D_\star^g J_g(x^\star),~
\frac{\partial S^c}{\partial \lambda}(z^\star)
=
D_\star^\lambda,
~
\frac{\partial S^c}{\partial \mu}(z^\star)
=
0,
\]
with
\[
D_\star^g
=
\operatorname{diag}
\left(
1+
\frac{g_i(x^\star)}
{\sqrt{(\lambda_i^\star)^2+g_i(x^\star)^2+\rho_c}}
\right),
\]
and
\[
D_\star^\lambda
=
\operatorname{diag}
\left(
-1+
\frac{\lambda_i^\star}
{\sqrt{(\lambda_i^\star)^2+g_i(x^\star)^2+\rho_c}}
\right).
\]

The equality-feasibility block is
\[
S^h(z)=h(x),
\]
so
\[
\frac{\partial S^h}{\partial x}(z^\star)
=
J_h(x^\star),
~~
\frac{\partial S^h}{\partial \lambda}(z^\star)
=
0,~~
\frac{\partial S^h}{\partial \mu}(z^\star)
=
0.
\]

Thus, before separating active and inactive inequalities, the residual Jacobian has the block form
\[
J^\star
=
\begin{bmatrix}
H^\star & J_g(x^\star)^\top & J_h(x^\star)^\top\\
D_\star^gJ_g(x^\star) & D_\star^\lambda & 0\\
J_h(x^\star) & 0 & 0
\end{bmatrix}.
\]

We now exploit the KKT complementarity structure. For an inactive inequality constraint,
\[
i\in\mathcal I,
\qquad
g_i(x^\star)<0,
\qquad
\lambda_i^\star=0.
\]
In the non-regularised Fischer--Burmeister limit, or equivalently at the exact complementarity surface, this gives
\[
\frac{\partial S^c_i}{\partial x}(z^\star)=0,
\qquad
\frac{\partial S^c_i}{\partial \lambda_i}(z^\star)=-1.
\]
Hence each inactive inequality contributes an isolated negative diagonal direction.

For a strongly active inequality constraint,
\[
i\in\mathcal A,
\qquad
g_i(x^\star)=0,
\qquad
\lambda_i^\star>0,
\]
the complementarity block contributes
\[
\frac{\partial S^c_i}{\partial x}(z^\star)
=
\nabla g_i(x^\star)^\top,
\qquad
\frac{\partial S^c_i}{\partial \lambda_i}(z^\star)=0.
\]
Thus the active complementarity block contributes the active constraint Jacobian \(J_A(x^\star)\).

After reordering the variables as $z= \begin{bmatrix}
x &
\lambda_I&
\lambda_A&
\mu
\end{bmatrix}^T,$ the Jacobian becomes
\[
J^\star
=
\begin{bmatrix}
H^\star & 0 & J_A(x^\star)^\top & J_h(x^\star)^\top\\
0 & -I_{r_I} & 0 & 0\\
J_A(x^\star) & 0 & 0 & 0\\
J_h(x^\star) & 0 & 0 & 0
\end{bmatrix}.
\]

Introduce the active constraint Jacobian $C
=
\begin{bmatrix}
J_A(x^\star)&
J_h(x^\star)
\end{bmatrix}^T$. Then \(J^\star\) can be written as
\[
J^\star
=
\begin{bmatrix}
H^\star & 0 & C^\top\\
0 & -I_{r_I} & 0\\
C & 0 & 0
\end{bmatrix}.
\]

This matrix is block diagonal with respect to the inactive multiplier block. Therefore,
\[
\operatorname{In}(J^\star)
=
\operatorname{In}(K)
+
(0,r_I,0),
\]
where
\[
K
=
\begin{bmatrix}
H^\star & C^\top\\
C & 0
\end{bmatrix}.
\]

The matrix \(K\) is the classical symmetric saddle-point matrix associated with the active equality-constrained second-order system. By the saddle-point inertia theorem (see Lemma \ref{lem:kkt_inertia}),
\[
\operatorname{In}(K)
=
(n,r_A+q,0)
\]
if and only if
\[
\operatorname{rank}(C)=r_A+q,
\]
and
\[
Z^\top H^\star Z\succ0,
\qquad
CZ=0.
\]
The first condition is precisely the Linear Independence Constraint Qualification, since it states that the active inequality gradients and the equality constraint gradients are linearly independent. The second condition is precisely the second-order sufficient condition, since it states that the Lagrangian Hessian is positive definite on the tangent space of the active constraints. Therefore, if LICQ and SOSC hold, then
\[
\operatorname{In}(K)
=
(n,r_A+q,0).
\]
Consequently,
\[
\operatorname{In}(J^\star)
=
(n,r_A+q,0)
+
(0,r_I,0).
\]
Since $m=r_A+r_I,$ we obtain
\begin{equation}\label{ini}
    \operatorname{In}(J^\star)
=
(n,m+q,0).
\end{equation}

Conversely, suppose that \eqref{ini}.  The inactive block \(-I_{r_I}\) contributes exactly \(r_I\) negative eigenvalues. Removing this block yields
\[
\operatorname{In}(K)
=
(n,r_A+q,0).
\]
Applying the saddle-point inertia theorem again gives
\[
\operatorname{rank}(C)=r_A+q,~~Z^\top H^\star Z\succ0.
\]
Hence LICQ and SOSC hold. Therefore,
\[
\operatorname{In}(J^\star)=(n,m+q,0)
\]
if and only if the KKT point satisfies LICQ and SOSC. This proves that the residual Jacobian has the required saddle nature exactly at regular local minimisers. In particular, the KKT point is nondegenerate, locally isolated, and satisfies the second-order sufficient optimality condition.
\ifCLASSOPTIONcaptionsoff
  \newpage
\fi



%

\printbibliography[]

@article{kktcon,
  title={On enhanced KKT optimality conditions for smooth nonlinear optimization},
  author={Andreani, Roberto and Schuverdt, Maria L and Secchin, Leonardo D},
  journal={SIAM Journal on Optimization},
  volume={34},
  number={2},
  pages={1515--1539},
  year={2024},
  publisher={SIAM}
}

@article{conco,
  title={Non-convex optimization problems with linear KKT subsystem},
  author={Syed, Mujahid N},
  journal={Journal of Global Optimization},
  volume={94},
  number={2},
  pages={459--480},
  year={2026},
  publisher={Springer}
}

@article{ref2,
  title={On enhanced KKT optimality conditions for smooth nonlinear optimization},
  author={Andreani, Roberto and Schuverdt, Maria L and Secchin, Leonardo D},
  journal={SIAM Journal on Optimization},
  volume={34},
  number={2},
  pages={1515--1539},
  year={2024},
  publisher={SIAM}
}

@book{jahn2007introduction,
  title={Introduction to the theory of nonlinear optimization},
  author={Jahn, Johannes},
  year={2007},
  publisher={Springer}
}

@incollection{andrei2022sequential,
  title={Sequential quadratic programming},
  author={Andrei, Neculai},
  booktitle={Modern Numerical Nonlinear Optimization},
  pages={521--567},
  year={2022},
  publisher={Springer}
}

@article{de2010exact,
  title={Exact penalties for variational inequalities with applications to nonlinear complementarity problems},
  author={de Andr{\'e}, Thiago A and Silva, Paulo JS},
  journal={Computational Optimization and Applications},
  volume={47},
  number={3},
  pages={401--429},
  year={2010},
  publisher={Springer}
}

@incollection{jiang1998global,
  title={Global and local superlinear convergence analysis of Newton-type methods for semismooth equations with smooth least squares},
  author={Jiang, Houyuan and Ralph, Daniel},
  booktitle={Reformulation: nonsmooth, piecewise smooth, semismooth and smoothing methods},
  pages={181--209},
  year={1998},
  publisher={Springer}
}

@article{wright1998ill,
  title={Ill-conditioning and computational error in interior methods for nonlinear programming},
  author={Wright, Margaret H},
  journal={SIAM Journal on Optimization},
  volume={9},
  number={1},
  pages={84--111},
  year={1998},
  publisher={SIAM}
}

@incollection{murray1997some,
  title={Some aspects of sequential quadratic programming methods},
  author={Murray, Walter},
  booktitle={Large-Scale Optimization with Applications: Part II: Optimal Design and Control},
  pages={21--35},
  year={1997},
  publisher={Springer}
}

@article{shen2016stabilized,
  title={A stabilized filter SQP algorithm for nonlinear programming},
  author={Shen, Chungen and Zhang, Lei-Hong and Liu, Wei},
  journal={Journal of Global Optimization},
  volume={65},
  number={4},
  pages={677--708},
  year={2016},
  publisher={Springer}
}

@article{solodov2004sequential,
  title={On the sequential quadratically constrained quadratic programming methods},
  author={Solodov, Mikhail V},
  journal={Mathematics of Operations Research},
  volume={29},
  number={1},
  pages={64--79},
  year={2004},
  publisher={INFORMS}
}

@article{birgin2025global,
  title={On the Global Convergence of a General Class of Augmented Lagrangian Methods: EG Birgin et al.},
  author={Birgin, Ernesto G and Haeser, Gabriel and Maculan, Nelson and Ramirez, Lennin Mallma},
  journal={Journal of Optimization Theory and Applications},
  volume={206},
  number={3},
  pages={57},
  year={2025},
  publisher={Springer}
}

@article{chen2016penalty,
  title={Penalty methods for a class of non-Lipschitz optimization problems},
  author={Chen, Xiaojun and Lu, Zhaosong and Pong, Ting Kei},
  journal={SIAM Journal on Optimization},
  volume={26},
  number={3},
  pages={1465--1492},
  year={2016},
  publisher={SIAM}
}

@article{stegink2018convergence,
  title={Convergence of projected primal-dual dynamics with applications in data centers},
  author={Stegink, Tjerk W and Van Damme, Tobias and De Persis, Claudio},
  journal={IFAC-PapersOnLine},
  volume={51},
  number={23},
  pages={88--93},
  year={2018},
  publisher={Elsevier}
}

@article{zhu2018projected,
  title={Projected primal--dual dynamics for distributed constrained nonsmooth convex optimization},
  author={Zhu, Yanan and Yu, Wenwu and Wen, Guanghui and Chen, Guanrong},
  journal={IEEE transactions on cybernetics},
  volume={50},
  number={4},
  pages={1776--1782},
  year={2018},
  publisher={IEEE}
}

@article{fernandez2012local,
  title={Local convergence of exact and inexact augmented Lagrangian methods under the second-order sufficient optimality condition},
  author={Fern{\'a}ndez, Dami{\'a}n and Solodov, Mikhail V},
  journal={SIAM Journal on Optimization},
  volume={22},
  number={2},
  pages={384--407},
  year={2012},
  publisher={SIAM}
}

@article{kien2018second,
  title={Second-order KKT optimality conditions for multiobjective optimal control problems},
  author={Kien, Bui Trong and Tuyen, Nguyen Van and Yao, Jen-Chih},
  journal={SIAM Journal on Control and Optimization},
  volume={56},
  number={6},
  pages={4069--4097},
  year={2018},
  publisher={SIAM}
}

@article{meng2015first,
  title={First-and second-order necessary conditions via exact penalty functions},
  author={Meng, Kaiwen and Yang, Xiaoqi},
  journal={Journal of Optimization Theory and Applications},
  volume={165},
  number={3},
  pages={720--752},
  year={2015},
  publisher={Springer}
}

@article{voloshinov2018generalization,
  title={A generalization of the Karush--Kuhn--Tucker theorem for approximate solutions of mathematical programming problems based on quadratic approximation},
  author={Voloshinov, Vladimir Vladimirovich},
  journal={Computational Mathematics and Mathematical Physics},
  volume={58},
  number={3},
  pages={364--377},
  year={2018},
  publisher={Springer}
}

@article{syed2026non,
  title={Non-convex optimization problems with linear KKT subsystem},
  author={Syed, Mujahid N},
  journal={Journal of Global Optimization},
  volume={94},
  number={2},
  pages={459--480},
  year={2026},
  publisher={Springer}
}

@article{ref3,
  title={Regularized interior point methods for constrained optimization and control},
  author={De Marchi, Alberto},
  journal={IFAC-PapersOnLine},
  volume={56},
  number={2},
  pages={1247--1252},
  year={2023},
  publisher={Elsevier}
}

@article{ref9,
  title={A penalized Fischer-Burmeister NCP-function},
  author={Chen, Bintong and Chen, Xiaojun and Kanzow, Christian},
  journal={Mathematical Programming},
  volume={88},
  number={1},
  pages={211--216},
  year={2000},
  publisher={Springer}
}

@book{ref11,
  title={Numerical optimization},
  author={Nocedal, Jorge and Wright, Stephen J},
  year={2006},
  publisher={Springer}
}

@article{ref10,
  title={Inertia and rank characterizations of some matrix expressions},
  author={Chu, Delin and Hung, YS and Woerdeman, Hugo J},
  journal={SIAM Journal on Matrix Analysis and Applications},
  volume={31},
  number={3},
  pages={1187--1226},
  year={2010},
  publisher={SIAM}
}

@incollection{ref8,
  title={Sequential quadratic programming methods},
  author={Gill, Philip E and Wong, Elizabeth},
  booktitle={Mixed integer nonlinear programming},
  pages={147--224},
  year={2011},
  publisher={Springer}
}

@article{ref6,
  title={The Augmented Lagrangian Methods: Overview and Recent Advances},
  author={Deng, Kangkang and Wang, Rui and Zhu, Zhenyuan and Zhang, Junyu and Wen, Zaiwen},
  journal={arXiv preprint arXiv:2510.16827},
  year={2025}
}

@article{ref5,
  title={Control-barrier-function-based design of gradient flows for constrained nonlinear programming},
  author={Allibhoy, Ahmed and Cort{\'e}s, Jorge},
  journal={IEEE Transactions on Automatic Control},
  volume={69},
  number={6},
  pages={3499--3514},
  year={2023},
  publisher={IEEE}
}

@article{ref4,
  title={Worst-case iteration bounds for log barrier methods on problems with nonconvex constraints},
  author={Hinder, Oliver and Ye, Yinyu},
  journal={Mathematics of Operations Research},
  volume={49},
  number={4},
  pages={2402--2424},
  year={2024},
  publisher={INFORMS}
}

@article{k,
  title={Efficient implicit constraint handling approaches for constrained optimization problems},
  author={Rahimi, Iman and Gandomi, Amir H and Nikoo, Mohammad Reza and Mousavi, Mohsen and Chen, Fang},
  journal={Scientific Reports},
  volume={14},
  number={1},
  pages={4816},
  year={2024},
  publisher={Nature Publishing Group UK London}
}

@article{ref1,
  title={Complexity of an inexact proximal-point penalty method for constrained smooth non-convex optimization},
  author={Lin, Qihang and Ma, Runchao and Xu, Yangyang},
  journal={Computational optimization and applications},
  volume={82},
  number={1},
  pages={175--224},
  year={2022},
  publisher={Springer}
}

@article{kkt,
  title={First-order methods for nonsmooth nonconvex functional constrained optimization with or without slater points},
  author={Jia, Zhichao and Grimmer, Benjamin},
  journal={SIAM Journal on Optimization},
  volume={35},
  number={2},
  pages={1300--1329},
  year={2025},
  publisher={SIAM}
}

@article{primal2,
  title={A Second-Order Primal--Dual Dynamics for Set Constrained Distributed Optimization Problems},
  author={Tao, Meng and Guo, Luyao and Cao, Jinde and Rutkowski, Leszek},
  journal={IEEE Transactions on Circuits and Systems II: Express Briefs},
  volume={71},
  number={3},
  pages={1316--1320},
  year={2023},
  publisher={IEEE}
}

@article{primal,
  title={Accelerated primal-dual projection neurodynamic approach with time scaling for linear and set constrained convex optimization problems},
  author={Zhao, You and He, Xing and Zhou, Mingliang and Huang, Tingwen},
  journal={IEEE/CAA Journal of Automatica Sinica},
  volume={11},
  number={6},
  pages={1485--1498},
  year={2024},
  publisher={IEEE}
}

@book{realloca,
  title={Modern numerical nonlinear optimization},
  author={Andrei, Neculai},
  volume={195},
  year={2022},
  publisher={Springer}
}

@article{cntr,
  title={Nonlinear programming solvers for unconstrained and constrained optimization problems: a benchmark analysis},
  author={Lavezzi, Giovanni and Guye, Kidus and Ciarci{\`a}, Marco},
  journal={arXiv preprint arXiv:2204.05297},
  year={2022}
}

@article{ML,
  title={Constrained optimization from a control perspective via feedback linearization},
  author={Zhang, Runyu and Raghunathan, Arvind and Shamma, Jeff and Li, Na},
  journal={Advances in Neural Information Processing Systems},
  volume={38},
  pages={55944--55980},
  year={2026}
}

@article{robotics,
  title={Multi-robot formation control and object transport in dynamic environments via constrained optimization},
  author={Alonso-Mora, Javier and Baker, Stuart and Rus, Daniela},
  journal={The International Journal of Robotics Research},
  volume={36},
  number={9},
  pages={1000--1021},
  year={2017},
  publisher={SAGE Publications Sage UK: London, England}
}




\balance


\end{document}